\newtheorem{theorem}{Theorem}[section]
\newtheorem{lemma}[theorem]{Lemma}
\newtheorem{corollary}[theorem]{Corollary}
\newtheorem{proposition}[theorem]{Proposition}
\newtheorem{question}[theorem]{Question}
\theoremstyle{definition}
\newtheorem{definition}[theorem]{Definition}
\newtheorem{example}[theorem]{Example}
\newtheorem{remark}[theorem]{Remark}
\numberwithin{equation}{section}
\newcommand{\notion}[1]{{\bf  \textit{#1}}}
\newcommand{\mycmdA}{\textnormal{Aut}(G)}
\begin{document}

\begin{frontmatter}

\title{Heavy Bernoulli-percolation clusters are indistinguishable}
\runtitle{Indistinguishability of Heavy Clusters}

\author{\fnms{Pengfei} \snm{Tang}\corref{}\ead[label=e1]{tangp@iu.edu}}
 \address{Department of Mathematics\\ Indiana University\\Bloomington, Indiana 47405-5701\\ USA\\ \printead{e1}}
\affiliation{Indiana University}


\runauthor{P. Tang}

\begin{abstract}
	We prove that  the heavy clusters  are indistinguishable  for  Bernoulli percolation on quasi-transitive nonunimodular graphs.  As an application, we show that the uniqueness threshold of any quasi-transitive graph is also the threshold for connectivity decay. This resolves a question of Lyons and Schramm (1999) in the Bernoulli percolation case and confirms a conjecture of Schonmann (2001). We also prove that every infinite cluster of Bernoulli percolation on a nonamenable quasi-transitive graph is transient almost surely. 
\end{abstract}
\begin{keyword}[class=MSC]
\kwd[Primary ]{60K35}
\kwd{82B43}
\kwd[; secondary ]{60B99}
\kwd{60C05}
\end{keyword}

\begin{keyword}
\kwd{Bernoulli percolation, unimodularity, indistinguishability, heavy clusters, quasi-transitive graph.}
\end{keyword}

\end{frontmatter}

\section{Introduction}
Let $G=(V,E)$ be a connected, locally finite, quasi-transitive infinite graph and for simplicity we will just say “let $G$ be a quasi-transitive graph” hereafter. We allow  multiple edges and loops in $G$ but we will always assume $G$ is locally finite. Fix some parameter $p\in[0,1]$, consider Bernoulli($p$) percolation process on $G$.
The critical probability is defined as
\[
p_c=p_c(G):=\inf\{p\in[0,1]: \textnormal{a.s.\ there exists an infinite open cluster }   \}.
\]
Grimmett and Newman  \cite{GN1990} gave an example showing that there are some $p\in(0,1)$ such that for Bernoulli($p$) percolation on $T\times\mathbb{Z}$, a.s.\ there exist infinite many infinite clusters, where $T$ is a regular tree with high degree. Later Benjamini and Schramm \cite{BS1996} conjectured that if $G$ is a quasi-transitive nonamenable graph, then $p_c<p_u$, where the uniqueness threshold $p_u$ is defined as follows:
\[
p_u=p_u(G):=\inf\{p\in[0,1]:\textnormal{a.s.\ there exists a unique infinite open cluster }   \}.
\]

If $G$ is a quasi-transitive amenable graph, then there is at most one infinite cluster for Bernoulli percolation on $G$;
see \cite{BK1989} and \cite{GKN1992} for more details.  

Recently, for all quasi-transitive graphs whose automorphism group has a quasi-transitive nonunimodular  subgroup, the above conjecture has been proved by  Hutchcroft  \cite{Tom2017}.  The  conjecture has also been shown to hold for many  nonamenable unimodular graphs of special types. For details see the discussion in   \cite{Tom2017} and references therein.

Historically many properties for percolation processes on transitive graphs are first proved in the unimodular case \cite{BLPS1999b,HP1999,Timar2006a} while the nonunimodular case are proved later \cite{Tom2016,S1999} or remain open. The reason is that the mass transport principle \cite{BLPS1999} is a very  powerful technique in the unimodular case. One interesting fact about Hutchcroft's result in \cite{Tom2017}  is that he proved the above conjecture for  nonunimodular case first while general unimodular case remains open. The present paper also mainly focuses on nonunimodular quasi-transitive graphs.

If $G$ is a quasi-transitive graph with $p_c<p_u$ and $p\in (p_c,p_u)$, then Bernoulli($p$) percolation has infinitely many infinite open clusters. Are these infinite open clusters similar or different? Lyons and Schramm  \cite{LS1999a} showed for every graph $G$ with a transitive unimodular closed automorphism group $\Gamma\subset \mycmdA$, every $\Gamma$-invariant, insertion-tolerant percolation process on $G$ has indistinguishable infinite clusters. 

Suppose $G$ is a quasi-transitive graph and suppose $\Gamma\subset\textnormal{Aut}(G)$ is closed, nonunimodular and acts quasi-transitively on $G$. Let $m$ denote the Haar measure on $\Gamma$ ($m$ is unique up to a multiplicative constant). In particular, let $m(x):=m(\Gamma_x)$, where $\Gamma_x:=\{\gamma\in\Gamma:\gamma x=x \}$ denotes the stabilizer of $x\in V$. Then there are two types of infinite clusters: for an infinite cluster $C$, if $\sum_{x\in C}{m(x)}<\infty$, it is called a ($\Gamma$-)light cluster; otherwise it is called a ($\Gamma$-)heavy cluster. 
For the  nonunimodular case, Lyons and Schramm \cite{LS1999a} also pointed out that light clusters can be distinguished by $\Gamma$-invariant properties and they asked whether heavy clusters are indistinguishable (Question 3.17 there).

Here we give a positive answer and the exact definitions of indistinguishability  and $\Gamma$-invariant properties are given later in Definition \ref{def: indistinguishable infinite clusters} and Definition \ref{def: invariant property}. 
\begin{theorem}\label{thm:heavy clusters are indis}
	Suppose $G$ is a locally finite, connected infinite graph, and suppose that $\textnormal{Aut}(G)$ has  a closed, quasi-transitive and nonunimodular subgroup $\Gamma$. If there are infinite many $\Gamma$-heavy clusters a.s. for Bernoulli$(p)$ percolation on $G$, then they are  indistinguishable by $\Gamma$-invariant properties.  
\end{theorem}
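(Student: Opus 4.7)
The plan is to follow the framework of Lyons and Schramm's indistinguishability theorem with two essential modifications tailored to the nonunimodular setting: the standard mass transport principle is replaced by its tilted (nonunimodular) version weighted by $m(\cdot)$, and the role played by a positive lower density of cluster vertices in the unimodular proof is taken over by the heaviness hypothesis $\sum_{x\in C} m(x) = \infty$. I would argue by contradiction: assume there is a $\Gamma$-invariant cluster property $\mathcal{A}$ such that, for some orbit representative $o$, both events ``$o$ lies in a heavy $\mathcal{A}$-cluster'' and ``$o$ lies in a heavy non-$\mathcal{A}$-cluster'' have positive probability.

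For each vertex $x$ and radius $r$, introduce the conditional probability $p_r(x)$ that the cluster of $x$ belongs to $\mathcal{A}$, given the edge configuration outside $B(x, r)$ together with the cluster structure inside $B(x, r)$. Lévy's martingale convergence theorem gives $p_r(x) \to \mathbf{1}_{\mathcal{A}}$ almost surely on the event that $x$ lies in an infinite cluster, so for sufficiently large $r$ the percolation configuration contains, with positive probability, a ``locally $\mathcal{A}$-robust'' vertex (with $p_r > 1 - \varepsilon$) in a heavy $\mathcal{A}$-cluster and a locally non-$\mathcal{A}$-robust vertex in a heavy non-$\mathcal{A}$-cluster. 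The contradiction then comes from insertion tolerance: by toggling finitely many edges in a controlled region one merges the two clusters, producing a single cluster that simultaneously contains both types of locally-robust vertices---impossible if $\mathcal{A}$ is a genuine cluster property.

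To turn this heuristic into a proof I would set up a diagonally $\Gamma$-invariant transport $f(u, v)$ that sends mass $m(u)$ from each locally-$\mathcal{A}$-robust vertex $u$ to the nearest locally-non-$\mathcal{A}$-robust vertex $v$ in its cluster after the insertion-tolerance modification, and apply the tilted mass transport principle
\[
m(o)\sum_{v} \mathbb{E}[f(o, v)] = \sum_{v} m(v)\, \mathbb{E}[f(v, o)],
\]
exploiting that heaviness of $C$ forces a positive $m$-weighted lower density of locally-$\mathcal{A}$-robust vertices, while the nearest-neighbor rule localizes the flow so as to make one side finite and the other infinite.

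The main obstacle is the interplay between the unboundedness of $m$ along $\Gamma$-orbits and the Lévy/insertion-tolerance approximation. Because $m(x)$ is unbounded when $\Gamma$ is nonunimodular, balls $B(x, r)$ look drastically different at different $m$-scales, and the convergence $p_r(x) \to \mathbf{1}_{\mathcal{A}}$ must be implemented so that the resulting transport stays $m$-weighted summable; this will likely require stratifying vertices by stabilizer size and arguing uniformly on each stratum before passing to a limit. The edge-toggle step demands similar care: one must ensure that the merge of a heavy $\mathcal{A}$-cluster with a heavy non-$\mathcal{A}$-cluster preserves heaviness of both halves, rather than accidentally absorbing a light cluster. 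This is precisely where the heaviness hypothesis is used in an essential way, and where the argument genuinely fails for light clusters---consistent with the fact, recalled in the introduction, that light clusters \emph{are} distinguishable.
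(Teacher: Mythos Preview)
Your proposal is a sketch rather than a proof, and the central mechanism you rely on does not survive the nonunimodular setting. The L\'evy-martingale / local-robustness picture is fine, but the mass-transport step is where the argument breaks. In the tilted MTP one has
\[
\mathbb{E}\Big[\sum_x f(\rho,x)\Big]=\mathbb{E}\Big[\sum_x f(x,\rho)\,\Delta(\rho,x)\Big],
\]
and the weight $\Delta(\rho,x)=m(x)/m(\rho)$ is \emph{unbounded} along the cluster. Sending unit (or $m$-weighted) mass from each locally-$\mathcal{A}$-robust vertex to the nearest locally-non-$\mathcal{A}$-robust vertex does not produce a finite-versus-infinite contradiction: the ``nearest'' target can sit at arbitrarily large or small modular height, so neither side of the tilted identity is controlled. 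The heaviness hypothesis $\sum_{x\in C} m(x)=\infty$ is \emph{not} a density statement---it does not say that robust vertices occupy a positive $m$-fraction of any growing window, and there is no $\Gamma$-invariant notion of ``ball'' along which to measure density when $m$ is unbounded. Your own final paragraph correctly identifies this as the obstacle, but ``stratifying by stabilizer size and arguing uniformly on each stratum'' is exactly what one cannot do: the strata are the level sets of $m$, and a heavy cluster necessarily spreads across infinitely many of them with no uniform control.

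The paper takes a completely different route that sidesteps this difficulty. Instead of mass transport between robust vertices, it runs the Lyons--Schramm \emph{random-walk} argument, replacing delayed simple random walk by a ``square-root biased'' walk with conductances $c(x,y)=\sqrt{m(x)m(y)}$. This walk has two crucial features: (i) its law, coupled with the percolation, is shift-invariant on the $\Gamma$-invariant $\sigma$-field (this is where the tilted MTP enters, via Theorem~\ref{thm:key-extension} and Corollary~\ref{cor:two-sided biased rw}); and (ii) on a heavy cluster it is transient, because by Tim\'ar's result (Proposition~\ref{prop:finite levels}) a heavy cluster contains an infinite subgraph inside a finite union of levels, where the conductances are bounded and transience follows from the unimodular theory. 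Stationarity plus transience then drive the pivotal-edge contradiction exactly as in \cite{LS1999a}. The heaviness hypothesis is used solely to guarantee transience of this specific walk---not as a density input to a transport inequality.
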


Question 3.17 in \cite{LS1999a} was indeed asked for general insertion-tolerant percolation processes. Here we only have a positive answer for the Bernoulli percolation case. The general case is still open. For more discussion on the general case see the last section. \\

Let $d$ be the graph distance on $G$ and $B(x,N):=\{z\in V:d(z,x)\leq N\}$ be the ball centered at $x$ with radius $N$. Let $B(x,N)\leftrightarrow B(y,N)$ denote the event that there is an open path connecting some vertex $u\in B(x,N)$ and some vertex $v\in B(y,N)$. In particular we use $x\leftrightarrow y$ to denote the event that there is an open path connecting the two vertices $x,y$. 
Schonmann \cite{S1999} also proved a criterion of $p_u$ for all quasi-transitive graphs: 
\[
p_u=\inf\{p\in[0,1]: \lim_{N\rightarrow\infty} \inf_{x,y\in V}\mathbf{P}_p(B(x,N)\leftrightarrow B(y,N))=1   \}.
\]  

For unimodular transitive graphs, Lyons and Schramm \cite{LS1999a} gave another characterization of $p_u$: let $G$ be a unimodular transitive graph and $o\in V$ be a fixed vertex. Then
\[
p_u=\inf\{p\in[0,1]: \inf_{x\in V}\mathbf{P}_p(o\leftrightarrow x)>0  \}.
\]

Schonmann then conjectured this is also true for nonunimodular case (Conjecture 3.1 in \cite{S2001}).
Following Schonmann \cite{S2001} we denote the right-hand side of the above equality as follows:
\begin{definition}
	$$\overline{p}_{\textnormal{conn}}:=\inf\{p\in[0,1]: \inf_{x\in V}\mathbf{P}_p(o\leftrightarrow x)>0  \}.$$
\end{definition}
Note the right hand side does not depend on the choice of $o\in V$ by Harris-FKG inequality. Since $\mathbb{P}_p(o\leftrightarrow x)$ is a nondecreasing function of $p$, we also have 
\[
\overline{p}_{\textnormal{conn}}=\sup\{p\in[0,1]: \inf_{x\in V}\mathbf{P}_p(o\leftrightarrow x)=0  \}.
\]
An application of the above Theorem \ref{thm:heavy clusters are indis} is the following extension of Theorem 4.1 of \cite{LS1999a} in the Bernoulli percolation setting for all quasi-transitive graphs (we use Theorem 1.1 to deal with the nonunimodular case while the unimodular case is already proved in Theorem 4.1 of \cite{LS1999a}), and this also  confirms Schonmann's Conjecture 3.1 in \cite{S2001}.
\begin{theorem}\label{thm:connectivity decay}
	Suppose $G=(V,E)$ is a quasi-transitive graph and $\mathbf{P}_p$ is a Bernoulli bond percolation on $G$. If $\mathbf{P}_p$ has more than one infinite cluster a.s., then connectivity decays, that is
	\[
	\inf\{\ \mathbf{P}_p(x\leftrightarrow y): x,y\in V \}=0.
	\]
	In particular, 
	\[
	p_u(G)=\overline{p}_{\textnormal{conn}}.
	\]
\end{theorem}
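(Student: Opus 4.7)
The plan is to prove the connectivity-decay statement directly; the identity $p_u = \overline{p}_{\textnormal{conn}}$ is then immediate from the definitions. The easy inequality $\overline{p}_{\textnormal{conn}} \leq p_u$ comes from FKG: for $p > p_u$ the infinite cluster $C_\infty$ is unique almost surely, quasi-transitivity gives a uniform lower bound $\theta := \min_{x \in V} \mathbf{P}_p(x \in C_\infty) > 0$ across the finitely many $\textnormal{Aut}(G)$-orbits, and FKG applied to the two increasing events $\{x \in C_\infty\}$ and $\{y \in C_\infty\}$ yields $\mathbf{P}_p(x \leftrightarrow y) \geq \theta^2$. The content of the theorem is therefore the reverse direction, i.e.\ connectivity decay whenever $\mathbf{P}_p$ has more than one infinite cluster a.s.

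For this I would split on the structure of $\textnormal{Aut}(G)$. If $\textnormal{Aut}(G)$ admits a closed, quasi-transitive, \emph{unimodular} subgroup, then Theorem 4.1 of \cite{LS1999a}---originally for the transitive unimodular case, but extending routinely to quasi-transitive unimodular graphs---gives the conclusion and does not use Theorem \ref{thm:heavy clusters are indis}. Otherwise, every closed quasi-transitive subgroup is nonunimodular; fix such a $\Gamma$ with tilted Haar weight $m(\cdot)$. By a $\Gamma$-ergodicity argument, almost surely either every infinite cluster is $\Gamma$-light, or at least one is $\Gamma$-heavy; in the latter case the $\{1,\infty\}$ dichotomy combined with the hypothesis of multiple infinite clusters forces infinitely many heavy clusters almost surely.

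In the all-light subcase I would exploit the defining inequality $m(C_o) < \infty$ directly: on the event $\{o \leftrightarrow y\}$ we have $m(y) \leq m(C_o)$, so $\mathbf{P}_p(o \leftrightarrow y) \leq \mathbf{P}_p(m(C_o) \geq m(y)) \to 0$ as $m(y) \to \infty$, and nonunimodularity guarantees vertices $y$ with arbitrarily large $m(y)$. In the heavy subcase I would argue by contradiction: if $\inf_{x,y}\mathbf{P}_p(x \leftrightarrow y) \geq c > 0$, then averaging $\mathbf{P}_p(o \leftrightarrow y)$ with respect to $m$ shows that $C_o$ has positive $m$-density with probability bounded below by some $c' > 0$; the $\Gamma$-invariant property ``the cluster has $m$-density at least $c'$'' must then, by Theorem \ref{thm:heavy clusters are indis}, hold for every heavy cluster simultaneously, but summing $m$-densities over disjoint heavy clusters cannot exceed $1$, capping the number of heavy clusters and contradicting their infinitude.

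The main obstacle is formalizing the density-based $\Gamma$-invariant property used in the heavy subcase. The challenge is converting the non-$\Gamma$-invariant inequality $\mathbf{P}_p(o \leftrightarrow y) \geq c$, which is tied to the reference vertex $o$, into a genuinely $\Gamma$-invariant statement about a random cluster to which Theorem \ref{thm:heavy clusters are indis} can be applied. I expect this to require the nonunimodular mass-transport principle with vertices weighted by $m$, together with a careful accounting argument ensuring that finite clusters and the coexisting light clusters contribute negligibly to the total $m$-density budget, so that the counting contradiction goes through cleanly.
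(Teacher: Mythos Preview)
Your unimodular case and your all-light nonunimodular case are fine and match the paper's argument (your light-case formulation via $\mathbf{P}_p(m(C_o)\geq m(y))\to 0$ is in fact a bit cleaner than the paper's version, which phrases the same idea as $\mathbf{P}[\bigcap_n\{o\leftrightarrow L_n\}]>0$).

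The heavy subcase, however, has a genuine gap that you yourself flag: you do not have a $\Gamma$-invariant ``$m$-density''. Any density computed relative to balls around a fixed $o$ is not $\Gamma$-invariant, so Theorem~\ref{thm:heavy clusters are indis} cannot be applied to it. Worse, in the nonunimodular setting the $m$-mass of $B(o,R)$ is overwhelmingly concentrated on the highest level the ball reaches, so an $m$-density of a cluster in balls is governed by whether the cluster touches that single level and will typically oscillate rather than converge; there is no evident TMTP trick that repairs this, because TMTP relates expectations and you need an almost-sure, cluster-wise quantity.

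The paper resolves exactly this obstacle by replacing $m$-density with the \emph{frequency of visits by simple random walk on the ambient graph $G$}: for each cluster $C$ one sets $\alpha(C)=\lim_n n^{-1}\sum_{k=1}^{n}\mathbf{1}_{\{w(k)\in C\}}$, where $w$ is SRW on $G$ from a random root. Lemma~\ref{lem:freq} (proved via the two-sided stationarity of Corollary~\ref{cor:two-sided srw}) shows this limit exists a.s.\ and equals a deterministic, $\Gamma$-invariant function $f(C)$ of the cluster. This $f$ satisfies the three properties your outline needs: it is a bona fide $\Gamma$-invariant cluster property, the frequencies of disjoint clusters sum to at most $1$, and $\inf_y\mathbf{P}_p(o\leftrightarrow y)>0$ forces $f(C_o)>0$ with positive probability. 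From there the contradiction runs exactly as you sketched, using Theorem~\ref{thm:heavy clusters are indis} to make $f(C)$ constant across all heavy clusters. So your structural plan is right; the missing idea is that the correct $\Gamma$-invariant ``density'' is random-walk frequency, not an $m$-weighted spatial density.
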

\begin{remark}
	The above theorems also hold for  Bernoulli site percolation by similar arguments.  
\end{remark}

The proofs of the above two theorems follow similar strategies as the ones of unimodular transitive case. The main new ingredient is that we introduce certain ``biased"  random walks to replace the role of simple random walk in the study of properties of Bernoulli percolation clusters on nonunimodular quasi-transitive graphs. Mass transport principle in its general form also plays a key role in the proof of the stationarity of the ``biased" random walks. \\

Grimmett, Kesten and Zhang \cite{GKZ1993} first proved that the infinite cluster for supercritical Bernoulli percolation on  $\mathbb{Z}^d$ with $d\geq 3$ is transient for simple random walk. In \cite{LS1999a} Lyons and Schramm showed that if $G$ is a locally finite, connected infinite  graph with a transitive unimodular closed automorphism group $\Gamma\subset\textnormal{Aut}(G)$, and $(\mathbf{P},\omega)$ is a $\Gamma$-invariant insertion-tolerant percolation process on $G$ that has almost surely infinitely many infinite clusters, then a.s. each infinite cluster is transient (Proposition 3.11 of \cite{LS1999a}). Benjamini, Lyons and Schramm conjectured that if $G$ is a transient Cayley graph, then a.s. every infinite cluster of Bernoulli percolation on $G$ is transient (Conjecture 1.7 in \cite{BLS1999}). One may even conjecture that the same conclusion is true for all transient quasi-transitive graphs. Here we give a positive answer for the nonamenable quasi-transitive case. The case for general amenable quasi-transitive graphs remains open. 

\begin{theorem}\label{thm:transience of supercritical infinite clusters}
	Suppose $G$ is a quasi-transitive nonamenable graph. Then  a.s. every infinite cluster in Bernoulli percolation is transient for simple random walk.
\end{theorem}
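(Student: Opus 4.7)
The plan is to condition on the a.s.\ number of infinite clusters, which by the Newman--Schulman theorem is $0$, $1$, or $\infty$; the zero case is trivial.

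For the regime with a unique infinite cluster (in particular when $p\geq p_u$), I would invoke the known fact that on a nonamenable quasi-transitive graph the unique infinite cluster of Bernoulli percolation has positive anchored expansion, and combine it with the criterion that bounded-degree graphs with positive anchored expansion are transient; this gives transience of the unique infinite cluster immediately.

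For the regime with infinitely many infinite clusters, I would split by unimodularity. If some closed quasi-transitive subgroup of $\textnormal{Aut}(G)$ is unimodular, I extend Proposition 3.11 of \cite{LS1999a} from the transitive to the quasi-transitive setting: Lyons--Schramm indistinguishability reduces the problem to exhibiting one a.s.\ transient cluster, and insertion-tolerance combined with a mass-transport-plus-simple-random-walk argument produces a unit flow of finite energy on such a cluster. Otherwise $\textnormal{Aut}(G)$ has a closed quasi-transitive nonunimodular subgroup $\Gamma$, and I would invoke Theorem~\ref{thm:heavy clusters are indis}: heavy clusters are indistinguishable by $\Gamma$-invariant properties, so it suffices to show that a single heavy cluster is transient. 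I would do this by running the biased random walk weighted by $m(x)$ (the one introduced in this paper) on a heavy cluster, using the general mass-transport principle to establish stationarity, and extracting a finite-energy unit flow as in the unimodular argument. Light clusters (finite $m$-mass) have to be handled separately: on a nonamenable nonunimodular graph they are essentially tree-like, escaping in the nonunimodular direction, and a direct flow construction on this tree-like geometry gives transience.

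The main obstacle I anticipate is the nonunimodular heavy-cluster case: the biased random walk must admit a tractable stationary measure obtained via the general mass-transport principle, and the Lyons--Schramm random-walk-to-flow conversion must survive the biasing before indistinguishability can be leveraged. A secondary difficulty is the treatment of light clusters, where indistinguishability is unavailable and one must instead exploit their specific tree-like geometry on nonunimodular graphs to build flows of finite energy by hand.
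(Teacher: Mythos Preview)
Your decomposition by number of clusters and your plan to invoke Theorem~\ref{thm:heavy clusters are indis} are both misdirected compared to what the paper does, and the unique-cluster step contains a genuine gap.

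The anchored-expansion route for the unique infinite cluster is not available in the generality you need: positive anchored expansion of Bernoulli clusters on nonamenable transitive graphs is known only for $p$ sufficiently close to $1$ (this is the Chen--Peres result the paper cites), not for arbitrary $p\geq p_u$. So your reduction in that regime does not go through as stated. The paper avoids this issue entirely by a different reduction: in the nonunimodular case it uses $p_c<p_h$ (Hutchcroft) together with the H\"aggstr\"om--Peres--Schonmann monotone coupling, which guarantees that for any $p>p_c$ every infinite $p$-cluster contains an infinite $p_1$-cluster for $p_1\in(p_c,p_h)$. Rayleigh monotonicity then reduces everything---including the unique-cluster regime---to proving transience of \emph{light} clusters for small supercritical $p$.

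Your nonunimodular plan also overshoots. The paper never uses Theorem~\ref{thm:heavy clusters are indis} here; transience is a \emph{robust} $\Gamma$-invariant property, so the much softer Theorem~\ref{thm:robust} (H\"aggstr\"om--Peres--Schonmann) already upgrades ``some infinite cluster is transient with positive probability'' to ``all are, almost surely''. There is no need for the biased-walk stationarity machinery or for indistinguishability of heavy clusters in this argument. What you identify as a ``secondary difficulty''---light clusters---is in fact the entire technical content: the paper embeds a supercritical branching-type tree inside the light cluster (a quasi-transitive adaptation of Tim\'ar's construction, using a strengthened branching lemma showing the tree is transient, not merely infinite, with positive probability). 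Your sketch ``tree-like geometry, build flows by hand'' is in the right direction but underestimates the work: one needs quantitative control on how many vertices the cluster meets in each successive separating layer (Lemma~\ref{lem: intersection tend to infinity}) and a careful bounded-dependence argument to feed into the branching criterion.
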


Benjamini, Lyons and Schramm  proved a stronger result (Theorem 1.3 in \cite{BLS1999}) for nonamenable Cayley graphs, namely for Bernoulli percolation on such graphs, simple random walk on the infinite clusters has positive speed a.s. Their result can be easily generalized to hold for every quasi-transitive unimodular nonamenable graph. So to prove Theorem \ref{thm:transience of supercritical infinite clusters}, it suffices to deal with the nonunimodular case; see Proposition \ref{prop: transience for bernoulli percolation clusters}.

The paper is organized as follows: in Section 2 we introduce some preliminary results and notations. In Section 3 we review the tilted mass transport principle introduced in \cite{Tom2017}. Some properties of heavy clusters are discussed in Section 4, where their proofs also illustrate the applications of the tilted mass transport principle. We introduce the ``biased"  random walks  in Section 5 and the stationary property for them. We also prove Theorem  \ref{thm:transience of supercritical infinite clusters} in Section 5.  We prove Theorem \ref{thm:heavy clusters are indis} and Theorem \ref{thm:connectivity decay} in Section 6. Finally in Section 7 we discuss some examples of nonunimodular transitive graphs and also give some further questions.

\section{Preliminary}

Let $G=(V,E)$ be a locally finite, connected infinite unoriented graph with vertex set $V$ and edge set $E$. If $e=(u,v)\in E$, we say $u,v$ are adjacent and denote by $u\sim v$. An automorphism of $G$ is a bijection from $V=V(G)$ to itself which preserves adjacency. Let $\textnormal{Aut}(G)$ denote the group of all automorphisms of $G$. With the topology of pointwise convergence, $\textnormal{Aut}(G)$ is a locally compact Hausdorff topological group. Suppose $\Gamma\subset \textnormal{Aut}(G)$ is a closed subgroup with the induced topology. For any $v\in V$, the orbit of $v$ under $\Gamma$ is denoted by $\Gamma v:=\{\gamma v:\gamma\in\Gamma\}$. We use $G/\Gamma:=\{\Gamma v:v\in V\}$ to denote the orbit sets and say that $\Gamma$ is \textbf{quasi-transitive} or $\Gamma$ acts on $G$ quasi-transitively if there are only finite many orbits, namely $G/\Gamma$ is a finite set. In particular, if $G/\Gamma$ has a single element, then we say $\Gamma$ is \textbf{transitive} or $\Gamma$ acts on $G$ transitively. 
The graph $G$ is called quasi-transitive (transitive) if $\textnormal{Aut}(G)$ acts on $G$ quasi-transitively (transitively).

An infinite set of vertices $V_0\subset V$ is called end convergent if for any finite set $K\subset V$, there is a connected component of $G\backslash K$ that contains all but finitely many vertices of $V_0$. And two end convergent sets $V_0,V_1$ are said to be equivalent if $V_0\cup V_1$ is also end convergent. Now an end of $G$ is defined to be an equivalence class of end-convergent sets. For example, suppose $G$ is an infinite tree. Fix some $\rho\in V$ and call it the root of $G$. Then each end corresponds bijectively to a ray starting from the root $\rho$. The number of ends of $G$ is equal to the supremum of the number of components of $G\backslash K$ over all finite sets $K\subset V$. For example $\mathbb{Z}$ has two ends and $\mathbb{Z}^d$ has one end for $d\geq2$.   

Recall that on every locally compact Hausdorff topological group $\Gamma$, there is a unique (up to a multiplicative constant) Borel measure $|\cdot|$ that is invariant under left multiplication, namely $|A|=|\gamma A|$ for every Borel set $A\subset \Gamma$ and all $\gamma\in \Gamma$. This measure $m$ is called the (left) Haar measure. If it is also invariant under right multiplication, then $\Gamma$ is called a \textbf{unimodular group}. 
A quasi-transitive graph $G$ is said to be unimodular if its automorphism group is unimodular.

For each $x\in V$, $\Gamma_x:=\{\gamma\in\Gamma:\gamma x=x\}$ is called the stabilizer of $x$.
Let $m(x)=|\Gamma_x|$ denote the Haar measure of the stabilizer of $x$. And $\Gamma_xy:=\{\gamma y:\gamma\in\Gamma_x\}$ is the orbit of $y$ under $\Gamma_x$.

Examples of unimodular graphs include transitive amenable graphs \cite{SW1990} and Cayley graphs. Grandparent graphs \cite{Trofimov1985} and Diestel-Leader graphs \cite{Woess2005} $DL(k,n)$ with $k\neq n$ are typical examples of nonunimodular transitive graphs. More examples of nonunimodular transitive graphs can be found in Tim\'{a}r's paper \cite{Timar2006}. 

Even if a quasi-transitive graph $G$ itself is unimodular, there might exist a subgroup $\Gamma\subset  \textnormal{Aut}(G)$ that is nonunimodular and acts on $G$ quasi-transitively. Based on \cite{SW1990} such graphs must be nonamenable. A well-known example is the regular tree $T_3$ with degree $3$.
The regular tree $T_3$ is a unimodular transitive graph. Fix an end $\xi$ of $T_3$. Let $\Gamma$ be the subgroup of all automorphisms that fix $\xi$. Then $\Gamma$ is nonunimodular and acts on $T_3$ transitively. Other  examples will be discussed in more detail in the last section.

Suppose $\Gamma\subset \textnormal{Aut}(G)$ acts on $G$ quasi-transitively. A simple criterion for unimodularity of $\Gamma$ is provided by the following Proposition.
\begin{proposition}[Trofimov \cite{Trofimov1985}]\label{prop:unicre}
	$\Gamma$ is unimodular if and only if for all $x,y$ in the same orbit under $\Gamma$,  
	\[
	|\Gamma_xy|=|\Gamma_yx|,
	\]
	where $|\Gamma_xy|$ denotes the number of elements in the set $\Gamma_xy$.
\end{proposition}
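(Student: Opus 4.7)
The plan is to connect the combinatorial quantity $|\Gamma_x y|/|\Gamma_y x|$ directly to the modular function $\Delta:\Gamma\to(0,\infty)$ of $\Gamma$, which measures the failure of left-Haar measure to be right-invariant (so $\Gamma$ is unimodular iff $\Delta\equiv 1$). Since $G$ is locally finite and $\Gamma$ is a closed subgroup of $\textnormal{Aut}(G)$ with the topology of pointwise convergence, each stabilizer $\Gamma_x$ is a compact open subgroup, so $0<m(\Gamma_x)<\infty$, and every $\Gamma_x$-orbit $\Gamma_x y$ is finite (it sits inside the finite ball $B(x,d(x,y))$).

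The central identity I would establish first is the orbit-stabilizer formula in Haar-measure form: the stabilizer of $y$ inside $\Gamma_x$ is $\Gamma_x\cap\Gamma_y$, and this subgroup has finite index in $\Gamma_x$, so
\[
|\Gamma_x y|=[\Gamma_x:\Gamma_x\cap\Gamma_y]=\frac{m(\Gamma_x)}{m(\Gamma_x\cap\Gamma_y)},\qquad |\Gamma_y x|=\frac{m(\Gamma_y)}{m(\Gamma_x\cap\Gamma_y)}.
\]
Dividing gives the clean ratio $|\Gamma_x y|/|\Gamma_y x|=m(\Gamma_x)/m(\Gamma_y)$. Next, for $x$ and $y$ in the same orbit I would pick $\gamma\in\Gamma$ with $\gamma x=y$, observe $\Gamma_y=\gamma\Gamma_x\gamma^{-1}$, and use the defining property $m(A\gamma)=\Delta(\gamma)m(A)$ of the modular function to compute
\[
m(\Gamma_y)=m(\gamma\Gamma_x\gamma^{-1})=\Delta(\gamma^{-1})m(\gamma\Gamma_x)=\Delta(\gamma)^{-1}m(\Gamma_x),
\]
using left-invariance in the middle step. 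Combining with the previous display yields the key relation
\[
\frac{|\Gamma_x y|}{|\Gamma_y x|}=\Delta(\gamma)\qquad\text{whenever }\gamma x=y.
\]

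From this both implications are immediate. If $\Gamma$ is unimodular then $\Delta\equiv 1$, forcing $|\Gamma_x y|=|\Gamma_y x|$ for all $x,y$ in the same orbit. Conversely, if the orbit equality holds for all such pairs, then $\Delta(\gamma)=1$ for every $\gamma\in\Gamma$: given any $\gamma$, pick any vertex $x$ and apply the identity to the pair $(x,\gamma x)$, which always lies in a single orbit. Hence $\Delta\equiv 1$ and $\Gamma$ is unimodular.

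I do not anticipate a serious obstacle; the mild care needed is just bookkeeping around conventions, specifically the orientation of the modular function (whether $m(A\gamma)=\Delta(\gamma)m(A)$ or its inverse) and the verification that $\Gamma_x\cap\Gamma_y$ has finite index in $\Gamma_x$, which follows from the finiteness of $\Gamma_x y$ already noted. Once those conventions are fixed, the proof is essentially the two-line computation above together with the orbit-stabilizer identity.
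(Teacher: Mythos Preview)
Your argument is correct and is essentially the standard proof of this criterion. Note, however, that the paper does not supply its own proof of this proposition: it is stated as a known result attributed to Trofimov, with no argument given. The identity $|\Gamma_x y|/|\Gamma_y x|=m(\Gamma_x)/m(\Gamma_y)$ that you derive is exactly what the paper records separately as Lemma~\ref{lem:haar} (again cited without proof), so your write-up in effect proves both statements at once.
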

From this proposition one can easily see that in the above example $T_3$, the subgroup $\Gamma$ fixing a specific end $\xi$ is nonunimodular. 

A well-known result concerning the Haar measure is the following lemma (for a proof, see for example formula (1.28) and Lemma 1.29 in \cite{Woess2000} ):
\begin{lemma}\label{lem:haar}
	Suppose $\Gamma\subset \mycmdA$ acts on $G=(V,E)$ quasi-transitively, then for all $x,y\in V$
	\[
	\frac{m(x)}{m(y)}=\frac{|\Gamma_xy|}{|\Gamma_yx|}=\frac{m(\gamma x)}{m(\gamma y)},\forall \gamma\in \Gamma.
	\]
	
\end{lemma}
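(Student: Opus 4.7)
The plan is to establish both equalities by relating the Haar measures $m(\Gamma_x)$ and $m(\Gamma_y)$ to sizes of orbits under point stabilizers via the standard coset decomposition, and then to deduce the $\Gamma$-invariance of the ratio from the equivariance $\Gamma_{\gamma z} = \gamma \Gamma_z \gamma^{-1}$.

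For the first equality, I would start from the orbit map $\Gamma_x \to \Gamma_x y$, $\alpha \mapsto \alpha y$, whose fibres are exactly the left cosets of the joint stabilizer $\Gamma_x \cap \Gamma_y$ in $\Gamma_x$. Since $G$ is locally finite and $\Gamma$ is a closed subgroup of $\textnormal{Aut}(G)$ in the pointwise-convergence topology, each stabilizer $\Gamma_z$ is both open (as a pointwise stabilizer) and compact (embedding into the product $\prod_n \textnormal{Sym}(S_n(z))$ of finite symmetric groups on spheres around $z$), so $\Gamma_x\cap \Gamma_y$ is an open subgroup of $\Gamma_x$ of finite index $|\Gamma_x y|$. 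Decomposing $\Gamma_x$ into its finitely many left cosets of $\Gamma_x\cap\Gamma_y$ and using left-invariance and finite additivity of $m$ gives
\[
m(\Gamma_x) \;=\; |\Gamma_x y|\cdot m(\Gamma_x\cap \Gamma_y),
\]
and symmetrically $m(\Gamma_y) = |\Gamma_y x|\cdot m(\Gamma_x\cap \Gamma_y)$. Taking the ratio cancels the common factor $m(\Gamma_x\cap \Gamma_y)$ and yields $m(x)/m(y) = |\Gamma_x y|/|\Gamma_y x|$.

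For the second equality, rather than invoking the modular function of $\Gamma$ directly I would bootstrap off the first one. The identity $\Gamma_{\gamma z} = \gamma\Gamma_z\gamma^{-1}$ gives $\Gamma_{\gamma x}(\gamma y) = \gamma(\Gamma_x y)$, so $|\Gamma_{\gamma x}(\gamma y)| = |\Gamma_x y|$ and likewise $|\Gamma_{\gamma y}(\gamma x)| = |\Gamma_y x|$. Applying the first equality to the pair $(\gamma x,\gamma y)$ then produces
\[
\frac{m(\gamma x)}{m(\gamma y)} \;=\; \frac{|\Gamma_{\gamma x}(\gamma y)|}{|\Gamma_{\gamma y}(\gamma x)|} \;=\; \frac{|\Gamma_x y|}{|\Gamma_y x|} \;=\; \frac{m(x)}{m(y)}.
\]
The only place where genuine measure-theoretic input is needed is the finite-index coset calculation in the first step; everything else is purely combinatorial, so this is also the step I would expect to be the main (minor) obstacle if one wanted the proof to be fully self-contained rather than cited from Woess.
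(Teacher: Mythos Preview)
Your proof is correct. The paper does not supply its own argument for this lemma; it simply cites formula~(1.28) and Lemma~1.29 of Woess's book, and the coset-decomposition proof you wrote out is exactly the standard argument found there, so you have essentially reproduced the cited proof rather than deviated from it.
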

A well-known criterion for unimodularity is the following  simple application of Lemma \ref{lem:haar}.
\begin{lemma}\label{lem:unicre}
	Suppose $\Gamma\subset \mycmdA$ acts on $G=(V,E)$ quasi-transitively and $m$ is an associated Haar measure, then $\Gamma$ is unimodular if and only if $\{m(\Gamma_x): x\in V\}$ is a finite set.
\end{lemma}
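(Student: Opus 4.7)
My plan is to deduce both implications by combining Lemma \ref{lem:haar} with the unimodularity criterion of Proposition \ref{prop:unicre}, with the nontrivial direction reduced to iterating the action of a single group element.

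For the ``only if'' direction, I would fix orbit representatives $x_1,\ldots,x_k$, which is possible because $\Gamma$ acts quasi-transitively. For any $x$ in the orbit of $x_i$, Proposition \ref{prop:unicre} gives $|\Gamma_{x_i}x|=|\Gamma_x x_i|$, and then the first equality of Lemma \ref{lem:haar} forces $m(x)=m(x_i)$. Hence $m$ is constant on each orbit and the set $\{m(\Gamma_x):x\in V\}$ contains at most $k$ elements.

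For the ``if'' direction I would argue by contrapositive. Assuming $\Gamma$ is nonunimodular, Proposition \ref{prop:unicre} supplies $x,y$ in the same orbit with $|\Gamma_x y|\neq|\Gamma_y x|$, so by Lemma \ref{lem:haar} the ratio $s:=m(y)/m(x)$ satisfies $s\neq 1$. Writing $y=\gamma x$, I would then iterate the $\Gamma$-equivariance provided by the second equality of Lemma \ref{lem:haar}: applying it to the pair $(\gamma^{n-1}x,\gamma^n x)$ with the element $\gamma$ yields $m(\gamma^n x)/m(\gamma^{n+1}x)=m(\gamma^{n-1}x)/m(\gamma^n x)$, so the sequence $a_n:=m(\gamma^n x)$ is geometric with common ratio $a_1/a_0=s$, giving $m(\gamma^n x)=s^n m(x)$. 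Since $s\neq 1$, these values are all distinct, so $\{m(\Gamma_x):x\in V\}$ is infinite.

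Because the proof is short, I do not anticipate a substantive obstacle; the only real content is the observation that nonunimodularity at a single pair $(x,\gamma x)$ automatically propagates along the full $\gamma$-orbit to produce a geometric sequence of stabilizer Haar measures, which is incompatible with finiteness of $\{m(\Gamma_x):x\in V\}$.
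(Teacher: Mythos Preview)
Your proposal is correct and follows essentially the same approach as the paper: both directions use Proposition~\ref{prop:unicre} together with Lemma~\ref{lem:haar}, and the nontrivial direction is handled by iterating a single $\gamma$ along its orbit to produce a geometric sequence $m(\gamma^n x)=s^n m(x)$ with $s\neq 1$. The only cosmetic difference is that the paper normalizes to $M=m(\gamma x)/m(x)>1$ (by swapping $x$ and $\gamma x$ if necessary), whereas you allow $s\neq 1$; this makes no difference to the argument.
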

\begin{proof}
	If $\Gamma$ is unimodular, then by Proposition \ref{prop:unicre} and Lemma \ref{lem:haar} $m(\Gamma_x)$ is a constant on each orbit. Since there are only finitely many orbits,  $\{m(\Gamma_x), x\in V\}$ is finite.
	
	If $\Gamma$ is nonunimodular, then by Proposition \ref{prop:unicre} and Lemma \ref{lem:haar} there exist $x\in V,\gamma\in\Gamma$ such that  $M:=\frac{m(\gamma x)}{m(x)}>1$. Since by Lemma \ref{lem:haar} one has $\frac{m(\gamma^n x)}{m(\gamma^{n-1}x)}=\frac{m(\gamma^{n-1}x)}{m(\gamma^{n-2}x)}=\cdots=\frac{m(\gamma x)}{m(x)}=M>1$, $\frac{m(\gamma^n x)}{m(x)}=M^n\rightarrow\infty$ as $n\rightarrow\infty$, in particular $\{m(\Gamma_x), x\in V\}$ is not a finite set.
\end{proof}

Now we recall some terminology from percolation theory. Suppose $G=(V,E)$ is a locally finite, connected graph. Let $2^E$ be the collection of all subsets $\eta\subset E$. Let $\mathcal{F}_E$ be the $\sigma$-field generated by  sets of the form $\{\eta:e\in\eta\}$ where $e\in E$.  A bond percolation on $G$ is a pair $(\mathbf{P},\omega)$, where $\omega$ is a random element in $2^E$ and $\mathbf{P}$ is the law of $\omega$. For simplicity sometimes we will just say $\omega$ is a bond percolation. One can also define site percolation and mixed percolation on $G$ and the interested reader can refer to \cite{LP2016} or \cite{Grimmett1999} for more background on percolation. If $\omega$ is a bond percolation, then $\widehat{\omega}:=V\cup \omega$ is a mixed percolation on $2^{V\cup E}$. We will think of $\omega$ as a subgraph of $G$ and not bother to distinguish $\omega$ and $\widehat{\omega}$.

We will mainly focus on Bernoulli bond percolation on $G$, which we now define. The Bernoulli($p$) bond percolation on $G$ is a pair $(\mathbf{P}_p,\omega)$ that satisfies $\mathbf{P}_p$ is a product measure on $2^E$ and $\mathbf{P}_p[e\in\omega]=p$ for all edge $e\in E$. Sometimes we also write Bernoulli($p$) percolation as Bernoulli $p$-percolation.

If $v\in V$ and $\omega$ is a bond percolation on $G$, the cluster $C_\omega(v)$ (or just $C(v)$) of $\omega$ is defined as the connected component of $v$ in $\omega$. We sometimes also use $C(v)$ to denote the set of vertices (or edges) of the connected component of $v$ in $\omega$. 

If $(\mathbf{P},\omega)$ is a bond percolation on $G$ and $\Gamma$ is a subgroup of $\textnormal{Aut}(G)$, we call $(\mathbf{P},\omega)$  $\Gamma$-invariant if $\mathbf{P}$ is invariant under each $\gamma\in\Gamma$. Bernoulli percolation is obviously $\textnormal{Aut}(G)$-invariant, in particular Bernoulli percolation is $\Gamma$-invariant for any subgroup $\Gamma\subset \textnormal{Aut}(G)$. 

Next we recall some definitions for indistinguishability of infinite clusters. The following definition of invariant property is adapted from Definition 1.8 of \cite{TomAsaf2017}.
\begin{definition}\label{def: invariant property}
	Suppose $G=(V,E)$ is locally finite, connected infinite graph and $\Gamma$ is a closed quasi-transitive subgroup of $\textnormal{Aut}(G)$. For a measurable set $\mathcal{A}$ of $V\times\{0,1\}^E$, we say $\mathcal{A}$ is a \textbf{$\Gamma$-invariant} property, if $\gamma\mathcal{A}=\mathcal{A}$ for every $\gamma\in\Gamma$ and 
	\[
	(v,\omega)\in\mathcal{A}\ \Rightarrow\  \ \forall u\in C_{\omega}(v),\ (u,\omega)\in \mathcal{A}.
	\] We say that a cluster $C$ of $\omega$ has property $\mathcal{A}$ (and abuse notation by writing $C\in\mathcal{A}$) if $(u,\omega)\in \mathcal{A}$ for some (and hence every) vertex $u\in C$. 
\end{definition}

For example, $\mathcal{A}$ might be the collection  of all connected heavy subgraphs  of $G$, or the collection of  all recurrent subgraphs of $G$. For other examples see \cite{LS1999a,TomAsaf2017}. 

\begin{definition}[Definition 3.1 of \cite{LS1999a}]\label{def: indistinguishable infinite clusters}
	Suppose $G=(V,E)$ is locally finite, connected infinite graph and $\Gamma$ is a closed quasi-transitive subgroup of $\textnormal{Aut}(G)$.  Let 
	$(\mathbf{P},\omega)$ be a $\Gamma$-invariant bond percolation process on $G$. We say that $\mathbf{P}$ has ($\Gamma$-) \textit{indistinguishable infinite clusters} if for every $\Gamma$-invariant property $\mathcal{A}$, almost surely, for all infinite clusters $C$ of $\omega$, the cluster $C$  has property $\mathcal{A}$, or none of the infinite clusters has property $\mathcal{A}$. 
\end{definition}

\begin{definition}[Definition 3.2 of \cite{LS1999a}]\label{def: insertion-deletion}
	Let $G=(V,E)$ be a graph. Given a configuration $\omega\in 2^E$ and an edge $e\in E$, denote $\Pi_e\omega:=\omega\cup\{e\}$. For a  set $\mathcal{A}\subset 2^E$, we write $\Pi_e\mathcal{A}:=\{\Pi_e\omega: \ \omega\in\mathcal{A}\}$. A bond percolation process $(\mathbf{P},\omega)$ on $G$ is said to be \textit{insertion tolerant} if $\mathbf{P}[\Pi_e\mathcal{A}]>0$ for every $e\in E$ and every measurable $\mathcal{A}\subset 2^E$ with $\mathbf{P}[\mathcal{A}]>0$. A percolation $\omega$ is \textit{deletion tolerant} if $\mathbf{P}[\Pi_{\neg e}\mathcal{A}]>0$ for every $e\in E$ and every measurable $\mathcal{A}\subset 2^E$ with $\mathbf{P}[\mathcal{A}]>0$, where $\Pi_{\neg e}\omega:=\omega-\{e\}$. 
\end{definition}

For example Bernoulli($p$) percolation is insertion and deletion tolerant when $p\in(0,1)$.

For unimodular transitive graphs, Lyons and Schramm showed the following:
\begin{theorem}[Theorem 3.3 of \cite{LS1999a}]\label{thm:indisuni}
	Let $G$ be a graph with a transitive unimodular closed automorphism group $\Gamma\subset \textnormal{Aut}(G)$. Every $\Gamma$-invariant, insertion-tolerant, bond percolation process on $G$ has indistinguishable infinite clusters. 
\end{theorem}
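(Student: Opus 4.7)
The plan is to argue by contradiction via a stationarity-of-environment argument using a delayed simple random walk on infinite clusters, together with insertion tolerance; unimodularity of $\Gamma$ enters through the mass transport principle. Suppose for contradiction that $\mathcal{A}$ is a $\Gamma$-invariant cluster property such that, with positive probability under $\mathbf{P}$, both $\mathcal{A}$-clusters and non-$\mathcal{A}$-clusters appear. After passing to an ergodic component of $\mathbf{P}$ under the $\Gamma$-action, we may assume this coexistence holds almost surely.

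The main construction is the delayed simple random walk $(X_n)_{n\geq 0}$ on the cluster of a fixed root $o\in V$: set $X_0=o$, and at each step either stay put with probability $1/2$ or step to a uniformly chosen neighbor of $X_n$ inside $C_\omega(X_n)$. Since $X_n$ remains in $C_\omega(o)$ and $\mathcal{A}$ is a cluster property in the sense of Definition \ref{def: invariant property}, the indicator $\mathbf{1}\{(X_n,\omega)\in\mathcal{A}\}$ is deterministically constant in $n$. I would then show, via the mass transport principle, that if we bias $\mathbf{P}$ by $\deg_\omega(o)$ on the event that $o$ lies in an infinite cluster, the environment seen from the walker (modulo the $\Gamma$-action) is a stationary Markov chain. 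The key point is reversibility of the delayed walk against the measure $x\mapsto \deg_\omega(x)$, which after integrating against $\mathbf{P}$ becomes the balance of a diagonally $\Gamma$-invariant transport function; transitivity and unimodularity make this balance a direct instance of mass transport.

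To conclude I would need ergodicity of this environment chain. Insertion and deletion tolerance produce a tail-triviality property: any event in the tail $\sigma$-algebra of $(\omega,X_n)$ can be approximated by a cylinder on a finite window, and applying insertion or deletion of each edge in the window shifts the cylinder via a bounded Radon--Nikodym factor, so the tail event is essentially independent of arbitrary finite modifications of the configuration. Iterating across growing windows yields a $0$-$1$ law, hence ergodicity. The conserved indicator $\mathbf{1}\{(X_0,\omega)\in\mathcal{A}\}$ is then almost surely constant under the biased measure, which transfers back to $\mathbf{P}$ to contradict the assumed coexistence of both types within the chosen ergodic component.

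The main obstacle is the ergodicity of the environment chain via insertion tolerance. One must carefully manage how insertion or deletion of a single edge alters cluster structure, the walk's transition kernel, and the invariant measure simultaneously, and iterate these modifications across arbitrarily large finite windows while controlling the accumulated Radon--Nikodym factors and ensuring that the modified walk still stays inside a single cluster. This is the technical heart of the Lyons--Schramm argument; the preceding mass-transport stationarity step and the cluster-property reduction are essentially formal once unimodularity is available, so the bulk of the work will lie in making the insertion/deletion mixing argument quantitative.
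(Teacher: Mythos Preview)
Your stationarity step is fine and is indeed the starting point of the Lyons--Schramm argument (and of the paper's proof of Theorem~\ref{thm:heavy clusters are indis}). The genuine gap is your ergodicity step. First, you invoke deletion tolerance, which the theorem does not assume; insertion tolerance alone is all you have. Second, and more seriously, your sketch ``tail events can be approximated by cylinders, and insertion/deletion of edges in the window moves the cylinder by a bounded Radon--Nikodym factor, so tail events are independent of finite modifications'' does not prove ergodicity of the environment chain. Tail events of $(\omega,X_n)$ are not cylinder events in $\omega$; inserting a single edge can merge two infinite clusters and thereby flip the conserved indicator $\mathbf{1}\{C(o)\in\mathcal{A}\}$ itself, so the ``bounded Radon--Nikodym'' heuristic collapses exactly on the functionals you care about. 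There is no known route from insertion tolerance to ergodicity of the environment-from-the-walker process, and the Lyons--Schramm proof does not attempt one.

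What the actual argument does instead---and what this paper reproduces in the nonunimodular setting in the proof of Theorem~\ref{thm:heavy clusters are indis}---is to bypass ergodicity entirely. One first shows (Lemma~\ref{lem:pivotal edge}) that coexistence of $\mathcal{A}$- and $\neg\mathcal{A}$-clusters forces the existence of \emph{pivotal edges}: edges whose insertion changes the type of $C(\rho)$. One then runs a two-sided delayed simple random walk, uses insertion tolerance to compare the probability that $e_n$ is pivotal with the probability after inserting $e_n$, and uses \emph{transience} of the walk on infinite clusters (Proposition~3.11 of \cite{LS1999a} in the unimodular case) to send the walk away from any fixed cylinder approximation $A_\rho'$ of $A_\rho$. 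Stationarity then says the probability of ``$e_n$ is pivotal and $C(\rho)\in\mathcal{A}$'' is independent of $n$, and the comparison forces this probability to be arbitrarily small, a contradiction. The crucial missing ingredients in your outline are the pivotal-edge lemma and transience; without them, stationarity alone does not yield the conclusion.
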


Martineau's paper \cite{Martineau2015}  explored the link between ergodicity and indistinguishability in percolation theory. 

Example 3.15 of \cite{LS1999a} showed that a deletion-tolerant bond percolation could have distinguishable infinite clusters. Other examples of indistinguishability for non-insertion tolerant percolation processes see \cite{TomAsaf2017} and \cite{Timar2015}.

We also need a result from H\"{a}ggstr\"{o}m, Peres and Schonmann \cite{HPS1999}. 
\begin{definition}[Robust invariant property]
	Let $G=(V,E)$ be a graph and $\Gamma$ be a closed quasi-transitive subgroup of $\textnormal{Aut}(G)$. Let 
	$(\mathbf{P},\omega)$ be a $\Gamma$-invariant bond percolation process on $G$. Suppose $\mathcal{A}$ is a $\Gamma$-invariant property. We say that  $\mathcal{A}$ is a \textit{robust invariant property} if for every infinite connected subgraph $C$ of $G$ and every edge $e\in C$ we have the equivalence: $C$ has property $\mathcal{A}$
	iff there is an infinite connected component of $C\backslash\{e\}$ that has property $\mathcal{A}$. 
\end{definition}

Even without the unimodularity condition, the robust invariant properties do not distinguish between the infinite clusters of Bernoulli$(p)$ percolation  \cite{HPS1999}.
\begin{theorem}[Theorem 4.1.6 of \cite{HPS1999}]\label{thm:robust}
	Let $G$ be a quasi-transitive graph and   $p\in(p_c,p_u]$. If $Q$ is a robust invariant property such that  $\mathbf{P}_p(A)>0$, where $A$ is the event that there exists
	an infinite cluster satisfying $Q$, then $\mathbf{P}_p$-a.s., all infinite clusters in $G$ satisfy $Q$. 
\end{theorem}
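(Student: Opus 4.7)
My plan is to argue by contradiction, reducing by ergodicity to a situation where both types of infinite clusters coexist almost surely, and then to use insertion tolerance of Bernoulli percolation together with the robust structure of $Q$ to derive a contradiction. Both the event $A$ (``some infinite cluster satisfies $Q$'') and the event $B$ (``some infinite cluster fails $Q$'') are $\textnormal{Aut}(G)$-invariant, so by ergodicity of $\mathbf{P}_p$ each has probability $0$ or $1$. The hypothesis gives $\mathbf{P}_p(A)=1$; I would assume for contradiction that $\mathbf{P}_p(B)=1$ as well. Then almost surely both types coexist, so there are at least two infinite clusters, and hence by the standard $0/1/\infty$ trichotomy for Bernoulli percolation on quasi-transitive graphs there are infinitely many infinite clusters a.s.\ (the sub-case of a unique infinite cluster at $p=p_u$ is handled immediately by ergodicity alone).

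The key local input is that robustness of $Q$ implies one-sided monotonicity under edge addition: if an infinite connected subgraph $C$ has $Q$ and $e$ is any edge with at least one endpoint in $C$, then $C\cup\{e\}$ has $Q$, since $C$ itself is an infinite component of $(C\cup\{e\})\setminus\{e\}$ that has $Q$. Using connectedness of $G$ and the positive $\textnormal{Aut}(G)$-invariant densities of vertices lying in $Q$-clusters versus $\neg Q$-clusters, I would find a fixed edge $e=(x,y)\in E(G)$ with positive probability of being closed while $x$ lies in an infinite $Q$-cluster and $y$ in an infinite $\neg Q$-cluster. Insertion tolerance at $e$ then produces a positive-probability event on which $e$ is open and the two clusters have merged into a single infinite cluster that, by the edge-addition monotonicity above, satisfies $Q$; hence $y$'s former $\neg Q$-cluster has been absorbed into a $Q$-cluster.

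The main obstacle is that a single merge leaves almost surely infinitely many $\neg Q$-clusters, so the above local argument alone does not suffice. My plan to globalize is to couple $\omega_p$ monotonically with $\omega_q$ at a level $q>p_u$ where uniqueness almost surely holds: by iterating the edge-addition monotonicity of $Q$, the unique infinite cluster $\mathcal{C}_q$ of $\omega_q$ satisfies $Q$ a.s. The delicate point, and the main technical obstacle, is then to transfer $Q$ back down to every infinite $\omega_p$-sub-cluster of $\mathcal{C}_q$: one cannot simply ``remove the extra edges one by one,'' because robust properties need not be preserved under infinite monotone deletions (for instance, recurrence of simple random walk is not). Instead one must use the full if-and-only-if robust equivalence together with a careful choice of a ``dominant'' infinite component at each finite peeling stage of an exhaustion of $\omega_q\setminus\omega_p$, coupled with the insertion-tolerance comparison from the previous paragraph to contradict the existence of an infinite $\omega_p$-sub-cluster of $\mathcal{C}_q$ violating $Q$. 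Executing this descent rigorously is what makes the argument nontrivial.
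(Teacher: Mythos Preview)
The paper does not prove this theorem; it is quoted from H\"{a}ggstr\"{o}m--Peres--Schonmann \cite{HPS1999} and used as a black box, so there is no in-paper proof to compare your attempt against.

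That said, your proposal has two genuine gaps, not just the one you flag. The step you treat as routine --- that ``by iterating the edge-addition monotonicity of $Q$, the unique infinite cluster $\mathcal{C}_q$ of $\omega_q$ satisfies $Q$ a.s.'' --- is already unjustified. Robustness yields one-sided monotonicity under adding a \emph{single} edge, and by induction under adding finitely many, but it does \emph{not} pass to infinite monotone increasing limits. A concrete counterexample: the property ``$C$ has infinitely many bridges (cut-edges)'' is robust in the paper's sense --- removing one edge leaves all other bridges intact in their component, and adding one edge back destroys only the finitely many bridges lying on a single finite path --- yet one can exhaust $\mathbb{Z}^2$ by an increasing chain $C_1\subset C_2\subset\cdots$ of infinite connected subgraphs each having infinitely many bridges, while $\bigcup_n C_n=\mathbb{Z}^2$ has none. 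So you cannot conclude that $\mathcal{C}_q$ inherits $Q$ from an infinite $p$-subcluster, and your coupling scheme breaks down before the descent you identify as hard. (Separately, your parenthetical about recurrence is confused: recurrence is preserved under arbitrary edge-deletions by Rayleigh monotonicity, and it is not robust in the paper's sense anyway, since gluing an infinite recurrent graph to an infinite transient graph by a single bridge yields a transient graph.)

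The descent from $\mathcal{C}_q$ back to the $\omega_p$-clusters, which you correctly identify as an obstacle, is a second, independent gap, and your outline offers no actual mechanism for it. The argument in \cite{HPS1999} does use the standard monotone coupling together with their ``simultaneous uniqueness'' Theorem~4.1.3, but it is organized so as to avoid both the infinite-addition and infinite-deletion pitfalls; you should consult their proof directly.
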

An immediate application of Theorem \ref{thm:robust} is that almost surely light and heavy infinite clusters cannot coexist  for Bernoulli percolation.

\section{The Tilted Mass Transport Principle}

The mass transport principle turns out to be a very useful technique especially on unimodular transitive graphs \cite{BLPS1999}. Hutchcroft \cite{Tom2017} introduced a tilted version  of mass transport principle and it turns out be quite useful when applied to nonunimodular quasi-transitive graphs. 

Following Hutchcroft's notation  \cite{Tom2017} we define modular function as below:
\begin{definition}
	Let $G=(V,E)$ be a locally finite connected infinite graph and $\Gamma\subset \text{Aut}(G)$ acts on $G$ quasi-transitively. Let $\mathcal{O}=\{o_1,\ldots,o_L\}$ be a complete set of representatives in $V$ of the orbits of $\Gamma$. Let $a=(a_1,\ldots,a_L)$ be a sequence of positive real numbers with $\sum_{i=1}^{L}a_i=1$.  The \textbf{modular function} $\Delta_{\Gamma,a}:V^2\rightarrow(0,\infty)$ is defined as follows:
	\[
	\Delta_{\Gamma,a}(x,y)=\frac{a_ym(y)}{a_xm(x)}\stackrel{\textnormal{Lemma }\ref{lem:haar}}{=}\frac{a_y|\Gamma_yx|}{a_x|\Gamma_xy|},
	\]
	where $a_x:=a_i$ if  $x\in\Gamma o_i$. 
\end{definition}

Now we give the tilted mass transport principle (TMTP) (a slight generalization of Proposition 2.2 in Hutchcroft \cite{Tom2017}):
\begin{proposition}[TMTP]\label{prop:TMTP}
	With the same notations as in the above definition of modular function, suppose $f:V^2\rightarrow[0,\infty]$ is invariant under the diagonal action of $\Gamma$. Let $\rho$ be sampled from $\mathcal{O}$ with distribution $a$.  Then 
	\[
	\mathbb{E}[\sum_{x\in V}{f(\rho,x)}]=\mathbb{E}[\sum_{x\in V}{f(x,\rho)\Delta_{\Gamma,a}(\rho,x)}].
	\]
\end{proposition}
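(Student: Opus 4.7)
My plan is to convert orbit sums on $V$ into integrals on $\Gamma$ against its Haar measure, and then compare the two sides of the TMTP via the inversion $\gamma \mapsto \gamma^{-1}$. The basic identity I rely on is
$$\sum_{y \in \Gamma x} h(y) \,=\, \frac{1}{m(x)} \int_\Gamma h(\gamma x)\, dm(\gamma),$$
valid for any $x \in V$ and any nonnegative measurable $h$, which follows because the fibers of the map $\gamma \mapsto \gamma x$ are left cosets of $\Gamma_x$ and hence all have $m$-mass $m(x)$.

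First, I would apply this identity to rewrite the left-hand side. Partitioning $V = \bigsqcup_j \Gamma o_j$ and using the diagonal $\Gamma$-invariance of $f$ to rewrite $f(o_i, \gamma o_j) = f(\gamma^{-1}o_i, o_j)$, I get
$$\mathrm{LHS} \,=\, \sum_{i,j=1}^{L} \frac{a_i}{m(o_j)} \int_\Gamma f(o_i, \gamma o_j)\, dm(\gamma) \,=\, \sum_{i,j} \frac{a_i}{m(o_j)} \int_\Gamma f(\gamma^{-1}o_i, o_j)\, dm(\gamma).$$

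Next, I would perform the change of variable $\gamma \mapsto \gamma^{-1}$ in the Haar integral. This introduces the modular function $\Delta_\Gamma$ of the group $\Gamma$ itself, and the crucial identification is
$$\Delta_\Gamma(\gamma^{-1}) \,=\, \frac{m(\gamma x)}{m(x)} \qquad (x \in V),$$
which follows from $\Gamma_{\gamma x} = \gamma \Gamma_x \gamma^{-1}$ together with the left-invariance of $m$ (and is essentially a reformulation of Lemma \ref{lem:haar}). Applying this and then running the orbit identity backwards with $h(y) = f(y, o_j)\, m(y)/m(o_i)$ gives
$$\int_\Gamma f(\gamma^{-1}o_i, o_j)\, dm(\gamma) \,=\, \int_\Gamma f(\gamma o_i, o_j)\, \frac{m(\gamma o_i)}{m(o_i)}\, dm(\gamma) \,=\, \sum_{y \in \Gamma o_i} f(y, o_j)\, m(y).$$

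Substituting back, swapping the labels $(i,j) \leftrightarrow (j,i)$, and recognizing $a_x = a_j$ whenever $x \in \Gamma o_j$, I obtain
$$\mathrm{LHS} \,=\, \sum_i a_i \sum_{x \in V} f(x, o_i)\, \frac{a_x m(x)}{a_i m(o_i)} \,=\, \mathbb{E}\Bigl[\sum_{x \in V} f(x, \rho)\, \Delta_{\Gamma,a}(\rho, x)\Bigr],$$
which is exactly the right-hand side. The step requiring the most care is the inversion: one must verify that the Jacobian produced by the left-Haar change of variable $\gamma \mapsto \gamma^{-1}$ coincides precisely with the vertex weight ratio $m(\gamma o_i)/m(o_i)$, as it is this that accounts for the asymmetric tilt $\Delta_{\Gamma,a}$ on the right-hand side. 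Everything else is bookkeeping, and Tonelli's theorem combined with the nonnegativity of $f$ legitimises all rearrangements.
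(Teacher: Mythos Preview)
Your proof is correct. The paper itself simply cites Corollary~3.7 of \cite{BLPS1999} without giving details, whereas you have written out a self-contained derivation via Haar-measure integration and the inversion $\gamma\mapsto\gamma^{-1}$. Your argument is essentially the standard proof of that cited general mass transport principle, specialized to this setting: the key identification $\Delta_\Gamma(\gamma^{-1})=m(\gamma x)/m(x)$ (coming from $\Gamma_{\gamma x}=\gamma\Gamma_x\gamma^{-1}$ and left invariance) is exactly what produces the tilt, and your bookkeeping with the orbit decomposition and the swap $(i,j)\leftrightarrow(j,i)$ is clean. So the two approaches agree in substance; you have simply unpacked what the paper leaves to the reference.
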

\begin{proof}
	This is easily seen from the Corollary 3.7 of \cite{BLPS1999}. 
\end{proof}

Hutchcroft made a particular choice for $a$ and proved for this particular choice that $\Delta_{\Gamma,a}$ has certain nice properties (Lemma 2.3 in \cite{Tom2017}). To be precise, write $[v]$ for the orbit of $v$ under $\Gamma$ and identify $\mathcal{O}$ with the  space of orbits. Let $(X_n)_{n\geq0}$ be a lazy simple random walk on $G$, namely $\mathbb{P}[X_{n+1}=u|X_n=v]=\frac{1}{2}\mathbf{1}_{\{u=v\}}+\frac{1}{2\textnormal{deg}(v)}\mathbf{1}_{\{u\sim v\}}$. Let $([X_n])_{n\geq0}$ be the corresponding Markov chain induced on the finite state space $\mathcal{O}$, which was called the \textbf{lazy orbit chain}. Note this chain is irreducible and hence it has a unique stationary measure $\widetilde{\mu}$ on $\mathcal{O}$. Let $\mu$ be the probability measure gotten from $\widetilde{\mu}$ biased by $\textnormal{deg}([v])^{-1}$:
\[
\mu([o])=\frac{\widetilde{\mu}([o])\textnormal{deg}([o])^{-1}}{\sum_{x\in\mathcal{O}}{\widetilde{\mu}([x])\textnormal{deg}([x])^{-1}}},\forall o\in\mathcal{O}.
\]
We also write $\mu_x:=\mu([x]),\forall x\in V$.
Then Lemma 2.3 in \cite{Tom2017} can be slightly strengthened to 
\begin{lemma}\label{lem:harmonicity of modular fcn}
	With the same notation as in the above definition of modular function, let $a=(a_1,\ldots,a_L)$ be a  sequence of positive numbers with $\sum_{i}^{L}a_i=1$. Then the modular function $\Delta=\Delta_{\Gamma,a}$ satisfies the following properties:
	\begin{enumerate}
		\item $\Delta$ is $\Gamma$ diagonally invariant.
		
		\item $\Delta$ satisfies the cocycle identity, i.e. for all $x,y,z\in V$ 
		\[
		\Delta(x,y)\Delta(y,z)=\Delta(x,z).
		\]
		
		\item Furthermore $a=\mu$, i.e. $a_i=\mu([o_i]),i=1,\ldots,L$ if and only if for some fixed $x\in V$ (hence for every $x\in V$), $\Delta(x,y)$ is a harmonic  function of $y\in V$, namely
		\[
		\Delta(x,y)=\frac{1}{\textnormal{deg}(y)}\sum_{z\sim y}{\Delta(x,z)},
		\]
		where the sum on the right hand side is taken with multiplicity if there are multiple edges between $y$ and $z$. 
	\end{enumerate}
\end{lemma}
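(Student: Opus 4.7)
The plan is to verify the three assertions in order, with all of the real content concentrated in (3).

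Parts (1) and (2) should be essentially routine. Diagonal invariance of $\Delta$ follows because $a_x$ depends only on the $\Gamma$-orbit of $x$, combined with the identity $m(\gamma x)/m(\gamma y)=m(x)/m(y)$ supplied by Lemma~\ref{lem:haar}; the cocycle identity is visible by inspection of the defining ratio.

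For (3), I would first use the cocycle identity to peel away the dependence on $x$. Writing $\Delta(x,z)=\Delta(x,y)\Delta(y,z)$ inside the harmonicity condition and dividing both sides by $\Delta(x,y)$, the statement that $\Delta(x,\cdot)$ is harmonic at $y$ collapses, independently of $x$, to
\[
\sum_{z\sim y}\Delta(y,z)=\deg(y),\qquad\text{equivalently},\qquad \sum_{z\sim y}a_z m(z)=a_y m(y)\deg(y). \qquad(\star)
\]
Since $a$, $m$, and $\deg$ are constant on $\Gamma$-orbits, $(\star)$ is really a system of $L$ equations on the finite set $\mathcal{O}$, conveniently written in terms of the neighbor counts $N([y],[z]):=|\{z'\sim y:[z']=[z]\}|$ (well-defined because $\Gamma$ preserves adjacency). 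I would then apply the standard mass transport principle (Corollary~3.7 of \cite{BLPS1999}) to the diagonally invariant transport $f(y,z)=\mathbf{1}_{y\sim z}\mathbf{1}_{[y]=[y_0]}\mathbf{1}_{[z]=[z_0]}$ to extract the symmetry $N([y],[z])\,m([z])=N([z],[y])\,m([y])$. Substituting this relation into $(\star)$ and cancelling $m([y])$ turns it into
\[
a_{[y]}\deg([y])=\sum_{[z]\in\mathcal{O}}a_{[z]}\,N([z],[y]),\qquad\forall [y]\in\mathcal{O}. \qquad(\star\star)
\]

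The final step is to recognize $(\star\star)$ as a stationarity equation. Setting $\nu([y]):=a_{[y]}\deg([y])$, $(\star\star)$ reads $\nu([y])=\sum_{[z]}\nu([z])\,N([z],[y])/\deg([z])$, which after relabeling is exactly the statement that $\nu$ is a stationary measure for the orbit chain (and hence for the lazy orbit chain) on $\mathcal{O}$. Since that chain is irreducible on a finite state space, its stationary distribution $\widetilde{\mu}$ is unique up to a positive scalar, and by the very definition of $\mu$ one has $\widetilde{\mu}\propto\mu\cdot\deg$; hence $\nu\propto\widetilde{\mu}$ forces $a\propto\mu$, and the fact that both $a$ and $\mu$ are probability measures on $\mathcal{O}$ pins down $a=\mu$. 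Every implication in the chain is reversible, so this gives both directions of (3) simultaneously. I expect the main technical obstacle to be the MTP-derived symmetry for $N$, but this is a short and standard calculation once the correct test function is chosen; thereafter the argument reduces to algebraic manipulation plus uniqueness of the stationary distribution of a finite irreducible Markov chain.
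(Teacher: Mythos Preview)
Your outline for (1) and (2) is fine, and your overall strategy for (3)---reduce via the cocycle identity to $(\star)$, pass to an orbit equation $(\star\star)$, and recognize $(\star\star)$ as stationarity of $a\cdot\deg$ for the orbit chain---is sound and in fact more self-contained than the paper's proof, which cites Lemma~3.25 of \cite{Woess2000} for one direction and runs a separate harmonic-function argument on an auxiliary chain $p_\mu$ for the other.

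However, there is a real error in your passage from $(\star)$ to $(\star\star)$. The Haar measure $m$ is \emph{not} constant on $\Gamma$-orbits when $\Gamma$ is nonunimodular---this is exactly the content of Lemma~\ref{lem:unicre}---so you cannot regroup $\sum_{z\sim y}a_zm(z)$ as $\sum_{[z]}a_{[z]}N([y],[z])\,m([z])$, and the ``symmetry'' $N([y],[z])\,m([z])=N([z],[y])\,m([y])$ is not even well-defined (which representative's Haar measure?). In the midpoint-subdivided $\mathbb{T}_3$ example discussed after Lemma~\ref{lem:uni-conductance}, with $o_1$ a tree vertex and $o_2$ the adjacent midpoint toward the fixed end, one has $m(o_1)=m(o_2)$ while $N([o_1],[o_2])=3\neq 2=N([o_2],[o_1])$, so your identity fails outright.

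The repair is local. Applying Proposition~\ref{prop:TMTP} with the uniform weight to $f(u,v)=\mathbf{1}_{u\sim v}\mathbf{1}_{[u]=[o_i]}\mathbf{1}_{[v]=[o_j]}$ gives the correct identity
\[
N(o_i,[o_j])\,m(o_j)\;=\;\sum_{x\sim o_j,\ [x]=[o_i]} m(x).
\]
Substituting this into $(\star)$ at $y=o_j$ (after grouping the left side by the orbit of $z$) yields $m(o_j)\sum_i a_i\,N(o_i,[o_j])=a_j\,m(o_j)\deg(o_j)$, and now $m(o_j)$ genuinely cancels to give $(\star\star)$. From there your stationarity-and-uniqueness argument is correct and handles both directions at once.
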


\begin{proof}
	
	(1) For any $x,y\in V,\gamma\in\Gamma$ one has $a_{\gamma x}=a_x,a_{\gamma y}=a_y$. Hence
	\[
	\Delta(\gamma x,\gamma y)=\frac{a_{\gamma y}m(\gamma y)}{a_{\gamma x}m(\gamma x)}=\frac{a_ym(y)}{a_xm(x)}=\Delta(x,y).
	\]
	
	(2) For any $x,y,z\in V$ one has 
	\[
	\Delta(x,y)\Delta(y,z)=\frac{a_ym(y)}{a_xm(x)}\frac{a_zm(z)}{a_ym(y)}=\frac{a_zm(z)}{a_xm(x)}=\Delta(x,z).
	\]

	(3) The direction that $a=\mu$ implies the harmonicity of $\Delta(x,\cdot)$ was already shown in Lemma 2.3 in \cite{Tom2017} and here we provide an alternative proof. If one defines the transition matrix $\widetilde{P}$ on the factor chain on $\mathcal{O}$ 
	by $\widetilde{p}(o_i,o_j)=\frac{1}{\textnormal{deg}(o_i)}\sum_{x\in [o_j]}\mathbf{1}_{\{o_i\sim x\}}$, then $\widetilde{\mu}$ is also a stationary probability measure for the Markov chain on $\mathcal{O}$ determined by $\widetilde{p}$. By Lemma 3.25 of \cite{Woess2000} one has that $\nu(x):=\widetilde{\mu}([x])|\Gamma_x|=\widetilde{\mu}([x])m(x)$ is a stationary measure for simple random walk on $G$. From this it follows that for $a=\mu$ the function $\Delta(x,\cdot)$ is harmonic for any fixed $x\in V$.

	On the other hand, if for some fixed $x$, the function $y\mapsto \Delta(x,y)$ is harmonic, then  one has the following system of equations:
	\begin{equation}\label{eq:harmonicity}
		\left\{
		\begin{array}{ccc}
			0&=&\sum_{z\sim y}[a_zm(z)-a_ym(y)],y\in\mathcal{O}\\
			1&=&a_1+\ldots+a_L
		\end{array}
		\right.
	\end{equation}
	We already showed that when $a=\mu$, the function $y\mapsto \Delta(x,y)$ is harmonic, whence $a=\mu$ satisfies the above equations (\ref{eq:harmonicity}) and then it suffices to show $a=\mu$ is also the unique solution of (\ref{eq:harmonicity}). 
	
	Rewrite (\ref{eq:harmonicity}) as the following:
	\begin{equation}\label{eq:h2}
		\left\{
		\begin{array}{ccc}
			0&=&\sum_{z\sim y}[\frac{a_z}{\mu_z}\mu_zm(z)-\frac{a_y}{\mu_y}\mu_ym(y)],y\in\mathcal{O}\\
			1&=&a_1+\ldots+a_L
		\end{array}
		\right.
	\end{equation}
	Since $a=\mu$ satisfies (\ref{eq:harmonicity}), one can define a transition matrix $p_\mu$ on $\mathcal{O}$ by $p_\mu(y,z)=\frac{1}{\text{deg}(y)}\sum_{w\sim y,w\in[z]}{\frac{\mu_wm(w)}{\mu_ym(y)}}$. Since $G$ is connected, the Markov chain on the finite space $\mathcal{O}$ determined by $p_\mu$ is irreducible. Define $f(x):=\frac{a_x}{\mu_x},x\in V$. Then equation (\ref{eq:h2}) implies that $f$ is a harmonic function for $p_\mu$. 
	Since the Markov chain determined by $p_\mu$ is an irreducible Markov chain on a finite state space, the harmonic function $f$ must be constant. Together with $a_1+\ldots+a_L=\mu([o_1])+\ldots+\mu([o_L])=1$, we have $a=\mu$.

	This completes the proof. 
\end{proof}

\begin{remark}\label{rem:uni}
	If $\Gamma$ is unimodular, then on each orbit $m(\cdot)$ is a constant and then one obvious solution of (\ref{eq:harmonicity}) is $a_i=\frac{m(o_i)^{-1}}{\sum_{x\in\mathcal{O}}{m(x)^{-1}}}$. By the uniqueness of solution, one has $\mu([v])=\frac{m(v)^{-1}}{\sum_{x\in\mathcal{O}}{m(x)^{-1}}},\forall v\in V$. 
	Therefore in the case $a=\mu$, $\Delta_{\Gamma,\mu}(x,y)=\frac{\mu_ym(y)}{\mu_xm(x)}$ is a constant function of $y$ for fixed $x$, whence $\Delta_{\Gamma,\mu}(x,\cdot)$ is not only harmonic for simple random walk but also for any random walk  on $G$ associated with a $\Gamma$-invariant conductance. 
	
	Moreover by Theorem 3.1 of \cite{AL2007}, if one takes a random root $\rho$ from $\mathcal{O}$ with probability $\mathbb{P}(\rho=o_i)=\mu([o_i])$, then $[G,\rho]$ is a unimodular random rooted graph. 
	And by Theorem 4.1 of \cite{AL2007} with $p$ corresponding to simple random walk on $G$ and $\nu$ corresponding to degree, one has $\widehat{\mu}$ is a stationary probability measure for the trajectories on rooted graphs induced from simple random walk. In particular, the marginal of $\widehat{\mu}$ on the root of the initial rooted graph is a stationary measure for the lazy orbit chain which coincides with previous choice of $\widetilde{\mu}$. 
	
\end{remark}

Another natural question to ask is whether for a given $a=(a_1,\ldots,a_L)$ with $a_i>0,\sum_{i=1}^{L}a_i=1$, there is a deterministic $\Gamma$-invariant conductance function $c:E\rightarrow(0,\infty)$ such that the function $\Delta_{\Gamma,a}(x,\cdot)$ is harmonic for the network $(G,c)$. Previous harmonic function (correponding to simple random walk without explicit mentioned conductance) is just the case $c\equiv1$. 

For unimodular $\Gamma$ we have a complete answer.
\begin{lemma}\label{lem:uni-conductance}
	Suppose $\Gamma$ is unimodular and $a=(a_i)_{i=1}^L$ is a sequence of positive numbers with $\sum_{i=1}^{L}a_i=1$. Then $a=\mu$ if and only if there exists a $\Gamma$-invariant conductance $c:E\rightarrow(0,\infty)$ such that  $\Delta(x,\cdot)=\Delta_{\Gamma,a}(x,\cdot)$ is harmonic for network $(G,c)$. Also in the case $a=\mu$, $c$ can be chosen to be any $\Gamma$-invariant conductance.
\end{lemma}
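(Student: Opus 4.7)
The plan is to handle the two directions separately. The forward direction (if $a=\mu$) will be essentially immediate from Remark \ref{rem:uni}, and the main work will be in the converse.

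For the forward direction, I would invoke Remark \ref{rem:uni}: since $\Gamma$ is unimodular, $m(\cdot)$ is constant on each orbit, and the explicit formula $\mu_v = m(v)^{-1}/\sum_{x\in\mathcal{O}} m(x)^{-1}$ shows that $\mu_v m(v)$ is independent of $v$. Consequently $\Delta_{\Gamma,\mu}(x,y) = \frac{\mu_y m(y)}{\mu_x m(x)} \equiv 1$ is a constant function of $y$, and any constant function is trivially harmonic for any conductance network. This also yields the final assertion of the lemma: in the case $a=\mu$ one may take $c$ to be any $\Gamma$-invariant conductance.

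For the converse, suppose a $\Gamma$-invariant conductance $c:E\to(0,\infty)$ exists such that $\Delta(x,\cdot)$ is harmonic for $(G,c)$. Using the cocycle identity $\Delta(x,z)=\Delta(x,y)\Delta(y,z)$, I would rewrite the harmonicity equation as
\[
\sum_{z\sim y} c(y,z)\bigl[\Delta(y,z)-1\bigr]=0,
\]
which upon multiplying by $a_y m(y)$ becomes
\[
\sum_{z\sim y} c(y,z)\bigl[a_z m(z)-a_y m(y)\bigr]=0,
\]
i.e.\ the function $f(y):=a_y m(y)$ is $c$-harmonic on $V$. Since $\Gamma$ is unimodular, $m(\cdot)$ is constant on each orbit, and by construction $a_y$ depends only on the orbit of $y$; hence $f$ is $\Gamma$-invariant and descends to a function $\bar f$ on the finite orbit set $\mathcal{O}$.

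The key step is then to verify that $\bar f$ is harmonic for the induced factor chain on $\mathcal{O}$ determined by the $\Gamma$-invariant conductance $c$, that this factor chain is irreducible (which uses connectedness of $G$ and quasi-transitivity of $\Gamma$, plus $c>0$ on every edge), and therefore $\bar f$ must be constant on a finite irreducible state space. I expect this passage from $c$-harmonicity on $V$ to harmonicity on the factor chain to be the main (though mild) obstacle, since one must check that summing the equations over an orbit is compatible with the $\Gamma$-invariance and produces a well-defined weighted Laplacian on $\mathcal{O}$; after that, $a_y m(y)\equiv C$ for some constant $C$, and normalization $\sum_{i=1}^{L} a_i = 1$ forces $C = 1/\sum_{x\in\mathcal{O}} m(x)^{-1}$, yielding $a_y = \mu_y$ as desired.
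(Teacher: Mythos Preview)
Your proposal is correct and follows essentially the same route as the paper: the forward direction is handled via Remark \ref{rem:uni}, and for the converse the paper derives the same $c$-harmonicity equation $\sum_{z\sim y} c(y,z)[a_z m(z)-a_y m(y)]=0$, defines the factor chain $q_c$ on $\mathcal{O}$ explicitly, and uses irreducibility on a finite state space to force $f([z])=a_z m(z)$ constant. The passage to the factor chain is even simpler than you suggest---no summing over orbits is needed, one simply groups the neighbors of a single representative $y\in\mathcal{O}$ by their orbit to read off harmonicity for $q_c$ directly.
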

\begin{proof}
	The if part is already given in the above Remark \ref{rem:uni}. 
	
	Now suppose a $\Gamma$-invariant function $c:E\rightarrow(0,\infty)$ is such that  $\Delta(x,\cdot)=\Delta_{\Gamma,a}(x,\cdot)$ is harmonic for network $(G,c)$. 
	Hence $a$ satisfies the following equations:
	\begin{equation}\label{eq:hamcon}
		\left\{
		\begin{array}{ccc}
			0&=&\sum_{z\sim y}[{a_zm(z)c(z,y)-a_ym(y)c(z,y)}],y\in\mathcal{O}\\
			1&=&a_1+\ldots+a_L
		\end{array}
		\right.
	\end{equation}
	where $c(z,y)=c(y,z)$ denote the conductance of the unoriented edge $e=(y,z)$.

	Consider the Markov chain on $\mathcal{O}$  with transition probability $q_c$ given by
	\[
	q_c(o_i,o_j)=\frac{\sum_{z\sim o_i,z\in[o_j]}{c(o_i,z)}}{\sum_{x\sim o_i}{c(o_i,x)}},\forall o_i,o_j\in\mathcal{O}.
	\]
	Since $\Gamma$ is unimodular, $m(\cdot)$ is a constant function on each orbit, and then we can define $f:\mathcal{O}\rightarrow(0,\infty)$  by $f([z]):=a_zm(z)$. Then by (\ref{eq:hamcon}) $f$ is a harmonic function for the  Markov chain determined by $q_c$. Since $G$ is connected, this Markov chain is irreducible. Moreover this Markov chain has finite state space $\mathcal{O}$, thus $f$ must be a constant function. Thus one has $a_z\propto m(z)^{-1}$. From  Remark \ref{rem:uni} one has $a=\mu$. 
\end{proof}	

For the case $\Gamma$ is nonunimodular, we do not have a complete answer except the transitive case.

For example, let $T$ be the infinite regular tree with degree $3$ and let $G$ be the graph obtained from $T$ by adding a new vertex at the midpoint of each edge of $T$. Fix an end $\xi$ of $T$ and let $\Gamma$ be the subgroup fixing this end $\xi$. Then $\Gamma$ is nonunimodular and quasi-transitive on $G$. Fix $x\in T$ and let $x_1\in T$ denote the neighbor of $x$ such that $x_1$ is  closer to $\xi$. And let $x_2,x_3$ be the other two neighbors of $x$ in $T$. Suppose $y_i$ is the midpoint on edge $(x,x_i)$ for $i=1,2,3$. Then $\mathcal{O}=\{x,y_1 \}$ is a complete set of representatives. Write $a_1=a_x$ , $a_2=a_{y_1}$ and let $\lambda:=\frac{a_1}{a_2}$. Since $a_1+a_2=1$,  $a_1=\frac{\lambda}{1+\lambda},a_2=\frac{1}{1+\lambda}$.  Solving \eqref{eq:hamcon} yields that for any $\lambda\in (\frac{1}{2},1)$, there exists a unique (up to multiplicative constant) $\Gamma$-invariant conductance such that $\Delta_{\Gamma,a}(x,\cdot)$ is harmonic w.r.t. this conductance. For other $\lambda$, the corresponding function $\Delta_{\Gamma,a}(x,\cdot)$ is not harmonic w.r.t. any $\Gamma$-invariant conductance.

Note in the transitive case $L=1,a_1=1=\mu$ and thus the choice of $a$ does not matter. And the function $\Delta_{\Gamma,a}(x,\cdot)=\frac{m(\cdot)}{m(x)}$ in this case. 
\begin{lemma}\label{lem:nonuni-conductance-transitive}
	Suppose $\Gamma$ is nonunimodular and transitive. Then $\Delta(x,\cdot)=\frac{m(\cdot)}{m(x)}$ is harmonic for any  $\Gamma$-invariant conductance $c:E\rightarrow(0,\infty)$.
\end{lemma}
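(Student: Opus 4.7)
The plan is to reduce the claim to harmonicity at a single vertex (using transitivity plus $\Gamma$-invariance of $c$ and $m$-ratios), and then verify that single-vertex identity with one application of the TMTP.

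First I would note that the statement to prove, in the transitive case where $\Delta(x,y)=m(y)/m(x)$, is equivalent (after multiplying through by $m(x)$ and rearranging) to the balance identity
\[
\sum_{z\sim y} c(y,z)\bigl[m(z)-m(y)\bigr] \;=\; 0, \qquad y\in V,
\]
where the sum is taken with multiplicity if there are multiple edges. I would then use that $\Gamma$ acts transitively: given $y$, pick $\gamma\in\Gamma$ with $\gamma x = y$. The neighbors of $y$ are exactly $\{\gamma w : w\sim x\}$ (with multiplicity), $\Gamma$-invariance of $c$ gives $c(\gamma x,\gamma w)=c(x,w)$, and Lemma \ref{lem:haar} gives $m(\gamma w)/m(\gamma x)=m(w)/m(x)$. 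Substituting yields
\[
\sum_{z\sim y} c(y,z)\bigl[m(z)-m(y)\bigr] \;=\; \frac{m(y)}{m(x)}\sum_{w\sim x} c(x,w)\bigl[m(w)-m(x)\bigr],
\]
so it suffices to establish the identity at one (arbitrary) reference point $x$.

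For this I would apply TMTP (Proposition \ref{prop:TMTP}) with $L=1$, $a_1=1$, the deterministic root $\rho = x$, and the transport
\[
f(u,v) \;=\; \mathbf{1}_{\{u\sim v\}}\sum_{e:\, u\text{--}v} c(e),
\]
where the inner sum runs over all (parallel) edges joining $u$ and $v$. This $f$ is diagonally $\Gamma$-invariant precisely because $c$ is. TMTP then gives
\[
\sum_{v\sim x} c(x,v) \;=\; \sum_{v\sim x} c(v,x)\,\Delta(x,v) \;=\; \frac{1}{m(x)}\sum_{v\sim x} c(x,v)\,m(v),
\]
and multiplying by $m(x)$ produces exactly the required one-point identity.

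I do not expect a genuine obstacle here: in the transitive case the modular function is $\Gamma$-equivariant up to the multiplicative factor that TMTP is built to absorb, and the proof essentially writes itself once one chooses $f=c$ as the transport. The only points requiring care are (i) treating parallel edges correctly in every sum, and (ii) not accidentally invoking $\Gamma$-invariance of $m$ (which of course fails in the nonunimodular setting), using instead only the ratio-invariance supplied by Lemma \ref{lem:haar}.
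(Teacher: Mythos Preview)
Your proposal is correct and follows essentially the same route as the paper: define the transport $f(u,v)=c(u,v)\mathbf{1}_{\{u\sim v\}}$ and apply TMTP at a single representative vertex to obtain $\sum_{z\sim y}c(y,z)=\sum_{z\sim y}c(y,z)\,m(z)/m(y)$. Your version is slightly more explicit in two places---the reduction to one vertex via transitivity and Lemma~\ref{lem:haar}, and the handling of parallel edges---but the idea and the key step are identical.
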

\begin{proof}
	Let  $c:E\rightarrow(0,\infty)$ be a $\Gamma$-invariant conductance. It suffices to show for some fixed $y\in V$, one has
	\[
	\sum_{z\sim y}c(z,y)m(z)=\sum_{z\sim y}{c(z,y)m(y)}.
	\]
	Define $f:V^2\rightarrow[0,\infty]$ by $f(u,v)=c(u,v)\mathbf{1}_{\{u\sim v\}}$. Then $f$ is invariant under $\Gamma$-diagonal action since $c$ is $\Gamma$-invariant. Suppose $\mathcal{O}=\{y\}$ and then the random vertex $\rho$ with law $\mu$ is just $y$.   By TMTP one has 
	\[
	\sum_{z\in V}{f(\rho,z)}=\sum_{z\in V}{f(z,\rho)}\Delta(\rho,z).
	\]
	That is just $\sum_{z\sim y}c(y,z)=\sum_{z\sim y}c(y,z)\frac{m(z)}{m(y)}$ and we are done.
\end{proof}

\section{ Some Properties of Heavy Percolation Clusters}

In this section we review some important properties for heavy percolation clusters. Some of their proofs  are also typical applications of TMTP. We begin with a definition of level set.

\begin{definition}
	Suppose $G$ is a connected, locally finite graph, and $\Gamma\subset\textnormal{Aut}(G)$ is closed and quasi-transitive. A \notion{level}
	 of $(G,\Gamma)$ is a maximal set $X$ of vertices such that for any $x,y\in X$, $|\Gamma_xy|=|\Gamma_yx|$. Or equivalently, a level is a maximal set $X$ of vertices such that for any   $x,y\in X$, $m(x)=m(y)$. 
\end{definition}
Note the level set depends on the subgroup $\Gamma$. In the following without explicitly mention we will always fix some subgroup $\Gamma$. 

The next proposition was implicitly used in the proof of Corollary 5.10 in \cite{Timar2006} and we provide its proof for completeness.
\begin{proposition}\label{prop: level set is infinite}
	Suppose $G$ is a connected, locally finite graph, and $\Gamma\subset\textnormal{Aut}(G)$ is closed, quasi-transitive and nonunimodular, then every level is an infinite set.
\end{proposition}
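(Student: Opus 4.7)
The plan is to argue by contradiction: I will suppose some level $X$ is finite, fix $x\in X$, and derive a contradiction by showing that $G$ would then have polynomial growth even though $\Gamma$ is nonunimodular.

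First, I would identify the key algebraic object: the kernel $\Gamma^0 := \ker(\Delta_\Gamma)$ of the modular function $\Delta_\Gamma\colon\Gamma\to(0,\infty)$, which by Lemma~\ref{lem:haar} satisfies $\Delta_\Gamma(\gamma) = m(\gamma v)/m(v)$ for every $v\in V$. Every stabilizer $\Gamma_v$ is contained in $\Gamma^0$, and any $\gamma\in\Gamma^0$ preserves $m$-values, so $\Gamma^0\cdot x \subseteq X$ and therefore $[\Gamma^0:\Gamma_x] \le |X| < \infty$. Since $\Gamma_x$ is a compact open subgroup of $\Gamma$, it follows that $\Gamma^0$ is a finite union of compact cosets of $\Gamma_x$, and hence $\Gamma^0$ is compact.

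Next, I would leverage compactness of $\Gamma^0$ to bound the growth of $G$. By normality of $\Gamma^0$ and conjugation, $[\Gamma^0:\Gamma_v]$ is constant on each $\Gamma$-orbit, so every $\Gamma^0$-orbit on $V$ has size at most $N := \max_i [\Gamma^0:\Gamma_{o_i}] < \infty$. By quasi-transitivity and local finiteness the set $\mathcal{R}$ of ratios $m(v)/m(u)$ over adjacent pairs $u\sim v$ is finite; thus the subgroup $\langle\mathcal{R}\rangle \subseteq (0,\infty)$ that it generates is a finitely generated torsion-free abelian group, isomorphic to $\mathbb{Z}^k$ for some $k\ge 1$ (with $k\ge 1$ forced by nonunimodularity of $\Gamma$, since $\textnormal{Im}(\Delta_\Gamma)\subseteq\langle\mathcal{R}\rangle$ is nontrivial). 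For any $v\in B(x,r)$, $m(v)/m(x)$ is a product of at most $r$ elements of $\mathcal{R}$, hence lies in the word-ball of radius $r$ in $\mathbb{Z}^k$, which has cardinality $O(r^k)$. Since each attained value of $m$ is shared by at most $L\cdot N$ vertices of $V$, this yields $|B(x,r)|=O(r^k)$, i.e., $G$ has polynomial growth.

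Finally, polynomial growth implies $G$ is amenable, and by Trofimov's structure theorem~\cite{Trofimov1985} every closed subgroup of $\textnormal{Aut}(G)$ acting quasi-transitively on such a graph is virtually nilpotent, in particular unimodular, contradicting the hypothesis on $\Gamma$. The main difficulty I anticipate is the polynomial growth bound in the second step: one has to carefully recognize that the collection of heights $\{m(v):v\in B(x,r)\}$ embeds into a word-ball of radius $r$ inside $\mathbb{Z}^k$, and then account for the bounded multiplicity $L\cdot N$ with which each height can be attained. Once that is in place, the contradiction with nonunimodularity via Trofimov (equivalently via Soardi--Woess together with amenability) is routine.
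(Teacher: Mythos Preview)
Your argument is correct and takes a genuinely different route from the paper's. The paper argues directly via the tilted mass-transport principle: given any $M$, nonunimodularity supplies vertices $x_0,y_0$ with $M_0:=m(y_0)/m(x_0)>LM$, and applying TMTP to $f(x,y)=\mathbf{1}_{\{d(x,y)=d(x_0,y_0),\ m(y)/m(x)=M_0\}}$ forces at least $M_0/L>M$ vertices to share a common $m$-value; a separate elementary step (using that $\gamma L_x=L_{\gamma x}$) then propagates infiniteness from one level to all. Your approach instead extracts structure from a hypothetical finite level: the unimodular kernel $\Gamma^0$ becomes compact, which uniformly bounds \emph{every} $\Gamma^0$-orbit and hence every level (since a level meets each $\Gamma$-orbit in a single $\Gamma^0$-orbit), and then the finitely generated height group $\langle\mathcal{R}\rangle\cong\mathbb{Z}^k$ yields polynomial growth of $G$; the contradiction comes from Soardi--Woess \cite{SW1990}, as amenable quasi-transitive graphs admit only unimodular quasi-transitive groups. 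The paper's proof is more self-contained and uses only the TMTP machinery already set up, while yours invokes an outside structure theorem but gives a stronger intermediate conclusion (a single finite level forces polynomial growth of the entire graph) and ties the statement cleanly to the amenability/unimodularity dichotomy. One minor caution: your appeal to Trofimov for ``every closed quasi-transitive subgroup is virtually nilpotent'' is a bit loose as stated; the Soardi--Woess route you mention parenthetically is the clean way to close the argument.
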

\begin{proof}
	Let $L_x:=\{y\in V: m(y)=m(x)\}$ denote the level set containing $x$. 
	By  Lemma \ref{lem:haar} one has
	\begin{equation}\label{eq:bij}
		L_{\gamma x}=\gamma L_x:=\{\gamma y:y\in L_x\},\ \forall \gamma\in \Gamma
	\end{equation}
	Choose a complete set of representatives $\mathcal{O}=\{o_1,\ldots,o_L\}$ for the orbit under $\Gamma$. Denote $L_i:=L_{o_i}$,  let $n_i=|L_i|$ denote the cardinality of $L_i$ and define $N=\max\{n_1,\ldots,n_L \}$.
	
	First we show if there exists one index $i\in\{1,\ldots,L\}$ such that $n_i=\infty$, then $n_j=\infty$ for every $ j\in\{ 1,\ldots,L \}$. 
	Without loss of generality we assume $n_1=\infty$, note $L_1=\bigcup_{j=1}^L L_1\cap \Gamma o_j$, whence there exists at least one $j\in\{ 1,\ldots,L \}$ such that $|L_1\cap \Gamma o_j|=\infty$. Thus there exists an infinite set $\{\gamma_k:k\in\mathbb{N}\}\subset\Gamma$ such that $m(o_1)=m(\gamma_k o_j), \forall k\in\mathbb{N}$ and $\{\gamma_k o_j:k\in\mathbb{N} \}$ is an infinite set. Then  using Lemma \ref{lem:haar} again for every $i\in\{1,\ldots,L\}$, one has that  $m(\gamma_0 o_i)=m(\gamma_1 o_i)=m(\gamma_2 o_i)=\ldots$. Since $\gamma_k$ preserves the graph distance,  $\{\gamma_k o_i:k\in\mathbb{N} \}$ is also an infinite set. By the fact that  $L_{\gamma_0 o_i}\supset\{\gamma_k o_i:k\in\mathbb{N} \}$ one has that $L_{\gamma o_i}$ is also an infinite set, whence by \eqref{eq:bij} $n_i=\infty$ for every $i\in\{ 1,\ldots,L\}$.  
	
	Now by the above result and \eqref{eq:bij} it suffices to show $N=\infty$. For any $M>0$, since $\Gamma$ is nonunimodular, from the proof of  Lemma \ref{lem:unicre} one has that there exists some $x_0,y_0\in V$ such that $M_0:=\frac{m(y_0)}{m(x_0)}>LM$.  Define $f:V^2\rightarrow[0,\infty]$ as 
	$$f(x,y)=\mathbf{1}_{\{d(x,y)=d(x_0,y_0),\frac{m(y)}{m(x)}=M_0\}}.$$ 
	Obviously $f$ is $\Gamma$-diagonally invariant.   Take  $a=(a_1,\ldots,a_L)$ such that $a_i=\frac{1}{L},\sum_{i=1}^{L}a_i=1$, then $\Delta_{\Gamma,a}(x,y)=\frac{m(y)}{m(x)}$. 
	Let $k_y$ denote  the number of vertices $x$ such that $d(x,y)=d(x_0,y_0)$ and $\frac{m(y)}{m(x)}=M_0$, and $k:=\max\{k_{o_i}: o_i\in \mathcal{O}\}$. 
	Applying the tilted mass transport principle one has 
	\[
	\frac{1}{L}\leq \sum_{i=1}^{L}\frac{1}{L}\sum_{x\in V}{f(o_i,x)}=\sum_{i=1}^{L}\frac{1}{L}\sum_{x\in V}{f(x,o_i)}\frac{1}{M_0}\leq \frac{k}{M_0}.
	\]
	Thus $k\geq \frac{M_0}{L}>M$. Suppose $k=k_{o_i}$, then this means there exists at least $k>M$ vertices $x$ such that $\frac{m(o_i)}{m(x)}=M_0$, in particular all these $k$ vertices belong to the same level set, whence $N\geq k>M$. Since this is true for arbitrary $M$, one has $N=\infty$. 
\end{proof}

The following proposition was mentioned and used in \cite{Timar2006} without a proof, we provide its proof for the reader's convenience and later use. For an automorphism $\gamma$ and a level $A$, if $\gamma A=A$, then we say $\gamma$ fixes the level $A$.
\begin{proposition}\label{prop:quasi-transitive for finite union of levels}
	Suppose $G$ is a connected, locally finite graph, and $\Gamma\subset\textnormal{Aut}(G)$ is closed, quasi-transitive and nonunimodular, let $\Gamma'\subset \Gamma $ be the subgroup that fixes a level (hence fixes every level). Suppose $H$ is a connected component of some finite union of levels and $\Gamma_H'$ is the subgroup of $\Gamma'$ that preserves $H$. Then $\Gamma_H'$  acts quasi-transitively on $H$, and it is closed and unimodular. 
\end{proposition}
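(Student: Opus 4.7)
The plan is to establish quasi-transitivity, unimodularity, and closedness of $\Gamma_H'$, in that order, with everything pivoting on one elementary observation: for every $x\in V(H)$, one has $\Gamma_{H,x}'=\Gamma_x$, where $\Gamma_{H,x}'$ denotes the stabilizer of $x$ in $\Gamma_H'$. Indeed, any $\gamma\in\Gamma_x$ satisfies $m(\gamma x)=m(x)$, and Lemma \ref{lem:haar} then upgrades this to $m(\gamma v)=m(v)$ for every $v\in V$, placing $\gamma$ in $\Gamma'$. Since $\gamma\in\Gamma'$ preserves every level and hence the finite union underlying $H$, the image $\gamma H$ is again a connected component of that union; it contains $\gamma x=x\in V(H)$, so two connected components sharing a vertex must coincide and $\gamma H=H$, giving $\gamma\in\Gamma_H'$. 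I expect this connected-component argument to be the crux: it is where the finite-union and single-component hypotheses on $H$ are really used, and without it the rest of the proof does not get off the ground.

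With the identification $\Gamma_{H,x}'=\Gamma_x$ in hand, I would describe the $\Gamma_H'$-orbits on $V(H)$ explicitly. Given $x,y\in V(H)$ in a common $\Gamma$-orbit and a common level, pick any $\gamma\in\Gamma$ with $\gamma x=y$; then $m(\gamma x)=m(y)=m(x)$ gives $\gamma\in\Gamma'$, and repeating the connected-component argument at $y\in\gamma H\cap H$ forces $\gamma H=H$, so $\gamma\in\Gamma_H'$. The converse inclusion is trivial from $\Gamma_H'\subset\Gamma'\subset\Gamma$, so the $\Gamma_H'$-orbits on $V(H)$ are exactly the nonempty sets of the form $\Gamma o_i\cap L\cap V(H)$, where $o_i$ ranges over the finite list of $\Gamma$-orbit representatives in $\mathcal{O}$ and $L$ over the finitely many levels in the union. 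There are only finitely many such pairs $(o_i,L)$, which proves quasi-transitivity.

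For unimodularity I would feed the data into the Trofimov criterion (Proposition \ref{prop:unicre}) applied to $\Gamma_H'$. If $x,y$ share a $\Gamma_H'$-orbit then $m(x)=m(y)$; since $\Gamma_{H,x}'=\Gamma_x$ and $\Gamma_x\subset\Gamma_H'$ preserves $V(H)$, the orbit $\Gamma_x y$ lies in $V(H)$, so $|\Gamma_{H,x}'y|=|\Gamma_x y|$, with the symmetric identity obtained by swapping $x$ and $y$. Lemma \ref{lem:haar} converts $m(x)=m(y)$ into $|\Gamma_x y|=|\Gamma_y x|$, and Proposition \ref{prop:unicre} then yields unimodularity of $\Gamma_H'$. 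Closedness is routine: $\Gamma'$ is the intersection over $x\in V$ of the clopen conditions $\{\gamma\in\Gamma:\gamma x\in L_x\}$ in the pointwise-convergence topology, and $\Gamma_H'$ is the further intersection with the closed subgroup condition $\{\gamma:\gamma V(H)=V(H)\}$.
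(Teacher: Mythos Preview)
Your proof is correct and follows essentially the same approach as the paper: both hinge on the identification $(\Gamma_H')_x=\Gamma_x$ for $x\in V(H)$, derived via Lemma~\ref{lem:haar} and the connected-component argument, and then parlay this into quasi-transitivity (at most $Ln$ orbits) and closedness in the same way. The one minor difference is in the unimodularity step: the paper observes that $m$ restricts to a nonzero Haar measure on $\Gamma_H'$ and then invokes Lemma~\ref{lem:unicre} (finiteness of $\{m(\Gamma_x):x\in H\}$), whereas you apply Trofimov's orbital criterion (Proposition~\ref{prop:unicre}) directly via $|\Gamma_x y|=|\Gamma_y x|$ from $m(x)=m(y)$; your route is arguably cleaner since it sidesteps the check that the restricted measure is indeed a Haar measure on the subgroup.
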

\begin{proof}
	\textit{Step 1}:  First we show that for any $\gamma\in \Gamma$, if there exists $x\in V(G)$ such that $m(x)=m(\gamma x)$, then $\gamma A=A$ for every level $A$. 
	 Actually this is immediate from Lemma \ref{lem:haar}:
	\[
	m(x)=m(\gamma x)\textnormal{ and }\frac{m(x)}{m(y)}=\frac{m(\gamma x)}{m(\gamma y)}\Rightarrow m(y)=m(\gamma y).
	\]
	In particular one has $\Gamma_x\subset \Gamma',\forall x\in V$. 
	
	\textit{Step 2}:
	Suppose $L_{x_i},i=1,\ldots,n$ are $n$ distinct levels and $G_n:=\cup_{i=1}^{n}{L_{x_i}}$ be the union of these $n$ levels, viewed as the subgraph induced by the vertices in these levels. $G_n$ might be disconnected, however it has finitely many types of connected components up to isomorphisms. Indeed, for any $y_1\neq y_2\in L_{x_i}\cap \Gamma o_j$, there exists some $\gamma_1,\gamma_2\in\Gamma$ such that $y_1=\gamma_1o_j,y_2=\gamma_2o_j$ and $m(x)=m(y_1)=m(y_2)$. Thus $\gamma_2\gamma_1^{-1}y_1=y_2$. Since $m(y_1)=m(y_2)$, $\gamma_2\gamma_1^{-1}\in\Gamma'$ and is an isomorphism from the connected component of $y_1$ to the one of $y_2$. In particular there are at most $Ln$ types of connected components up to isomorphism, where $L$ is the number of orbits for the action of $\Gamma$ on $G$.
	
	\textit{Step 3}:
	For any connected component $H\subset G_n$, let $\Gamma'_H$ be the subgroup of $\Gamma'$ consisting of those maps which fix this component $H$. In particular, $\Gamma'_x=\Gamma_x\subset \Gamma'_H,\forall x\in H$. We now show that $\Gamma'_H$ acts on $H$ quasi-transitively and it is closed and unimodular. Notice $m$ is also a nonzero Haar measure restricted on $\Gamma'_H$ and $(\Gamma'_H)_x=\Gamma_x$. Also $m(\Gamma_x)$ is bounded on $H$ since $G_n$ has only finitely many levels. Therefore $\Gamma'_H$ is unimodular by Lemma \ref{lem:unicre}. 
	
	It is easy to see that $\Gamma'$ is closed. Indeed for any $\gamma\notin\Gamma'$, there exists $x\in V$ such that $m(\gamma x)\neq m(x)$. Therefore as an open neighborhood of $\gamma$, the set $\{\beta:\beta(x)=\gamma(x)\}$ is in the complement of $\Gamma'$, whence $\Gamma'$ is closed. Similarly one can show that $\Gamma'_H$ is also closed. 
	
	For any $y_1\neq y_2\in H\cap L_{x_i}\cap \Gamma o_j$, as in step 2 there exists some $\gamma\in\Gamma'$ such that $\gamma y_1=y_2$. Since $\gamma$ preserves graph distance, it maps the connected component of $y_1$ to the connected component of $y_2$, i.e. $\gamma$ maps $H$ to $H$, whence $\gamma\in\Gamma'_H$. Therefore under $\Gamma'_H$ there are at most $Ln$ different orbits for $H$ (here $L$ denotes the number of orbits for $\Gamma$ acting on $G$), whence 
	$\Gamma'_H$  acts quasi-transitively on $H$.
\end{proof}

The following proposition is well-known (\cite{Timar2006},\cite{Tom2017}) and its proof will be omitted. We stress that one needs only the assumption of $\Gamma$-invariance. 
\begin{proposition}\label{prop:unique cluster is heavy}
	Suppose $G$ is a connected, locally finite graph, and $\Gamma\subset\textnormal{Aut}(G)$ is closed, quasi-transitive and nonunimodular, let $(\mathbf{P},\omega)$ be a $\Gamma$-invariant percolation on $G$. If $\mathbf{P}$-a.s.\ there is a unique infinite cluster, then the unique infinite cluster must be heavy. 
\end{proposition}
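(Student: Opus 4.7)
The plan is a proof by contradiction using the tilted mass transport principle (Proposition \ref{prop:TMTP}). Suppose, toward a contradiction, that with positive probability the a.s.\ unique infinite cluster is $\Gamma$-light. Let $a=(1/L,\ldots,1/L)$ be the uniform weights on the orbit representatives $\mathcal{O}=\{o_1,\ldots,o_L\}$, so that $\Delta_{\Gamma,a}(x,y)=m(y)/m(x)$, and let $\rho$ be sampled from $\mathcal{O}$ according to $a$, independently of $\omega$. Writing $M_\omega(x):=\sum_{z\in C_\omega(x)} m(z)$, the function I propose to transport is
\[
f(x,y,\omega) \;:=\; \mathbf{1}\bigl[y\in C_\omega(x),\,|C_\omega(x)|=\infty\bigr]\,\frac{1}{M_\omega(x)},
\]
with the convention $1/\infty:=0$. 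After averaging over $\omega$, this is $\Gamma$-diagonally invariant by $\Gamma$-invariance of $\mathbf{P}$ together with Lemma \ref{lem:haar}, so TMTP applies.

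For the left-hand side, on the positive-probability event that $\rho$ lies in a light infinite cluster one has $|C_\omega(\rho)|=\infty$ and $M_\omega(\rho)\in(0,\infty)$, whence $\sum_y f(\rho,y,\omega)=|C_\omega(\rho)|/M_\omega(\rho)=+\infty$; the LHS is therefore $+\infty$. For the right-hand side, the identity $\rho\in C_\omega(y)\Rightarrow C_\omega(y)=C_\omega(\rho)$ gives $M_\omega(y)=M_\omega(\rho)$, and hence
\[
\sum_{y\in V} f(y,\rho,\omega)\,\Delta_{\Gamma,a}(\rho,y) \;=\; \frac{\mathbf{1}[|C_\omega(\rho)|=\infty]}{M_\omega(\rho)\,m(\rho)}\sum_{y\in C_\omega(\rho)} m(y) \;\le\; \frac{1}{m(\rho)}.
\]
Taking expectation gives RHS $\le L^{-1}\sum_{i=1}^{L} m(o_i)^{-1}<\infty$, contradicting LHS $=+\infty$ and forcing the unique infinite cluster to be heavy almost surely.

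The one point that requires care, and which dictates the particular form of $f$, is the cancellation of $M_\omega(\rho)$ in the telescoping ratio above: the normaliser $1/M_\omega(x)$ is exactly what makes the transported mass integrable on the receiving side, while the contradiction hypothesis of lightness is exactly what ensures this normaliser is a.s.\ finite and so the sending side still piles up an infinite amount of mass at $\rho$. The remaining verifications---that $m(o_i)\in(0,\infty)$ because vertex stabilisers in $\Gamma\subset\mathrm{Aut}(G)$ of a locally finite graph are compact open subgroups, that $\mathcal{O}$ is finite by quasi-transitivity, and that $\mathbb{P}[|C_\omega(\rho)|=\infty]>0$ follows from quasi-transitivity and the a.s.\ existence of the unique infinite cluster---are routine.
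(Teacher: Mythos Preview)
Your transport function is not $\Gamma$-diagonally invariant, so TMTP does not apply. The quantity $M_\omega(x)=\sum_{z\in C_\omega(x)} m(z)$ is \emph{not} preserved by the diagonal action: since $C_{\gamma\omega}(\gamma x)=\gamma C_\omega(x)$ and, by Lemma~\ref{lem:haar}, $m(\gamma z)=\frac{m(\gamma x)}{m(x)}\,m(z)$ for every $z$, one gets
\[
M_{\gamma\omega}(\gamma x)=\sum_{z\in C_\omega(x)} m(\gamma z)=\frac{m(\gamma x)}{m(x)}\,M_\omega(x).
\]
Hence $f(\gamma x,\gamma y,\gamma\omega)=\frac{m(x)}{m(\gamma x)}\,f(x,y,\omega)$, and after averaging over $\omega$ you obtain $F(\gamma x,\gamma y)=\frac{m(x)}{m(\gamma x)}F(x,y)$, which is genuinely not diagonally invariant in the nonunimodular case. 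Your very nice cancellation on the RHS relied on dividing by the \emph{non-invariant} normaliser $M_\omega(x)$; if you repair invariance by using $m(x)/M_\omega(x)$ or $m(y)/M_\omega(x)$ instead, the RHS computation no longer collapses to something bounded (you pick up $\sum_{y\in C_\omega(\rho)} m(y)^2$ or the two sides become equal), and the argument stalls.

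The paper omits the proof and cites \cite{Timar2006,Tom2017}; the standard route is a short scaling argument rather than TMTP. The computation above shows that for every $\gamma\in\Gamma$ the ratio $c_\gamma:=m(\gamma v)/m(v)$ is independent of $v$, and nonunimodularity gives some $\gamma$ with $c_\gamma>1$. If $C_\infty$ denotes the a.s.\ unique infinite cluster, then $\gamma C_\infty$ is the unique infinite cluster of $\gamma\omega$, so by $\Gamma$-invariance of $\mathbf{P}$ one has $M(C_\infty)\stackrel{d}{=}M(\gamma C_\infty)=c_\gamma\,M(C_\infty)$. Iterating, $M(C_\infty)\stackrel{d}{=}c_\gamma^n\,M(C_\infty)$ for all $n\ge 1$, which forces $\mathbf{P}[M(C_\infty)\in(0,\infty)]=0$; since $M(C_\infty)>0$ always, $C_\infty$ is heavy a.s.
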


\begin{definition}
	H\"{a}ggstr\"{o}m, Peres and Schonmann \cite{HPS1999} introduced the heaviness transition:
	\[
	p_h(G,\Gamma):=\inf\{p\in[0,1]: \mathbf{P}_p\textnormal{-a.s. there exists a heavy cluster }   \}.
	\]
	In the case $\Gamma=\textnormal{Aut}(G)$, one denote $p_h=p_h(G)=p_h(G,\textnormal{Aut}(G))$. 
\end{definition}
Note that if $\Gamma$ is closed, quasi-transitive and unimodular, $p_h(G,\Gamma)=p_c$. Using the canonical coupling one can see that for all $p>p_h$, $\mathbf{P}_p$-a.s. there exists a heavy cluster. By Theorem \ref{thm:robust} a.s.\ all infinite clusters are heavy for every $p>p_h(G,\Gamma)$.
An immediate consequence of  Proposition \ref{prop:unique cluster is heavy} is that $p_h(G,\Gamma)\leq p_u$. Hutchcroft \cite{Tom2017} proved that $p_c(G)<p_h(G,\Gamma)$ if $\Gamma\subset \textnormal{Aut}(G)$ is closed, quasi-transitive and nonunimodular.

The next proposition is important for later use. It is proved for the transitive case and the proof can be easily adapted to quasi-transitive case. 
\begin{proposition}[Corollary 5.6 of \cite{Timar2006}]\label{prop:finite levels}
	Suppose $G$ is a connected, locally finite graph, and $\Gamma\subset\textnormal{Aut}(G)$ is quasi-transitive and nonunimodular. Let $(\mathbf{P}_p,\omega)$ be a Bernoulli$(p)$ percolation on $G$. If $\mathbf{P}_p$-a.s. there are infinitely many heavy clusters, then there exists some finite union of levels $L_{x_i},i=1,\ldots,n$ such that there exists a connected component $H_n$ of $G_n:=\cup_{i=1}^{n}L_{x_i}$ on which Bernoulli$(p)$ percolation induces infinitely many infinite open components. 
\end{proposition}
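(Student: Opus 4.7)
The plan is to argue by contradiction. Assume $\mathbf{P}_p$-a.s.\ there are infinitely many heavy clusters, yet for every finite union of levels $G_n$ and every connected component $H$ of $G_n$, the restricted Bernoulli percolation on $H$ has only finitely many infinite open components a.s. By Proposition \ref{prop:quasi-transitive for finite union of levels} each such $H$ is quasi-transitive under a closed unimodular subgroup $\Gamma'_H$, so a Burton--Keane style argument applied to the $\Gamma'_H$-invariant, insertion-tolerant restricted percolation on $H$ refines the hypothesis to ``at most one infinite component on $H$'' almost surely.

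The first concrete step is to enumerate the (countably many) distinct levels as $L_1,L_2,\ldots$ and set $G_n:=L_1\cup\cdots\cup L_n$, an exhaustion of $V$. For each fixed $n$, the connected components of $G_n$ fall into only finitely many $\Gamma$-orbit types (by a short argument as in Step 2 of Proposition \ref{prop:quasi-transitive for finite union of levels}). This is the structural input that makes pigeonholing possible.

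The second step is a tilted mass transport computation. I sample a random root $\rho\sim\mu$; with positive probability $\rho$ lies in a heavy cluster $C_\rho$, and its heaviness is recorded by
\[
\sum_{x\in C_\rho} m(x)\;=\;\sum_{L}m(L)\,|C_\rho\cap L|\;=\;\infty.
\]
The idea is to use Proposition \ref{prop:TMTP} with a transport rule that moves the weight $m(x)$ at each $x\in C_\rho\cap V(G_n)$ to a designated representative of the connected component of $x$ in the induced graph $(V(G_n),E(G_n)\cap\omega)$. The contradiction hypothesis says that each connected component $H$ of $G_n$ receives strictly positive (in fact infinite) transported mass from \emph{at most one} heavy cluster of $\omega$. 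Because the $H$'s of $G_n$ come in only finitely many $\Gamma$-orbit types, the TMTP should then bound the expected ``heaviness per heavy cluster captured inside $G_n$'' by something finite. Letting $n\to\infty$, combined with the a.s.\ infinitude of heavy clusters and with the heaviness identity, yields a contradiction.

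The main obstacle I anticipate is that naive counting accommodates infinitely many heavy clusters across infinitely many different connected components of $G_n$, so one cannot simply pigeonhole over $n$. The trick should be to fix a single $n$ large enough so that a positive fraction of the heavy mass lies inside $G_n$ (possible since the heaviness decomposes over levels and there are only countably many levels), and then apply the TMTP \emph{only} to that $n$. Because $G_n$ has finitely many orbit types of components and each component, by the contradiction assumption, accommodates at most one infinite induced component, one forces many distinct heavy clusters to pile into a single $H_n$; producing infinitely many infinite induced components there yields the required contradiction and hence the proposition.
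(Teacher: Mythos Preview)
The paper does not give its own proof of this proposition; it cites Tim\'ar's Corollary~5.6 and states that the transitive argument adapts to the quasi-transitive setting. Tim\'ar's route proceeds through his Lemmas~5.2--5.5: using insertion and deletion tolerance he first shows that a heavy cluster almost surely meets some level in infinitely many vertices (``width''), and then argues that a sufficiently thick slab already contains infinitely many infinite open components.

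Your proposal follows a different line, but as written it is not a proof. A minor point first: the reduction from ``finitely many'' to ``at most one'' infinite cluster in each $H$ is correct, but the reason is the Newman--Schulman $0/1/\infty$ trichotomy for translation-invariant insertion-tolerant percolation on quasi-transitive graphs, not Burton--Keane, which proves uniqueness and requires amenability---something $H$ need not have. The serious gap is the tilted mass-transport step. You never specify a $\Gamma$-diagonally invariant $f$, and the scheme you sketch cannot be made invariant: there is no $\Gamma$-equivariant way to select a single ``designated representative'' in an infinite open component of $H$. Moreover, your assertion that ``each $H$ receives mass from at most one heavy cluster of $\omega$'' is false under the contradiction hypothesis: several heavy clusters of $\omega$ can meet the same $H$, with all but one sitting in finite pieces of $\omega\cap H$, and those finite pieces still transport mass. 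Finally, the concluding pigeonhole---``many distinct heavy clusters pile into a single $H_n$''---is exactly the statement to be proved, and nothing in the preceding sketch forces it; the contradiction hypothesis is perfectly compatible with each heavy cluster occupying its own component of $G_n$, of which there are infinitely many. If you want a transport-based argument, you will likely still need Tim\'ar's intermediate ``width'' step (a heavy cluster hits some level infinitely often), which is where deletion tolerance is used, and then work directly on that level.
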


Suppose $\Gamma\subset \textnormal{Aut}(G)$ is quasi-transitive and nonunimodular. $m$ is a Haar measure on $\Gamma$. Let $N(p)$ denote the number of infinite clusters for Bernoulli ($p$) percolation on $G$. It is well known that $N(p)=0,1$ or $\infty$ a.s., see for example Theorem 7.5 of \cite{LP2016}. By Hutchcroft \cite{Tom2017}, one has $p_c<p_h(G,\Gamma)$. Thus $N(p_h(G,\Gamma))=1$ or $\infty$. We now clarify whether these clusters are heavy or light. 
\begin{proposition}\label{prop:light at p_h}
	Suppose $\Gamma\subset \textnormal{Aut}(G)$ is quasi-transitive and nonunimodular. If $N(p_h(G,\Gamma))=\infty$ a.s., then all these infinite clusters are light a.s. If $N(p_h(G,\Gamma))=1$ a.s., then the unique infinite cluster is heavy a.s. 
\end{proposition}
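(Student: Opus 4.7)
I would split on $N(p_h(G,\Gamma))\in\{1,\infty\}$. If $N(p_h(G,\Gamma))=1$ a.s., the conclusion is immediate from Proposition \ref{prop:unique cluster is heavy}, since Bernoulli$(p_h)$ percolation is automatically $\Gamma$-invariant.

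For $N(p_h(G,\Gamma))=\infty$ a.s.\ I would argue by contradiction. Observe first that being a heavy cluster is a \emph{robust} invariant property: for any infinite connected subgraph $C$ and any edge $e\in C$, the (at most two) components of $C\setminus\{e\}$ share the vertex set of $C$ and hence partition its total $m$-mass, so $C$ is heavy iff some infinite component of $C\setminus\{e\}$ is heavy. Hence, if we assume for contradiction that $\mathbf{P}_{p_h}[\exists\text{ heavy cluster}]>0$, Theorem \ref{thm:robust} upgrades this to ``every infinite cluster is heavy a.s.'', giving infinitely many heavy clusters at $p_h$. I would then invoke Proposition \ref{prop:finite levels} at $p=p_h$ to obtain a finite union of levels $G_n=\bigcup_{i=1}^nL_{x_i}$ and a connected component $H_n\subset G_n$ on which Bernoulli$(p_h)$ percolation a.s.\ has infinitely many infinite clusters; by Proposition \ref{prop:quasi-transitive for finite union of levels}, $H_n$ is quasi-transitive under a closed unimodular subgroup of $\Gamma$. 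Since $m$ is bounded below on $H_n$ (it takes only finitely many values there), any infinite Bernoulli cluster inside $H_n$ has infinite $m$-mass and is therefore heavy when viewed as a sub-cluster of the corresponding Bernoulli cluster on $G$.

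The contradiction will come from exhibiting some $p'<p_h$ at which Bernoulli$(p')$ percolation on $G$ a.s.\ contains a heavy cluster. Concretely, I aim to show $p_c(H_n)<p_h$; then for any $p'\in(p_c(H_n),p_h)$, Bernoulli$(p')$ on $H_n$ a.s.\ has an infinite cluster, which by the previous step is heavy in $G$, violating the defining infimum property of $p_h$. The main obstacle is precisely this strict inequality $p_c(H_n)<p_h$ — equivalently, ruling out the degenerate scenario that the infinitely many infinite clusters on the unimodular quasi-transitive graph $H_n$ first appear exactly at its critical parameter. I expect this step will be handled by combining Hutchcroft's strict inequality $p_c(G)<p_h(G,\Gamma)$ with a careful choice (possibly a refinement) of the auxiliary subgraph $H_n$ produced by Proposition \ref{prop:finite levels}, ensuring there is genuine slack between $p_c(H_n)$ and $p_h$.
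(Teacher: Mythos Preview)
Your overall structure matches the paper's proof exactly: the $N=1$ case via Proposition~\ref{prop:unique cluster is heavy}, and for $N=\infty$ the contradiction via robustness (Theorem~\ref{thm:robust}), Proposition~\ref{prop:finite levels}, Proposition~\ref{prop:quasi-transitive for finite union of levels}, and the boundedness of $m$ on $H_n$. You have also correctly isolated the only nontrivial remaining step, namely ruling out $p_c(H_n)=p_h$.

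However, your proposed fix for that step --- invoking Hutchcroft's $p_c(G)<p_h(G,\Gamma)$ and refining the choice of $H_n$ --- is not what closes the gap, and it is not clear how it could. The paper's argument is both simpler and more direct, and in fact your own phrasing already points to it: you describe the bad scenario as ``infinitely many infinite clusters on the unimodular quasi-transitive graph $H_n$ appearing exactly at its critical parameter.'' That scenario is ruled out immediately by the no-percolation-at-$p_c$ theorem. Concretely, if $p_c(H_n)=p_h$, then Bernoulli$(p_c(H_n))$ percolation on $H_n$ has infinitely many infinite clusters; since $H_n$ is unimodular quasi-transitive, multiple infinite clusters force $H_n$ to be nonamenable, and for such graphs there are no infinite clusters at $p_c$ (\cite{BLPS1999b}, or \cite{Tom2016} via exponential growth). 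This is a direct contradiction, with no need to adjust $H_n$ or appeal to $p_c(G)<p_h(G,\Gamma)$.
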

\begin{proof}
	If $N(p_h(G,\Gamma))=1$, then the unique infinite cluster is heavy a.s. by Proposition \ref{prop:unique cluster is heavy}.
	
	If $N(p_h(G,\Gamma))=\infty$, we proceed by contradiction. Suppose there is at least one heavy infinite cluster, then by Theorem \ref{thm:robust} one has that all these infinite clusters are heavy a.s. Then by Proposition \ref{prop:finite levels} there exist  finitely many levels $L_{x_i},i=1,\ldots,n$ such that $G_n:=\cup_{i=1}^{n}L_{x_i}$ induces infinitely many infinite open components for Bernoulli percolation  at $p_h(G,\Gamma)$. 
	
	Define \[
	p_c(G_n):=\min\{p_c(H):H\textnormal{ is a connented component of }G_n \},
	\]
	where if $H$ is a finite connected component we set $p_c(H)=1$. The minimum is achieved because  there are finitely many types of connected components up to isomorphisms by Proposition \ref{prop:quasi-transitive for finite union of levels}.
	
	Since there are infinitely many infinite open components for Bernoulli percolation  at $p_h(G,\Gamma)$ on $G_n$, by Proposition \ref{prop:finite levels} there exists a connected component $H_n$ such that Bernoulli $p_h(G,\Gamma)$-percolation on $H_n$ has infinitely many infinite open components. In particular, $p_c(H_n)\leq p_h(G,\Gamma)$.
	
	First consider the case $p_c(H_n)=p_h(G,\Gamma)$. By Proposition \ref{prop:quasi-transitive for finite union of levels} the subgroup $\Gamma'_{H_n}$  is quasi-transitive on $H_n$ and unimodular. Since Bernoulli $p_h(G,\Gamma)$-percolation on $H_n$ has infinitely many infinite open components, $H_n$ must be nonamenable. By \cite{BLPS1999b} or \cite{Tom2016} there are no infinite clusters at $p_c(H_n)$, which contradicts $p_c(H_n)=p_h(G,\Gamma)$ and Bernoulli $p_h(G,\Gamma)$-percolation on $H_n$ has infinitely many infinite open components.
	
	If $p_c(H_n)<p_h(G,\Gamma)$, take some $p\in(p_c(H_n),p_h(G,\Gamma))$, then Bernoulli $p$-percolation has at least one infinite cluster on $H_n$ a.s. This infinite cluster must be heavy since $\{m(x):x\in H_n\}$ is bounded. Hence Bernoulli $p$-percolation on $G$ would also have at least one heavy cluster a.s., which contradicts with the definition of $p_h(G,\Gamma)$.
	
	Therefore if $N(p_h(G,\Gamma))=\infty$ a.s., then all these infinite clusters are light a.s.
\end{proof}

\begin{corollary}\label{cor:p_h equal limit of p_c}
	Suppose $\Gamma\subset \textnormal{Aut}(G)$ is quasi-transitive and nonunimodular. Suppose that $(G_n)_{n\in\mathbb{N}}$ is an increasing \textit{exhausting} sequence of finite union of levels in the sense that for each level $L_x$ there exists $N_x>0$  such that $L_x\subset G_n$ whenever $n\geq N_x$. Then $\lim_{n\rightarrow\infty}p_c(G_n)$ exists and
	\begin{equation}\label{eq:limit of p_c is at least p_h}
		\lim_{n\rightarrow\infty}p_c(G_n)\geq p_h(G,\Gamma).
	\end{equation}
	
	Moreover the following are equivalent:\\
	$(1)$ There exists some $p\in(0,1)$ such that Bernoulli $(p)$ percolation on $G$ has infinitely many heavy clusters;\\
	$(2)$   $p_h(G,\Gamma)<p_u$;\\
	$(3)$   $\lim_{n\rightarrow\infty}p_c(G_n)< p_u$.\\
	If 	$\lim_{n\rightarrow\infty}p_c(G_n)\leq  p_u$ (slightly weaker than the above three conditions), then 
	\[
	\lim_{n\rightarrow\infty}p_c(G_n)= p_h(G,\Gamma).
	\]
\end{corollary}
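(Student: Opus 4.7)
My plan splits into three stages: existence of the limit with the lower bound $\lim p_c(G_n) \geq p_h(G,\Gamma)$, the three-way equivalence, and the refined equality under the weaker hypothesis $\lim p_c(G_n) \leq p_u$.

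For the first stage, I would first check that $(p_c(G_n))$ is monotone non-increasing: if $H$ is an infinite component of $G_n$ and $H'$ is the component of $G_{n+1}$ containing $H$, then $H \subseteq H'$ forces $p_c(H') \leq p_c(H)$, and taking minima over components gives $p_c(G_{n+1}) \leq p_c(G_n)$; hence $\lim p_c(G_n)$ exists. For the lower bound, given any $p > \lim p_c(G_n)$, for large $n$ some infinite component $H$ of $G_n$ satisfies $p > p_c(H)$, so Bernoulli $p$-percolation on $H$ produces an infinite cluster a.s.; since $H$ lies in only finitely many levels, $m$ is bounded below on $H$ by a positive constant, so such a cluster is heavy in $G$, giving $p \geq p_h(G,\Gamma)$.

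For the equivalences, I would prove $(1) \Leftrightarrow (2)$ as follows. If some Bernoulli $p$-percolation has infinitely many heavy clusters, then $p \leq p_u$ (since $N(p)=\infty$) and $p > p_h$ (otherwise by Proposition \ref{prop:light at p_h} at $p_h$ all clusters would be light), so $p_h < p_u$; conversely, for $p\in(p_h,p_u)$, monotonicity of uniqueness together with $p > p_h \geq p_c$ gives $N(p)=\infty$, and since a heavy cluster exists, Theorem \ref{thm:robust} upgrades this to all infinite clusters being heavy. The implication $(3)\Rightarrow (2)$ follows immediately from the already-proved $\lim p_c(G_n) \geq p_h$. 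For $(2)\Rightarrow (3)$, I would pick $p \in (p_h,p_u)$, apply Proposition \ref{prop:finite levels} to obtain a finite union of levels $F$ and a component $H$ of $F$ with $p_c(H) \leq p$, and then note that by the exhausting assumption $F \subseteq G_N$ for some $N$, so the component of $G_N$ containing $H$ has $p_c$ at most $p_c(H)$, giving $\lim p_c(G_n) \leq p_c(G_N) \leq p < p_u$.

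For the last assertion, assume $\lim p_c(G_n) \leq p_u$. I would split into two cases. If $\lim p_c(G_n) = p_u$, then since $p_h \leq p_u$ always, the already-proved bound $\lim p_c(G_n) \geq p_h$ forces $p_h = p_u = \lim p_c(G_n)$. If $\lim p_c(G_n) < p_u$, then (3) and hence (2) holds, so $p_h < p_u$; assuming $\lim p_c(G_n) > p_h$ for contradiction, pick $p \in (p_h,\lim p_c(G_n))$ and repeat the $(2)\Rightarrow (3)$ argument: Proposition \ref{prop:finite levels} gives a finite union of levels with a component of $p_c$ at most $p$, and exhaustion yields $p_c(G_N) \leq p$ for large $N$, contradicting $p < \lim p_c(G_n)$. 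The one subtle point worth flagging is that Proposition \ref{prop:finite levels} requires \emph{infinitely many} heavy clusters rather than merely one; handling this correctly is exactly why $p$ is chosen strictly inside $(p_h,p_u)$ and Theorem \ref{thm:robust} is invoked to promote the single heavy cluster to all clusters being heavy.
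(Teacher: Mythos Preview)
Your argument is largely correct and tracks the paper's proof closely, but there is one genuine logical slip in the final stage. In the case $\lim_{n\to\infty} p_c(G_n)=p_u$, you claim that ``$p_h\leq p_u$ always'' together with ``$\lim p_c(G_n)\geq p_h$'' forces $p_h=p_u=\lim p_c(G_n)$. It does not: substituting the case hypothesis into the second inequality just gives $p_u\geq p_h$, which is the first inequality again. You have only shown $p_h\leq p_u$, not equality. The fix is immediate once you use the equivalence you already proved: $\lim p_c(G_n)=p_u$ means (3) fails, hence (2) fails, so $p_h\geq p_u$; combined with $p_h\leq p_u$ this gives $p_h=p_u=\lim p_c(G_n)$.

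Apart from this, your proof is correct and essentially the same as the paper's. Two small comparative remarks. First, the paper splits the final assertion on whether $p_h<p_u$ or $p_h=p_u$ rather than on $\lim p_c(G_n)$ versus $p_u$; this is slightly cleaner because in the paper's $(2)\Rightarrow(3)$ step, letting $p\downarrow p_h$ already yields the stronger conclusion $\lim p_c(G_n)\leq p_h$, which immediately closes the case $p_h<p_u$. Your $(2)\Rightarrow(3)$ argument contains this too (just take the infimum over $p\in(p_h,p_u)$), so your separate contradiction argument in the case $\lim p_c(G_n)<p_u$ is redundant. Second, you are actually more careful than the paper in one respect: you explicitly invoke the exhausting hypothesis to embed the finite union of levels produced by Proposition~\ref{prop:finite levels} into some $G_N$, whereas the paper silently identifies that finite union with one of the $G_n$.
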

\begin{proof}
	For any $p>p_c(G_n)$, Bernoulli $(p)$ percolation has at least one infinite cluster on $G_n$ a.s., which must be heavy. Hence Bernoulli $(p)$ percolation on $G$ would also have at least one heavy cluster a.s., whence $p\geq p_h(G,\Gamma)$. Therefore $p_c(G_n)\geq p_h(G,\Gamma)$ for all $n$. Note $p_c(G_n)$ is decreasing, thus the limit exists and satisfies \eqref{eq:limit of p_c is at least p_h}.
	
	$(1)\Rightarrow (2)$: Suppose $(1)$ holds but $(2)$ does not hold, then $p_h(G,\Gamma)=p_u$. For any $p<p_h(G,\Gamma)$ there is no heavy cluster a.s.\ by the definition of $p_h(G,\Gamma)$. At $p=p_h(G,\Gamma)$, if $N(p_h(G,\Gamma))=\infty$, then Proposition \ref{prop:light at p_h} yields all these infinite clusters are light a.s. If $N(p_h(G,\Gamma))=1$, then there exists only one heavy cluster a.s. For $p>p_h(G,\Gamma)=p_u$, there is a unique infinite cluster a.s.\ \cite{S1999}. Thus for all $p\in[0,1]$, there cannot exist infinitely many heavy clusters, contradicting  $(1)$. 
	
	$(2)\Rightarrow (3)$: For any $p\in(p_h(G,\Gamma),p_u)$, $N(p)=\infty$ since $p<p_u$. Since $p>p_h(G,\Gamma)$, almost surely all the infinite clusters are heavy by the definition of $p_h(G,\Gamma)$ and Theorem \ref{thm:robust}. 
	Then by Proposition \ref{prop:finite levels}
	there exists a finite union of levels $G_n$ such that some connected component $H_n$ of $G_n$ has the property that Bernoulli $(p)$ percolation on $H_n$ has infinitely many  infinite clusters. In particular $p\geq p_c(H_n)\geq p_c(G_n)\geq \lim_{n\rightarrow\infty}p_c(G_n)$.  Let $p\downarrow p_h(G,\Gamma)$ to obtain that 
	\begin{equation}\label{eq:4.3p_h geq limit}
		p_h(G,\Gamma)\geq \lim_{n\rightarrow\infty}p_c(G_n).
	\end{equation}
	Combining with $p_h(G,\Gamma)<p_u$ one has $(3)$. 
	
	$(3)\Rightarrow (1)$: By (\ref{eq:limit of p_c is at least p_h}) one has $p_h(G,\Gamma)<p_u$, whence for $p\in (p_h(G,\Gamma),p_u)$, Bernoulli $(p)$ percolation on $G$ has infinitely many heavy clusters.
	
	Now suppose	$\lim_{n\rightarrow\infty}p_c(G_n)\leq  p_u$. If $p_h(G,\Gamma)<p_u$, i.e. condition $(2)$ holds, then one has  (\ref{eq:4.3p_h geq limit}), this combining with (\ref{eq:limit of p_c is at least p_h})  yields $	\lim_{n\rightarrow\infty}p_c(G_n)= p_h(G,\Gamma)$. If $p_h(G,\Gamma)=p_u$ then by assumption $p_h(G,\Gamma)=p_u\geq \lim_{n\rightarrow\infty}p_c(G_n)$, this combining with (\ref{eq:limit of p_c is at least p_h}) also yields $	\lim_{n\rightarrow\infty}p_c(G_n)= p_h(G,\Gamma)$. 
\end{proof}

A necessary condition for $p_h(G,\Gamma)<p_u$ is provided by Tim\'{a}r \cite[Corollary 5.8]{Timar2006}, and Hutchcroft conjectured that it is also sufficient \cite[Conjecture 8.5]{Tom2017}. 
\begin{remark}
	The inequality in \eqref{eq:limit of p_c is at least p_h} can be strict. For example, consider the regular tree $\mathbb{T}_{d}$ with degree $d\geq 3$ and fix an end $\xi$ of $\mathbb{T}_{d}$. Let $\Gamma_\xi\subset \textnormal{Aut}(\mathbb{T}_{d})$ be the subgroup consisting of automorphisms that fixes this end $\xi$. Let $G=\mathbb{T}_{d}\times\mathbb{Z}$ and $\Gamma=\Gamma_\xi\times\textnormal{Aut}(\mathbb{Z})$. Then $\Gamma$ is transitive and nonunimodular. Notice $p_h(G,\Gamma)=p_u(G)<1$, where the first equality is due to Corollary 5.8 of \cite{Timar2006}. However, any union of finite consecutive  levels consists of infinitely many copies of a cartesian product of a finite tree and $\mathbb{Z}$, whence $\lim_{n\rightarrow\infty}p_c(G_n)=1> p_h(G,\Gamma)=p_u$ in this example. 
\end{remark}

\section{The ``biased" two-sided random walks}
We start with some notations from Lyons and Schramm  \cite{LS1999b}. 
Suppose $V$ is a countable infinite set and $\Gamma$ is a locally compact group acting on $V$ (on the left) and that all stabilizers of elements of $V$ have finite Haar measure. We also suppose that the quadruple $(\Xi,\mathcal{F},\mathbf{P},\Gamma)$ is  a measure-preserving dynamical system, namely $\Gamma$ acts measurably on the measure space $(\Xi,\mathcal{F},\mathbf{P})$ and  preserves the measure $\mathbf{P}$.

The space of trajectories of random walk is $V^{\mathbb{N}}$. Let $(\Xi,\mathcal{F})$ be a measurable space. Define the shift operator $\mathcal{S}:V^{\mathbb{N}}\rightarrow V^{\mathbb{N}}$ by
\[
(\mathcal{S}w)(n):=w(n+1),
\]
and let $\mathcal{S}$ act on $\Xi\times V^{\mathbb{N}}$ as 
\[
\mathcal{S}(\xi,w):=(\xi,\mathcal{S}w),\ \ \forall\  (\xi,w)\in \Xi\times V^{\mathbb{N}}.
\]
For $\gamma\in \Gamma$, we define its action on $\Xi\times V^{\mathbb{N}}$ by 
\[
\gamma(\xi,w):=(\gamma \xi,\gamma w),
\]
where $(\gamma w)(n):=\gamma(w(n))$.

Let $\mathcal{T}:V^{\mathbb{Z}}\rightarrow V^{\mathbb{Z}}$ be the natural extension of $\mathcal{S}$, namely,
\[
\mathcal{T}\widehat{w}(n)=\widehat{w}(n+1),\ \forall\ n\in\mathbb{Z},\widehat{w}\in V^{\mathbb{Z}},
\]
and as before let $\mathcal{T}$ act on $\{0,1\}^V\times V^{\mathbb{Z}}$ as $\mathcal{T}(\xi,\widehat{w})=(\xi,\mathcal{T}\widehat{w})$.

Define the projection maps $\pi:V^\mathbb{Z}\rightarrow V^\mathbb{N}$ as follows 
\[
\pi(\widehat{w})(n)=\widehat{w}(n),n\geq0,\forall \widehat{w}\in V^\mathbb{Z}, 
\]
and define $\pi^-:V^\mathbb{Z}\rightarrow V^\mathbb{N}$ as follows 
\[
\pi(\widehat{w})(n)=\widehat{w}(-n),n\geq0,\forall \widehat{w}\in V^\mathbb{Z}. 
\]

We call a measurable function $q:\Xi\times V\times V\rightarrow [0,1]$, written as $q:(\xi,x,y)\mapsto q_{\xi}(x,y)$, a random environment (from $\Xi$) if for all $\xi\in\Xi,x\in V$, we have $\sum_{y\in V}{q_{\xi}(x,y)}=1$. The natural action of $\Gamma$ on $q$ is given by $(\gamma q)(\xi,x,y):=q(\gamma^{-1}\xi,\gamma^{-1} x,\gamma^{-1} y)$. 
Given $x\in V$ and a measurable map $\xi\mapsto \nu_{\xi}(x)$ from $\Xi$ to $[0,\infty)$, let $\widehat{\mathbf{P}}_x$ denote the joint distribution on $\Xi\times V^{\mathbb{N}}$ of $\xi$ biased by $\nu_{\xi}(x)$ and the trajectory of the Markov chain determined by $q_\xi$ starting at $x$. Let $\mathcal{I}$ denote the $\sigma$-field of $\Gamma$-invariant events in $\Xi\times V^{\mathbb{N}}$. For examples one can refer to \cite{LS1999b} or see our application of the following general theorem:
\begin{theorem}[Theorem 3 of \cite{LS1999b}]\label{thm:key}
	Let $V$ be a countable set with a quasi-transitive action by a locally compact group $\Gamma$. Let $\{o_1,\ldots,o_L\}$ be a complete set of representatives of $\Gamma\backslash V$ and write $m_i=m(o_i)$. Let $(\Xi,\mathcal{F},\mathbf{P},\Gamma)$ be a measure-preserving dynamical system and $q$ be a $\Gamma$-invariant random environment from $\Xi$. Suppose that $\nu:(\xi,x)\mapsto \nu_{\xi}(x)$ is a $\Gamma$-invariant measurable mapping from $\Xi\times V\rightarrow [0,\infty)$ such that for each $\xi\in\Xi$,
	$x\mapsto m(x)\nu_{\xi}(x)$ is a stationary distribution for the Markov chain determined by $q_\xi$. Write
	\[
	\widehat{\mathbf{P}}:=\sum_{i=1}^{L}\widehat{\mathbf{P}}_{o_i}
	\]
	Then the restriction of $\widehat{\mathbf{P}}$ to the $\Gamma$-invariant $\sigma$-field $\mathcal{I}$ is an $\mathcal{S}$-invariant measure. If
	\[
	\sum_{i=1}^{L}\mathbf{E}[\nu_{\xi}(o_i)]=1
	\]
	then $\widehat{\mathbf{P}}$ is a probability measure. 
\end{theorem}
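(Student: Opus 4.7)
The plan is to establish $\mathcal{S}$-invariance of $\widehat{\mathbf{P}}|_{\mathcal{I}}$ by a one-step computation combining the Markov property of $q_\xi$ with the tilted mass transport principle; the probability-measure claim is an immediate unpacking of the definition, since $\widehat{\mathbf{P}}(\Xi\times V^{\mathbb{N}}) = \sum_i \widehat{\mathbf{P}}_{o_i}(\Xi\times V^{\mathbb{N}}) = \sum_i \mathbf{E}[\nu_{\xi}(o_i)]$. For $\mathcal{S}$-invariance, by a standard monotone-class approximation it suffices to prove
\[
\int \Phi\, d\widehat{\mathbf{P}} = \int \Phi \circ \mathcal{S}\, d\widehat{\mathbf{P}}
\]
for every bounded nonnegative $\Gamma$-invariant measurable $\Phi: \Xi\times V^{\mathbb{N}} \to \mathbb{R}$.

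First I would introduce $g(\xi, y) := \mathbf{E}_y^{q_\xi}[\Phi(\xi, \cdot)]$, which is $\Gamma$-invariant because $\Phi$ and $q$ are. Conditioning on the first step of the chain gives
\begin{align*}
\int \Phi\, d\widehat{\mathbf{P}} &= \sum_{j=1}^L \mathbf{E}\bigl[\nu_\xi(o_j)\, g(\xi, o_j)\bigr], \\
\int \Phi \circ \mathcal{S}\, d\widehat{\mathbf{P}} &= \sum_{i=1}^L \mathbf{E}\Bigl[\nu_\xi(o_i) \sum_{y \in V} q_\xi(o_i, y)\, g(\xi, y)\Bigr].
\end{align*}
To compare them I would define $F(x, y) := \mathbf{E}[\nu_\xi(x)\, q_\xi(x, y)\, g(\xi, y)]$ on $V\times V$. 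Although the integrand is not $\Gamma$-diagonally invariant for fixed $\xi$, the joint $\Gamma$-invariance of $\nu, q, g$ together with the $\Gamma$-invariance of $\mathbf{P}$ (so that $\xi \mapsto \gamma^{-1}\xi$ is measure preserving) shows that $F$ itself is $\Gamma$-diagonally invariant on $V\times V$.

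Next I would apply Proposition \ref{prop:TMTP} to $F$ with the uniform choice $a_i \equiv 1/L$, which yields the unweighted MTP identity
\[
\sum_{i=1}^L \sum_{y\in V} F(o_i, y) = \sum_{j=1}^L \frac{1}{m_j} \sum_{x\in V} m(x)\, F(x, o_j).
\]
Substituting the definition of $F$, pulling $g(\xi, o_j)$ out of the inner sum, and invoking the stationarity hypothesis $\sum_x m(x)\, \nu_\xi(x)\, q_\xi(x, o_j) = m(o_j)\, \nu_\xi(o_j) = m_j\, \nu_\xi(o_j)$, the right-hand side collapses to $\sum_j \mathbf{E}[\nu_\xi(o_j)\, g(\xi, o_j)] = \int \Phi\, d\widehat{\mathbf{P}}$, while the left-hand side is exactly $\int \Phi \circ \mathcal{S}\, d\widehat{\mathbf{P}}$, completing the proof of $\mathcal{S}$-invariance.

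The main (and really only) obstacle is the alignment of the non-unimodular weights: in the mass transport identity the vertex $x$ acquires a factor $m(x)$ and the orbit representative $o_j$ a factor $1/m_j$, and these are precisely the weights needed for the stationarity of $m(x)\nu_\xi(x)$ (rather than of $\nu_\xi(x)$ itself) to convert one side into the other. This alignment is the whole content of the $\nu_\xi(x)$-biasing in the definition of $\widehat{\mathbf{P}}_x$; once recognized, the argument reduces to a single application of MTP followed by one substitution, with Tonelli handling the interchange of $\mathbf{E}$ and the sums over $V$ in the nonnegative setting.
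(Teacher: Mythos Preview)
Your argument is correct. The paper does not actually prove Theorem~\ref{thm:key} itself---it is quoted from \cite{LS1999b}---but your computation matches the strategy visible in the paper's proof of the two-sided extension (Theorem~\ref{thm:key-extension}): condition on one step, package the resulting expression as a $\Gamma$-diagonally invariant function of two vertices, apply the mass transport identity, and collapse using the stationarity of $x\mapsto m(x)\nu_\xi(x)$. The only cosmetic difference is that the paper keeps the full path measure and writes the transport function with the factor $\frac{m(y)}{m(x)}$ built in, whereas you first reduce to the scalar $g(\xi,y)$ and then invoke Proposition~\ref{prop:TMTP} with $a_i\equiv 1/L$; these are equivalent rearrangements of the same one-line MTP computation.
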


Now we give a natural extension of Theorem \ref{thm:key} for a two-sided random walk. 
\begin{theorem}\label{thm:key-extension}
	With the same notations as in Theorem \ref{thm:key}, let
	\[
	q_\xi^{\leftarrow}(x_1,x_2):=\frac{\nu_\xi(x_2)m(x_2)}{\nu_\xi(x_1)m(x_1)}q_\xi(x_2,x_1)
	\]
	Since $x\mapsto \nu_\xi(x)m(x)$ is a stationary measure for the Markov chain determined by $q_\xi$, $q_\xi^{\leftarrow}$ is  a transition probability. Moreover $q_\xi^{\leftarrow}$ is also $\Gamma$-invariant.
	
	For $(\xi,x)\in \Xi\times V$, let $\theta_\xi^x$ denote the law of a two-sided random walk $\widehat{w}\in V^{\mathbb{Z}}$ starting from $x$, namely $w=\pi\widehat{w}$ is a random walk starting from $x$ determined by $q_\xi$, and $w^-:=\pi^-(\widehat{w})$ is an independent random walk starting from $x$ determined by transition probability $q_\xi^{\leftarrow}$. Let $\Theta_x$ denote the joint law of $(\xi,\widehat{w})$ biased by $\nu_\xi(x)$. Write
	\[
	\mathbf{\Theta}:=\sum_{i=1}^{L}\Theta_{o_i}
	\]
	
	Let $\mathcal{I}_{\mathbb{Z}}$ denote the $\Gamma$-invariant $\sigma$-field. Then the restriction of $\mathbf{\Theta}$ to $\mathcal{I}_{\mathbb{Z}}$ is a $\mathcal{T}$-invariant measure. If $$\sum_{i=1}^{L}\mathbf{E}[\nu_\xi(o_i)]=1,$$ then $\mathbf{\Theta}$ is a probability measure. 
\end{theorem}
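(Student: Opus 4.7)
The normalization claim is immediate: for each $i$, $\Theta_{o_i}(\Xi \times V^{\mathbb{Z}}) = \mathbf{E}[\nu_\xi(o_i)]$, so $\mathbf{\Theta}$ has total mass $\sum_i \mathbf{E}[\nu_\xi(o_i)]$, which equals $1$ under the hypothesis.

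For the $\mathcal{T}$-invariance on $\mathcal{I}_\mathbb{Z}$, the plan is to follow the same strategy as in the proof of Theorem \ref{thm:key} in \cite{LS1999b}: a standard monotone-class argument reduces the claim to verifying
\[
\int F\, d\mathbf{\Theta} = \int F \circ \mathcal{T}\, d\mathbf{\Theta}
\]
for every bounded $\Gamma$-invariant cylinder $F(\xi, \widehat{w}) = G(\xi, \widehat{w}(a), \ldots, \widehat{w}(b))$ with $a \leq 0 \leq b$. For such an $F$, the independence of past and future under $\theta_\xi^{o_i}$, combined with telescoping the reverse-transition product via the detailed-balance identity $\nu_\xi(x) m(x) q_\xi(x,y) = \nu_\xi(y) m(y) q_\xi^{\leftarrow}(y, x)$ (which is built into the definition of $q_\xi^{\leftarrow}$), yields
\[
\int F\, d\mathbf{\Theta} = \mathbf{E}_{\mathbf{P}}\!\left[\sum_{\vec{x}}\mathbf{1}\{x_0 \in \mathcal{O}\}\,\frac{\Psi(\xi,\vec{x})}{m(x_0)}\right], \quad \int F\circ\mathcal{T}\, d\mathbf{\Theta} = \mathbf{E}_{\mathbf{P}}\!\left[\sum_{\vec{x}}\mathbf{1}\{x_{-1} \in \mathcal{O}\}\,\frac{\Psi(\xi,\vec{x})}{m(x_{-1})}\right],
\]
with the common summand $\Psi(\xi,\vec{x}) := G(\xi,\vec{x})\, \nu_\xi(x_a)\, m(x_a) \prod_{j=a}^{b-1} q_\xi(x_j,x_{j+1})$; the only difference between the two expressions is which coordinate ($x_0$ vs $x_{-1}$) is forced into $\mathcal{O}$ and which Haar factor sits in the denominator.

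To equate the two sums, I set $\tilde{g}(u,v) := \mathbf{E}_{\mathbf{P}}[\sum_{\vec{x}:\, x_{-1}=u,\, x_0=v}\Psi(\xi,\vec{x})]$ and $f(u,v) := \tilde{g}(u,v)/m(u)$. Using the $\Gamma$-invariance of $G, \nu, q, \mathbf{P}$ together with Lemma \ref{lem:haar} (which ensures $m(\gamma x)/m(x)$ depends only on $\gamma$), one checks that $\tilde{g}(\gamma u,\gamma v) = [m(\gamma u)/m(u)]\,\tilde{g}(u,v)$, and hence $f$ is $\Gamma$-diagonally invariant. The mass transport principle of \cite[Corollary 3.7]{BLPS1999} (equivalently, the case $a_i = 1/L$ of Proposition \ref{prop:TMTP}) applied to $f$ now gives
\[
\sum_i \sum_v f(o_i, v) = \sum_i \sum_u f(u, o_i)\,\frac{m(u)}{m(o_i)},
\]
which, after substituting $f = \tilde{g}/m$, is exactly $\int F\circ\mathcal{T}\, d\mathbf{\Theta} = \int F\, d\mathbf{\Theta}$.

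The main obstacle in this plan is the bookkeeping that aligns the detailed-balance and MTP steps: one must verify that the Haar factor $1/m(x_0)$ produced by telescoping the $q^{\leftarrow}$-product via detailed balance matches precisely the compensating ratio $m(u)/m(o_i)$ that MTP introduces when the summation base is shifted from $x_0$ to $x_{-1}$. Once these ``modular'' factors are checked to line up, the theorem follows from a single application of MTP.
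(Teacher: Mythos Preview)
Your proposal is correct and follows the same strategy as the paper: rewrite the backward transitions via the detailed-balance identity defining $q_\xi^{\leftarrow}$, and then apply the mass-transport principle to equate the two resulting expressions. The paper's execution is slightly more streamlined: instead of reducing to $\Gamma$-invariant cylinder functions and expanding into finite path sums, it works directly with an arbitrary nonnegative $\Gamma$-invariant $F$ via the measure-level identity
\[
d\theta_\xi^x(\widehat{w})=\sum_{y\in V}\frac{\nu_\xi(y)m(y)}{\nu_\xi(x)m(x)}\,\mathbf{1}_{\{\mathcal{T}\widehat{w}(-1)=x\}}\,d\theta_\xi^y(\mathcal{T}\widehat{w}),
\]
and then applies MTP to $f(x,y;\xi)=\nu_\xi(y)\tfrac{m(y)}{m(x)}\int\mathbf{1}_{\{\widehat{w}(-1)=x\}}F(\xi,\widehat{w})\,d\theta_\xi^y(\widehat{w})$, which is exactly your $\tilde g(x,y)/m(x)$ once one integrates out the remaining coordinates. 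This bypasses the monotone-class reduction you invoke, which is a mild convenience since it is not entirely obvious that $\Gamma$-invariant \emph{cylinder} functions are rich enough to generate $\mathcal{I}_{\mathbb Z}$; in any case your path-sum computation extends verbatim to general $F$ by conditioning on $(\widehat w(-1),\widehat w(0))$, so no substantive gap arises.
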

\begin{proof}
	This is just an adaptation of the proof of Theorem 3 of  \cite{LS1999b}. 
	Let $F$ be a nonnegative $\Gamma$-invariant measurable function on $\Xi\times V^{\mathbb{Z}}$. It suffices to show 
	\begin{equation*}
		\int_{\Xi\times V^{\mathbb{Z}}} F\circ \mathcal{T} d\mathbf{\Theta}  =\int_{\Xi\times V^{\mathbb{Z}}} F\,   d\mathbf{\Theta}.
	\end{equation*}
	
	A key observation is that 
	\[
	d\theta_\xi^x(\widehat{w})=\sum_{y\in V}{\frac{\nu_\xi(y)m(y)}{\nu_\xi(x)m(x)}}\mathbf{1}_{\{\mathcal{T}\widehat{w}(-1)=x \}}d\theta_\xi^y(\mathcal{T}\widehat{w}).
	\]	
	Define 
	\[
	f(x,y;\xi):=\nu_\xi(y)\frac{m(y)}{m(x)}\int_{V^\mathbb{Z}} \mathbf{1}_{\{\widehat{w}(-1)=x \}} F(\xi,\widehat{w})d\theta_\xi^y(\widehat{w}).
	\]
	Then we have 
	\begin{equation*}
		\int_{\Xi\times V^{\mathbb{Z}}} F\circ \mathcal{T} d\mathbf{\Theta}=
		\int_{\Xi\times V^{\mathbb{Z}}} F\, d\mathbf{\Theta} \circ \mathcal{T}^{-1}=
		\sum_{j=1}^{L}\sum_{y\in V}\int_{\Xi}  d\mathbf{P}(\xi) {f(o_j,y;\xi)}.
	\end{equation*}
	
	It's easy to verify that $f$ is $\Gamma$-invariant and hence so is $\mathbf{E}[f(x,y;\cdot)]$. Then mass transport principle (for example Lemma 1 of \cite{LS1999b}) yields that 
	\begin{eqnarray*}
		\int_{\Xi\times V^{\mathbb{Z}}} F\circ \mathcal{T} d\mathbf{\Theta}
		&=&	\sum_{j=1}^{L}\sum_{y\in V}\int_{\Xi}  d\mathbf{P}(\xi) \frac{m(y)}{m(o_j)}{f(y,o_j;\xi)}\\
		&=&\sum_{j=1}^{L}\int_{\Xi} d\mathbf{P}(\xi)\int_{V^\mathbb{Z}}\sum_{y\in V} \nu_\xi(o_j)\mathbf{1}_{\{y=\widehat{w}(-1) \}}F(\xi,\widehat{w})d\theta_\xi^{o_j}(\widehat{w})\\
		&=&\sum_{j=1}^{L}\int_{\Xi} d\mathbf{P}(\xi)\int_{V^\mathbb{Z}}\nu_\xi(o_j) F(\xi,\widehat{w})d\theta_\xi^{o_j}(\widehat{w})\\
		&=&\int_{\Xi\times V^{\mathbb{Z}}} F   d\mathbf{\Theta} ,
	\end{eqnarray*}
	which completes the proof.
\end{proof}
Now we summarize some properties of the backward random walk determined by $q_\xi^\leftarrow$. The proof is standard for reversed Markov chains and we omit it. 
\begin{proposition}\label{prop:backwardrw}
	With the same notation as in Theorem \ref{thm:key-extension}, one has the following properties:\\
	
	$(1)$ $x\mapsto \nu_\xi(x)m(x)$ is also a stationary distribution for the Markov chain determined by $q_\xi^\leftarrow$. And $(q_\xi^\leftarrow)^{\leftarrow}=q_{\xi}$.

	$(2)$ $q_\xi,q_\xi^\leftarrow$ induces same communicating classes. In particular, $q_\xi$ is irreducible iff $q_\xi^\leftarrow$ is  irreducible. Moreover on each communicating class $q_\xi$ is transient if and only if $q_\xi^\leftarrow$ is transient. 
	
\end{proposition}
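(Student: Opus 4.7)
The plan is to carry out the standard reversed-Markov-chain calculations. For part $(1)$, stationarity of $x\mapsto \nu_\xi(x)m(x)$ under $q_\xi^\leftarrow$ is a one-line check: summing over $x$ and substituting the defining formula, one gets
$$\sum_{x\in V}\nu_\xi(x)m(x)\,q_\xi^\leftarrow(x,y) \;=\; \nu_\xi(y)m(y)\sum_{x\in V}q_\xi(y,x) \;=\; \nu_\xi(y)m(y),$$
so the weights $\nu_\xi(x)m(x)$ cancel and one is left with the row-sum $1$ of $q_\xi$. The involution identity $(q_\xi^\leftarrow)^{\leftarrow}=q_\xi$ follows by substituting the reversal formula twice: the two ratios $\nu_\xi(x_2)m(x_2)/\nu_\xi(x_1)m(x_1)$ and $\nu_\xi(x_1)m(x_1)/\nu_\xi(x_2)m(x_2)$ multiply to $1$, leaving $q_\xi(x_1,x_2)$.

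For part $(2)$, the key observation is that $\nu_\xi(\cdot)$ and $m(\cdot)$ are strictly positive, so $q_\xi^\leftarrow(x,y)>0$ iff $q_\xi(y,x)>0$. Hence the directed transition graph of $q_\xi^\leftarrow$ is the reversal of the directed transition graph of $q_\xi$. An induction on $n$, using Chapman--Kolmogorov together with the reversal formula at each step, yields the $n$-step identity
$$(q_\xi^\leftarrow)^n(x,y) \;=\; \frac{\nu_\xi(y)m(y)}{\nu_\xi(x)m(x)}\,q_\xi^n(y,x) \qquad \text{for all } n\geq 0,\ x,y\in V.$$
Taking $x=y$ gives $(q_\xi^\leftarrow)^n(x,x)=q_\xi^n(x,x)$ for every $n$.

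Two consequences then close out part $(2)$. First, the $n$-step identity shows that $x$ leads to $y$ under $q_\xi^\leftarrow$ iff $y$ leads to $x$ under $q_\xi$; combined with the symmetric statement (swap the roles of the two chains, using $(q_\xi^\leftarrow)^\leftarrow=q_\xi$ from part $(1)$), this shows that $x$ and $y$ communicate under $q_\xi^\leftarrow$ precisely when they communicate under $q_\xi$, so the communicating classes coincide and irreducibility is a joint property. Second, the equality of diagonal $n$-step probabilities makes the Green function $\sum_{n\geq 0}p^n(x,x)$ the same for both chains at every $x$, so transience on a given communicating class transfers across.

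The only calculation requiring care is the inductive step for the $n$-step formula, but this is essentially the combinatorial statement that a path $x=z_0,z_1,\ldots,z_n=y$ has the same product of transition probabilities under $q_\xi$ (read forward) as the reversed path $y,z_{n-1},\ldots,z_0$ has under $q_\xi^\leftarrow$, up to the boundary ratio $\nu_\xi(y)m(y)/\nu_\xi(x)m(x)$; writing this out by substituting the inductive hypothesis into Chapman--Kolmogorov and re-applying the reversal formula finishes the induction. I anticipate no genuine obstacle in the proof, only bookkeeping.
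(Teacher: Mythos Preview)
Your proof is correct and is exactly the standard reversed-Markov-chain argument the paper has in mind; indeed, the paper omits the proof entirely, stating only that ``the proof is standard for reversed Markov chains and we omit it.'' Your computations for stationarity, the involution, the $n$-step identity, equality of communicating classes, and equality of diagonal Green functions are all sound; the only implicit assumption you make---strict positivity of $\nu_\xi(\cdot)m(\cdot)$---is needed already for $q_\xi^\leftarrow$ to be well defined and is satisfied in all applications in the paper.
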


Next we return to the Bernoulli percolation setting. First we consider simple random walk on a quasi-transitive graph $G=(V,E)$.
\begin{corollary}\label{cor:srw} 
	Let $G=(V,E)$ be a quasi-transitive graph with automorphism group $\Gamma=\textnormal{Aut}(G)$. 
	Let $\{o_1,\ldots,o_L\}$ be a complete set of representatives of $\Gamma\backslash V$. Suppose that $(\mathbf{P}_p,\xi)$ is Bernoulli$(p)$ bond percolation process on $G$, and let $\widetilde{\mathbf{P}}_{o}:=\mathbf{P}_p\times\mathbf{P}_{o}$, where $\mathbf{P}_o$ is the law of simple random walk on $G$ starting at the point $o$. Then there exist positive constants $c_i,i=1,\ldots,L$, summing to $1$ such that $\widehat{\mathbf{P}}:=\sum_{i=1}^{L}c_i\widetilde{\mathbf{P}}_{o_i}$ is a probability measure and the restriction of $\widehat{\mathbf{P}}$ to the $\Gamma$-invariant $\sigma$-field $\mathcal{I}$ is an $\mathcal{S}$-invariant measure. 
\end{corollary}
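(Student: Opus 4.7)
The plan is to realize this corollary as a direct application of Theorem \ref{thm:key}. Take $\Xi = \{0,1\}^E$ with $\mathbf{P} = \mathbf{P}_p$, so that $(\Xi, \mathcal{F}, \mathbf{P}_p, \Gamma)$ is a measure-preserving system because Bernoulli percolation is $\textnormal{Aut}(G)$-invariant, and let the random environment be (deterministic in $\xi$) simple random walk, $q_\xi(x,y) := \mathbf{1}_{\{x \sim y\}}/\deg(x)$, which is $\Gamma$-invariant since automorphisms preserve both adjacency and degree. The key design choice is the measurable map $\nu$. The naive candidate $\nu_\xi(x) \propto \deg(x)/m(x)$, which would make $m(x)\nu_\xi(x)$ proportional to the usual SRW stationary measure $\deg(x)$, fails $\Gamma$-invariance whenever $\Gamma$ is nonunimodular, since then $m(\gamma x)/m(x)$ is a nontrivial modular character while $\deg$ is $\Gamma$-invariant. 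Instead I will take $\nu_\xi(x) := \widetilde{\mu}([x])$, where $\widetilde{\mu}$ is the unique stationary probability distribution on the finite set $\mathcal{O}$ of the lazy orbit chain introduced in the paragraph preceding Lemma \ref{lem:harmonicity of modular fcn}; positivity of $\widetilde{\mu}$ follows from irreducibility, which is a consequence of the connectedness of $G$.

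Next I verify the remaining hypotheses of Theorem \ref{thm:key}. First, $\nu$ is $\Gamma$-invariant since $\widetilde{\mu}([\cdot])$ depends only on the orbit. Second, $m(x)\nu_\xi(x) = m(x)\widetilde{\mu}([x])$ is stationary for SRW on $G$; this is exactly the passage from orbit-chain stationarity back to $G$-stationarity provided by Lemma 3.25 of \cite{Woess2000}, which was already invoked in the proof of Lemma \ref{lem:harmonicity of modular fcn}. Third, the normalization $\sum_{i=1}^{L} \mathbf{E}[\nu_\xi(o_i)] = \sum_{i=1}^{L} \widetilde{\mu}([o_i]) = 1$ holds because $\widetilde{\mu}$ is a probability distribution on $\mathcal{O}$.

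Theorem \ref{thm:key} then asserts that $\widehat{\mathbf{P}} := \sum_{i=1}^{L} \widehat{\mathbf{P}}_{o_i}$ is a probability measure whose restriction to the $\Gamma$-invariant $\sigma$-field $\mathcal{I}$ is $\mathcal{S}$-invariant. Because both $q_\xi$ and $\nu_\xi$ are independent of $\xi$, the tilted law factors cleanly as $\widehat{\mathbf{P}}_{o_i} = \widetilde{\mu}([o_i])\,\mathbf{P}_p \otimes \mathbf{P}_{o_i} = \widetilde{\mu}([o_i])\,\widetilde{\mathbf{P}}_{o_i}$, so setting $c_i := \widetilde{\mu}([o_i])$ gives the stated decomposition with positive $c_i$ summing to $1$.

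The only step requiring genuine thought is the choice of $\nu$: the obvious degree-over-Haar candidate fails $\Gamma$-invariance in the nonunimodular setting, and one must recognize that the orbit-chain stationary measure $\widetilde{\mu}$ is the correct $\Gamma$-invariant quantity whose pointwise product with $m$ recovers the SRW stationary measure on $G$. Once this replacement is made, every remaining verification is routine bookkeeping against the hypotheses of Theorem \ref{thm:key}.
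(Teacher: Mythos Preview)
Your proof is correct and takes essentially the same approach as the paper: both apply Theorem~\ref{thm:key} with the deterministic SRW environment and arrive at $c_i = \widetilde{\mu}([o_i])$. The only cosmetic difference is that the paper first writes $\nu_\xi(x) = c\,\Delta(u,x)\deg(x)/m(x)$ with $a=\mu$ and then simplifies via Lemma~\ref{lem:haar} to a constant times $\widetilde{\mu}([x])$, whereas you go directly to $\nu_\xi(x)=\widetilde{\mu}([x])$ and invoke the same Woess lemma for stationarity; after normalization these are identical.
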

\begin{proof}
	In Theorem \ref{thm:key} we take  $(\Xi,\mathcal{F},\mathbf{P},\Gamma)=(\{0,1\}^E,\mathcal{F}_E,\mathbf{P}_p,\textnormal{Aut}(G))$. $\mathbf{E}$ will be the corresponding expectation operator with respect to this measure $\mathbf{P}=\mathbf{P}_p$.

	Take $q(\xi,x,y)=q_\xi(x,y)=\frac{\mathbf{1}_{\{x\sim y \}}}{\textnormal{deg}(x)}$, then the Markov chain associated with transition probability $q_\xi$ is just simple random walk on $G$ and obviously $q$ is $\Gamma$-invariant. 
	
	Fix some $u\in V$. We take $\nu:\{0,1\}^V\times V\rightarrow[0,\infty)$ to be
	\[
	\nu_{\xi}(x)=c\frac{\Delta(u,x)\textnormal{deg}(x)}{m(x)},
	\]
	where $c>0$ is a constant to be determined later and $a=\mu$ as in Lemma \ref{lem:harmonicity of modular fcn} such that $\Delta$ is harmonic for simple random walk.
	By Lemma \ref{lem:haar}
	\begin{eqnarray*}
		\nu_\xi(x)&=&c\frac{\widetilde{\mu}([x])\textnormal{deg}(u)|\Gamma_xu|}{\widetilde{\mu}([u])\textnormal{deg}(x)|\Gamma_ux|}\frac{\textnormal{deg}(x)}{m(x)}\\
		&=&c\frac{\textnormal{deg}(u)}{\widetilde{\mu}([u])m(u)}\widetilde{\mu}([x]).
	\end{eqnarray*}
	From this we see that $\nu$ is also $\Gamma$-invariant and is constant on each orbit. 
	
	From the harmonicity of modular function (Lemma \ref{lem:harmonicity of modular fcn}), we know that $x\mapsto m(x)\nu_\xi(x)$ is a stationary distribution for the Markov chain (simple random walk now) determined by $q_\xi$. We choose the normalizing constant $c>0$ to be determined by
	\[
	\sum_{i=1}^{L}\mathbf{E}[\nu_{\xi}(o_i)]=1.
	\]
	Now take $c_i=\nu_\xi(o_i)$ (this does not depend on $\xi$) for $i=1,\ldots,L$. Then $c_i\widetilde{\mathbf{P}}_{o_i}=\widehat{\mathbf{P}}_{o_i}$, where $\widehat{\mathbf{P}}_x$ is defined earlier as the joint distribution on $\Xi\times V^{\mathbb{N}}$ of $\xi$ biased by $\nu_{\xi}(x)$ and the trajectory of the Markov chain determined by $q_\xi$ starting at $x$.  Therefore Theorem \ref{thm:key} yields the desired result. 
\end{proof}

Next we consider the corresponding two-sided random walk as in Theorem \ref{thm:key-extension}. Since $q_\xi(x,y)=\frac{\mathbf{1}_{\{x\sim y \}}}{\textnormal{deg}(x)}$, one has 
\[
q_\xi^\leftarrow(x,y)=\frac{\nu_\xi(y)m(y)}{\nu_\xi(x)m(x)}q_\xi(y,x)=\frac{\mu([y])m(y)}{\mu([x])m(x)}
\frac{\mathbf{1}_{\{x\sim y \}}}{\textnormal{deg}(x)}.
\]
Note by Remark \ref{rem:uni} in the unimodular case $q_\xi^\leftarrow=q_\xi$ is just the transition probabilities for simple random walk. As in Theorem \ref{thm:key-extension} fix some $x\in V$ and let $w$ be a simple random walk on $G$ started from $x$, and $w^-$ be an independent random walk determined by $q_\xi^\leftarrow$ also started from $x$. Set $\theta^x$ to be the law of the two-sided random walk $\widehat{w}$ started from $x$ determined by $w=\pi \widehat{w}$ and $w^-=\pi^-\widehat{w}$. 

Let $\mathbf{P}_{\rho}^{\mathbb{Z}}:=\sum_{i=1}^{L}c_i\theta^{o_i}$ denote the law of a two-sided random walk defined above starting from the independently  random choosed vertex $\rho$, where $c_i$ is from Corollary \ref{cor:srw}.

With these notations and $\widehat{\mathbf{P}}$ as in Corollary \ref{cor:srw} , we have the following extension.
\begin{corollary}\label{cor:two-sided srw} 
	Let $\mathcal{I}_{\mathbb{Z}}$ denote the $\Gamma$-invariant $\sigma$ field of $\{0,1\}^E\times V^{\mathbb{Z}}$, then $(\{0,1\}^E\times V^{\mathbb{Z}},\mathcal{I}_{\mathbb{Z}},\mathbf{P}_p\times\mathbf{P}_{\rho}^{\mathbb{Z}},\mathcal{T})$ is an invertible measure preserving dynamical system. Moreover $\widehat{\mathbf{P}}=\mathbf{P}_p\times\mathbf{P}_{\rho}^{\mathbb{Z}}\circ\pi^{-1}$. 
\end{corollary}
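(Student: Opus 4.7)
The plan is to apply Theorem \ref{thm:key-extension} using exactly the same data that was used in the proof of Corollary \ref{cor:srw}: the dynamical system $(\{0,1\}^E,\mathcal{F}_E,\mathbf{P}_p,\Gamma)$, the $\Gamma$-invariant environment $q_\xi(x,y)=\mathbf{1}_{\{x\sim y\}}/\deg(x)$ corresponding to simple random walk, and the $\Gamma$-invariant biasing function $\nu_\xi(x)=c\,\Delta(u,x)\deg(x)/m(x)$ (constant in $\xi$, constant on each $\Gamma$-orbit) normalized so that $\sum_{i=1}^L \mathbf{E}[\nu_\xi(o_i)]=1$. The hypotheses of Theorem \ref{thm:key-extension} were already verified in Corollary \ref{cor:srw}: both $q$ and $\nu$ are $\Gamma$-invariant, and the harmonicity clause of Lemma \ref{lem:harmonicity of modular fcn} gives that $x\mapsto m(x)\nu_\xi(x)$ is a stationary distribution for $q_\xi$.

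The main identification to make is that the measure $\mathbf{\Theta}=\sum_{i=1}^L \Theta_{o_i}$ produced by Theorem \ref{thm:key-extension} coincides with $\mathbf{P}_p\times\mathbf{P}_\rho^{\mathbb{Z}}$. Since $\nu_\xi(o_i)=c_i$ is independent of $\xi$, biasing $\mathbf{P}_p\times\theta^{o_i}$ by $\nu_\xi(o_i)$ just multiplies it by the scalar $c_i$, so $\Theta_{o_i}=c_i(\mathbf{P}_p\times\theta^{o_i})$. Summing over $i$ yields
\[
\mathbf{\Theta}=\mathbf{P}_p\times\sum_{i=1}^{L} c_i\,\theta^{o_i}=\mathbf{P}_p\times\mathbf{P}_\rho^{\mathbb{Z}},
\]
which is a probability measure by the normalization $\sum_i c_i=1$. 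Theorem \ref{thm:key-extension} then directly gives that the restriction of this measure to $\mathcal{I}_{\mathbb{Z}}$ is $\mathcal{T}$-invariant. Invertibility of $\mathcal{T}$ is immediate, as the two-sided shift on $V^{\mathbb{Z}}$ is a bimeasurable bijection.

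For the \emph{moreover} part, note that by construction of $\theta^{o_i}$ the forward marginal $\pi\widehat{w}$ is precisely a simple random walk trajectory from $o_i$, independent of $\xi$. Hence $(\mathbf{P}_p\times\theta^{o_i})\circ\pi^{-1}=\mathbf{P}_p\times\mathbf{P}_{o_i}=\widetilde{\mathbf{P}}_{o_i}$, and summing with the weights $c_i$ gives
\[
\mathbf{P}_p\times\mathbf{P}_\rho^{\mathbb{Z}}\circ\pi^{-1}=\sum_{i=1}^L c_i\widetilde{\mathbf{P}}_{o_i}=\widehat{\mathbf{P}},
\]
as claimed. There is essentially no obstacle here: the corollary is a bookkeeping exercise that packages Theorem \ref{thm:key-extension} in the setting of Corollary \ref{cor:srw}. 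The only points requiring attention are verifying that the weights $c_i$ from Corollary \ref{cor:srw} match $\nu_\xi(o_i)$ so that the two $\mathbf{\Theta}$-descriptions agree, and noting that because the biasing factor is $\xi$-independent it does not perturb the product structure of $\mathbf{P}_p$ and the walk law.
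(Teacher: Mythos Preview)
Your proposal is correct and follows exactly the approach the paper takes: the paper's proof simply states that this is an application of Theorem~\ref{thm:key-extension} in this particular setting and that the identity $\widehat{\mathbf{P}}=\mathbf{P}_p\times\mathbf{P}_{\rho}^{\mathbb{Z}}\circ\pi^{-1}$ is straightforward to verify, omitting the details. You have supplied precisely those omitted details---the identification of $\mathbf{\Theta}$ with $\mathbf{P}_p\times\mathbf{P}_\rho^{\mathbb{Z}}$ via the fact that $\nu_\xi(o_i)=c_i$ is $\xi$-independent, and the computation of the forward marginal---so your argument is a faithful expansion of the paper's own proof.
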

\begin{proof}
	This is just an application of Theorem \ref{thm:key-extension} in this particular setting.
	
	It's straightforward to verify that $\widehat{\mathbf{P}}=\mathbf{P}_p\times\mathbf{P}_{\rho}^{\mathbb{Z}}\circ\pi^{-1}$ and we omit the details.
\end{proof}

Notice that in the above ``simple random walk on $G$" setting, $q_\xi$ actually is independent of the percolation configuration $\xi$. In the following we shall consider random walk on the percolation clusters of $\xi$.

We first recall the delayed two-sided simple random walk on percolation clusters from \cite{LS1999b}: let $(\mathbf{P},\xi)$ be a bond percolation process on $G$ and $\xi\in 2^E$ be a percolation configuration. Let $x\in V$ be some fixed vertex, called base point.  Let $w(0)=x$. For $n\geq0$, conditioned on $\langle w(0),\ldots,w(n)\rangle$ and $\xi$, let $w'(n+1)$ be chosen uniformly from the neighbors of $w(n)$ in $G$ with equal probability. If the edge $(w(n),w'(n+1))$ belongs to $\xi$, then we set $w(n+1)=w'(n+1)$, otherwise we set $w(n+1)=w(n)$. This $w$ is a delayed simple random walk on percolation cluster $C_\xi(x)$. Given $\xi$, let $w,w^-$ be two independent delayed simple random walk. Set $\widehat{w}$  such that $w=\pi (\widehat{w}),w^-=\pi^-(\widehat{w})$. Then $\widehat{w}$ is called a two-sided delayed simple random walk. If $G$ is transitive and unimodular, the two-sided delayed simple random walk $\widehat{w}$ is shift invariant on the $\Gamma$-invariant $\sigma$-field, which is a key ingredient in the proof of Theorem \ref{thm:indisuni}. 

Now for general quasi-transitive graphs, we introduce the following ``biased" two-sided random walk inspired by Example 6 in \cite{LS1999b}. Suppose $\Gamma\subset \textnormal{Aut}(G)$ is quasi-transitive and $m$ is an associated Haar measure on $\Gamma$. Let $(\mathbf{P},\xi)$ be a bond percolation process on $G$.  For each edge $e=(x,y)\in E$,  set conductance $c(e)=\sqrt{m(x)m(y)}$. Now given a  percolation configuration $\xi$, for each edge $e\in E$, if $e\notin \xi$, delete the edge $e$ from $G$ and add a loop at $x$ with conductance $c(e)$ and a loop at $y$ also  with conductance $c(e)$. In the resulting network, one has a corresponding random walk according the conductance, which is our desired ``biased" random walk.   To be specific, we define $q_\xi(x,y)$ as follows:
\begin{equation}\label{eq:biased transition probability}
	q(\xi,x,y)=q_\xi(x,y)=
	\left\{\begin{array}{cc}
		\frac{\sqrt{m(x)m(y)}}{\sum_{z\sim x}\sqrt{m(z)m(x)} }&\textnormal{ if }(x,y)\in\xi,y\neq x,\\
		& \\
		0&\textnormal{ if  }y\neq x \textnormal{ and } (x,y)\notin \xi,\\
		&\\
		1-\sum_{ z\sim x,z\neq x}{q_\xi(x,z)} &\textnormal{ if } y=x.
	\end{array}\right.
\end{equation}

Obviously $q$ and $\nu_\xi(x):=c\sum_{z\sim x}\sqrt{m(z)/m(x)}$ are both  $\Gamma$-invariant by Lemma \ref{lem:haar}, where $c>0$ is a constant. Moreover it is obvious that  $x\mapsto \nu_\xi(x)m(x)=c\sum_{z\sim x}\sqrt{m(z)m(x)}$ is a stationary measure for $q_\xi$. 
Let $q_\xi^\leftarrow$ be given as in Theorem \ref{thm:key-extension}, then
\[
q_\xi^\leftarrow(x,y)=\frac{\nu_\xi(y)m(y)}{\nu_\xi(x)m(x)}q_\xi(y,x)=q_\xi(x,y). 
\]
\begin{definition}\label{def:two-sided baised rw}
	Fix $x\in V$. Let $w$ be a ``square-root biased" random walk determined by $q_\xi$ in formula \eqref{eq:biased transition probability} started from $x$. Let $w^-$ be an independent ``square-root biased" random walk determined by $q_\xi^\leftarrow=q_\xi$ started from $x$. Let $\widehat{w}\in V^{\mathbb{Z}}$ be such that 
	$w=\pi \widehat{w},w^-=\pi^- \widehat{w}$. Then $\widehat{w}$ is called a two-sided ``square-root biased" random walk started from $x$. 
\end{definition}
Notice in the case $\Gamma$ is transitive and unimodular, $\widehat{w}$ is just the two-sided delayed simple random walk we recalled earlier.

\begin{corollary}\label{cor:two-sided biased rw}
	Suppose $G=(V,E)$ is a connected, locally finite graph and $\Gamma\subset \textnormal{Aut}(G)$ is closed and quasi-transitive. Let $m$ be a Haar measure on $\Gamma$.  
	Let $\{o_1,\ldots,o_L\}$ be a complete set of representatives of $\Gamma\backslash V$. Suppose that $(\mathbf{P},\xi)$ is a $\Gamma$-invariant bond percolation process on $G$, and let $q_\xi$ be given by formula \eqref{eq:biased transition probability}, $c>0$ be a constant, and $\nu_\xi(x)=c\sum_{z\sim x}\sqrt{m(z)/m(x)}$. For $\xi\in 2^E,x\in V$, let $\widehat{w}$ be a ``square-root biased" two-sided random walk  started from $x$ as  in Definition \ref{def:two-sided baised rw}.  Let $\Theta_x$ denote the joint law of $(\xi,\widehat{w})$ biased by $\nu_\xi(x)$. Write
	\[
	\mathbf{\Theta}:=\sum_{i=1}^{L}\Theta_{o_i}
	\]
	
	Let $\mathcal{I}_{\mathbb{Z}}$ denote the $\Gamma$-invariant $\sigma$-field. Then the restriction of $\mathbf{\Theta}$ to $\mathcal{I}_{\mathbb{Z}}$ is an $\mathcal{T}$-invaraint measure, where $\mathcal{T}$ is the natural extension on $\Xi\times V^\mathbb{Z}$ of $\mathcal{S}$. If  the constant $c>0$ satisfies $\sum_{i=1}^{L}\mathbf{E}[\nu_\xi(o_i)]=1$, then $\mathbf{\Theta}$ is a probability measure. 
\end{corollary}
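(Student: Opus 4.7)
The plan is to apply Theorem \ref{thm:key-extension} directly with $(\Xi,\mathcal{F},\mathbf{P},\Gamma)$ as in the statement, the transition kernel $q$ given by \eqref{eq:biased transition probability}, and the function $\nu_\xi(x) = c\sum_{z\sim x}\sqrt{m(z)/m(x)}$. All one needs is to verify the hypotheses of that theorem and to check that the backward kernel $q_\xi^\leftarrow$ coincides with $q_\xi$, so that the two-sided walk constructed inside Theorem \ref{thm:key-extension} agrees with the ``square-root biased'' walk of Definition \ref{def:two-sided baised rw}.

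First I would verify $\Gamma$-invariance of both $q$ and $\nu$. Writing
\[
q_\xi(x,y) = \frac{\sqrt{m(y)/m(x)}}{\sum_{z\sim x}\sqrt{m(z)/m(x)}}\mathbf{1}_{\{(x,y)\in\xi,\, y\neq x\}} + \Bigl(1-\sum_{z\sim x,\,z\neq x}q_\xi(x,z)\Bigr)\mathbf{1}_{\{y=x\}},
\]
the kernel is expressed entirely in terms of ratios $m(z)/m(x)$ and the incidence relation ``$(x,y)\in\xi$,'' both of which are preserved by the diagonal action of $\Gamma$ thanks to Lemma \ref{lem:haar}. Hence $q_{\gamma\xi}(\gamma x,\gamma y)=q_\xi(x,y)$ and $\nu_{\gamma\xi}(\gamma x)=\nu_\xi(x)$.

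Next, I would verify that $x\mapsto m(x)\nu_\xi(x) = c\sum_{z\sim x}\sqrt{m(x)m(z)}$ is stationary for $q_\xi$. This is really reversibility: for $x\neq y$,
\[
m(x)\nu_\xi(x)\,q_\xi(x,y) \;=\; c\sqrt{m(x)m(y)}\,\mathbf{1}_{\{(x,y)\in\xi\}},
\]
which is symmetric in $(x,y)$, so detailed balance holds. This same symmetry computation immediately yields
\[
q_\xi^\leftarrow(x,y) \;=\; \frac{\nu_\xi(y)m(y)}{\nu_\xi(x)m(x)}\,q_\xi(y,x) \;=\; q_\xi(x,y),
\]
so the two-sided object produced by Theorem \ref{thm:key-extension} is exactly the walk of Definition \ref{def:two-sided baised rw}.

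With these checks in place, Theorem \ref{thm:key-extension} directly delivers the $\mathcal{T}$-invariance of $\mathbf{\Theta}$ restricted to $\mathcal{I}_{\mathbb{Z}}$, and the probability-measure claim follows from the normalization hypothesis $\sum_{i=1}^L \mathbf{E}[\nu_\xi(o_i)]=1$; such a $c>0$ exists because local finiteness of $G$ and quasi-transitivity of $\Gamma$ bound each $\nu_\xi(o_i)$ by a deterministic constant, so the sum is finite. There is no substantive obstacle here: the corollary is a bookkeeping specialization of Theorem \ref{thm:key-extension}, with the only minor subtlety being the identification $q_\xi^\leftarrow=q_\xi$, which is precisely what makes the symmetric ``square-root'' weighting the right choice for producing a reversible, and hence two-sided stationary, biased walk.
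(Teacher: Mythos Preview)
Your proposal is correct and matches the paper's approach exactly: the paper's proof is the single sentence ``This is immediate from Theorem \ref{thm:key-extension},'' with the verifications of $\Gamma$-invariance, stationarity of $m(x)\nu_\xi(x)$, and the identity $q_\xi^\leftarrow=q_\xi$ already carried out in the text preceding the corollary. You have simply spelled out those checks in more detail.
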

\begin{proof}
	This is immediate from Theorem \ref{thm:key-extension}. 
\end{proof}

\begin{proposition}\label{prop:light cluster is recurrent}
	With the same notation as in Corollary \ref{cor:two-sided biased rw}, if $C$ is a light cluster of $\xi$, then the ``square-root biased" random walk determined by $q_\xi$ on $C$ is positive recurrent. 
\end{proposition}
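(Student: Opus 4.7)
The plan is to reduce positive recurrence to the standard criterion for reversible Markov chains: an irreducible reversible chain is positive recurrent if and only if its reversing measure has finite total mass. The walk defined by $q_\xi$ is reversible with reversing measure $\pi(x)=\nu_\xi(x)m(x)=c\sum_{z\sim x}\sqrt{m(z)m(x)}$, as one checks from \eqref{eq:biased transition probability} using the symmetry $\sqrt{m(x)m(y)}=\sqrt{m(y)m(x)}$; loops of course preserve reversibility. By construction, $q_\xi(x,y)>0$ only when $(x,y)\in\xi$ or $y=x$, so the walk never leaves the cluster of its starting vertex. On the connected set $C$ the walk is therefore irreducible.

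Next I would show that $\pi(C)<\infty$. The key estimate is that $m(z)/m(x)$ is uniformly bounded over neighbors $z\sim x$. Indeed, every $\gamma\in\Gamma_x$ permutes the neighbors of $x$, so $\Gamma_xz\subset\{w:w\sim x\}$ and hence $|\Gamma_xz|\le\deg(x)$; together with $|\Gamma_zx|\ge 1$, Lemma~\ref{lem:haar} yields
\[
\frac{m(z)}{m(x)}=\frac{|\Gamma_xz|}{|\Gamma_zx|}\le\deg(x)\le\Delta,
\]
where $\Delta$ is the maximum degree of $G$ (finite by local finiteness and quasi-transitivity). Consequently
\[
\pi(C)=c\sum_{x\in C}\sum_{z\sim x}\sqrt{m(z)m(x)}\le c\,\Delta^{3/2}\sum_{x\in C}m(x),
\]
which is finite precisely because $C$ is light.

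Finally, an irreducible Markov chain admitting a finite stationary measure is positive recurrent (normalize $\pi|_C$ to get a stationary probability distribution), which completes the argument. There is no real obstacle here: once one notices that the walk is trapped in $C$ and one has the trivial bound $m(z)/m(x)\le\Delta$ for neighbors, light-ness of $C$ does all the work. The only point that requires a small check (rather than a conceptual idea) is the boundedness of $m(z)/m(x)$ for $z\sim x$, which is where the quasi-transitivity and local finiteness of $G$ enter.
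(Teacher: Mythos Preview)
Your proof is correct and follows essentially the same approach as the paper: both show that the reversing (stationary) measure of the walk on $C$ has finite total mass by bounding $m(z)/m(x)$ for neighbors and then invoking lightness of $C$. Your treatment is in fact slightly more explicit, giving the concrete bound $m(z)/m(x)\le\deg(x)$ via $\Gamma_xz\subset\{w:w\sim x\}$, whereas the paper simply asserts the existence of such a constant from quasi-transitivity.
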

\begin{proof}
	Since $\Gamma$ is quasi-transitive, there exists a constant $M>0$ such that for any $x\sim y$, $\frac{1}{M}\leq \frac{m(y)}{m(x)}\leq M$. Let $D$ be the maximal degree of $G$.
	Then consider the induced network on $C$ (described just before formula \eqref{eq:biased transition probability}), the total conductance is finite:
	\[
	\sum_{e\in C}{c(e)}\leq \sum_{x\in C}{D\sqrt{M}m(x)}<\infty,
	\]
	where in the last inequality we use the fact $C$ is light. 
	Therefore $C$ is positive recurrent w.r.t. $q_\xi$. 
\end{proof}

\begin{proposition}\label{prop:transience for heavy clusters}
	Suppose $\Gamma\subset \textnormal{Aut}(G)$ is quasi-transitive and nonunimodular, if Bernoulli $(p)$ percolation on $G$ has infinitely many heavy clusters a.s., then these heavy clusters are transient  for both simple random walk and the ``square-root biased" random walk determined by $q_\xi$ in the formula \eqref{eq:biased transition probability}. 
\end{proposition}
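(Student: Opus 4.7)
The plan is to reduce to the unimodular case via the finite-union-of-levels structure of Section 4, apply the Benjamini--Lyons--Schramm transience theorem there, and propagate the resulting finite-energy flow to every heavy cluster using the robust invariance framework of \cite{HPS1999}. By Proposition \ref{prop:finite levels}, the hypothesis of infinitely many heavy clusters produces a finite union of levels $G_n = \bigcup_{i=1}^n L_{x_i}$ and a connected component $H$ of $G_n$ on which Bernoulli$(p)$ has infinitely many infinite clusters almost surely. By Proposition \ref{prop:quasi-transitive for finite union of levels}, $\Gamma'_H$ acts quasi-transitively and unimodularly on $H$, and coexistence of infinitely many infinite clusters forces $H$ to be nonamenable; the generalization of the Benjamini--Lyons--Schramm transience theorem (noted after Theorem \ref{thm:transience of supercritical infinite clusters}) then gives that almost surely every infinite Bernoulli$(p)$ cluster $T'$ of $H$ is transient for simple random walk, so carries a unit flow to infinity with finite Dirichlet energy in the unit-conductance network.

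Next, form the $\Gamma$-invariant subgraph $U := \bigcup_{\gamma \in \Gamma} \gamma H$ of $G$, whose connected components are the $\Gamma$-translates of $H$, and consider the $\Gamma$-invariant property
\[
Q(C) \ := \ \bigl\{\, C \text{ contains an infinite connected subgraph lying inside } U\,\bigr\}.
\]
A direct inspection shows that $Q$ is robust: removing any single edge from an infinite connected subgraph of $C$ lying in $U$ leaves at least one infinite piece still in $U$, and the reverse direction is immediate by inclusion. Any infinite Bernoulli cluster on $H$ is itself such a subgraph of some infinite cluster of $G$, so some infinite cluster of $G$ satisfies $Q$ with positive probability. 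Since $p_c < p_h$ by \cite{Tom2017} and multiple heavy clusters coexist, $p \in (p_c, p_u]$, so Theorem \ref{thm:robust} yields that almost surely every infinite cluster of $G$ satisfies $Q$; in particular every heavy cluster $C$ contains an infinite connected subgraph sitting inside a single translate $\gamma H$.

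Finally, given such a $C$ and an infinite connected subgraph $T \subset \gamma H \cap C$, let $T'$ be the Bernoulli$(p)$ cluster of $T$ inside $\gamma H$; then $T' \supset T$ is infinite and transient by the first step, and $T' \subset C$. Take a unit flow $\theta$ from some vertex of $T'$ to infinity with finite Dirichlet energy in the unit-conductance network, and extend it by zero on $C \setminus T'$; the extended flow has unchanged Dirichlet energy in the unit-conductance network on $C$, giving transience of simple random walk on $C$. Since $\gamma H$ is a finite union of levels, $m$ takes only finitely many positive values on $\gamma H$, so the square-root biased conductances $c(e) = \sqrt{m(u)m(v)}$ are bounded above and below by positive constants on $T'$. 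Hence the Dirichlet energy of $\theta$ in the biased network is comparable to its unit-conductance energy and therefore finite, proving transience of $C$ for the biased walk as well.

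The main obstacle is the middle step: upgrading the existence of transient Bernoulli sub-clusters on the single component $H$ produced by Proposition \ref{prop:finite levels} into the statement that \emph{every} heavy cluster of $G$ picks one up. The robust invariance framework of Theorem \ref{thm:robust} handles this cleanly without relying on the deeper indistinguishability theorem of Section 6, which would otherwise produce a circular dependency on Theorem \ref{thm:heavy clusters are indis}.
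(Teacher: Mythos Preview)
Your overall strategy is sound and close to the paper's, but there is a genuine gap in the middle step. You assert that the connected components of $U := \bigcup_{\gamma \in \Gamma} \gamma H$ are precisely the translates $\gamma H$, and then use this to conclude that any infinite connected subgraph of $C$ lying in $U$ must lie in a single $\gamma H$. This claim is not justified and is false in general. The point is that $H$ is a component of the induced subgraph on $G_n = \bigcup_{i=1}^n L_{x_i}$, while $\gamma H$ is a component of the induced subgraph on $\gamma G_n = \bigcup_{i=1}^n L_{\gamma x_i}$, a \emph{different} finite union of levels whenever $\gamma \notin \Gamma'$. If the level sets $\{m(x_1),\ldots,m(x_n)\}$ and $\{m(\gamma x_1),\ldots,m(\gamma x_n)\}$ overlap, the translates $H$ and $\gamma H$ can share vertices, and the component of $U$ containing them is strictly larger than either. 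Iterating such overlaps, an infinite connected subgraph inside $U$ may traverse infinitely many levels, so your subsequent claims that $T' \subset \gamma H$ is an infinite Bernoulli cluster of $\gamma H$ and that $m$ is bounded on $T'$ both break down.

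The gap is easy to repair: replace your property $Q$ by ``there exists $\gamma \in \Gamma$ such that $C \cap \gamma H$ has an infinite connected component.'' This is $\Gamma$-invariant and robust for the same reasons you give, and it delivers exactly the conclusion you need. That said, the paper's route is shorter: it observes directly that \emph{transience} (for either walk) is itself a robust $\Gamma$-invariant property. Once a single infinite $G$-cluster is shown to be transient for both walks---by Rayleigh monotonicity from a transient infinite $H_n$-cluster, exactly as in your first and last paragraphs---Theorem~\ref{thm:robust} immediately propagates this to all infinite clusters, with no need for the auxiliary property $Q$ or the flow-extension argument.
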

\begin{proof}
	By Proposition \ref{prop:finite levels} there exists  some finite union of levels $L_{x_i},i=1,\ldots,n$ such that there exists a connected component $H_n$ of $G_n:=\bigcup_{i=1}^{n}L_{x_i}$ with the property that  Bernoulli $(p)$ percolation on $H_n$ has infinitely many infinite clusters. Since $\Gamma'_{H_n}$ is quasi-transitive and unimodular, simple random walk is transient on infinite clusters of  Bernoulli $(p)$ percolation on $H_n$, see for example Proposition 3.11 of \cite{LS1999a} (its proof can be easily adapted to quasi-transitive case). Since conductance $c(e)=\sqrt{m(e^-)m(e^+)}$ is bounded on $H_n$, the random walk determined by $q_\xi$ is also transient on infinite clusters of  Bernoulli $p$-percolation on $H_n$. Therefore by Rayleigh's monotonicity principle, there exists some infinite cluster of Bernoulli $(p)$ percolation on $G$ such that simple random walk and the ``square-root biased" random walk determined by $q_\xi$ are both transient. Since transience for simple  random walk and the ``square-root biased" random walk  are both $\Gamma$-invariant robust properties, Theorem \ref{thm:robust}   yields the desired conclusion.
\end{proof}	
\begin{remark}\label{rem:remark after prop heavy transient}
	For general insertion-and-deletion tolerant percolation processes, by Corollary 5.6 and Remark 5.11 of \cite{Timar2006} we can show that with positive probability there is at least one heavy cluster that is transient for both simple random walk and the ``square-root biased" random walk.
\end{remark}

If $p_h(G,\Gamma)=p_u$, then there is no $p\in[0,1]$ such that there are infinitely many heavy clusters for Bernoulli$(p)$ percolation. We conjecture that in this case if  there is a unique infinite cluster for Bernoulli$(p)$ percolation a.s., then the unique infinite cluster is also transient for the ``square-root biased" random walk determined by $q_\xi$. Moreover the unique infinite cluster is transient for simple random walk; see Proposition \ref{prop: transience for bernoulli percolation clusters}.\\

In Remark 3.12 of \cite{LS1999a}, it was conjectured that if there are almost surely infinitely many infinite clusters for Bernoulli percolation, then almost surely every infinite cluster is transient (for simple random walk). Moreover there were examples of $\Gamma$-invariant insertion-tolerant percolation processes showing that infinite clusters can be recurrent for simple random walk in the case $\Gamma$ is nonunimodular. An explicit example by Russell Lyons is as follows: 
\begin{example}\label{example: insertion-tolerance}
	Let $T$ be a regular tree with degree $3$ and $\xi$ be a distinguished end of $T$. Let $\Gamma$ be the group of automorphisms that fixes the end $\xi$. Let every vertex of $T$ be connected to precisely one of its offspring (as measured from $\xi$), each with probability $1/2$. Let $\omega_1$ denote the configuration. Then every cluster of $\omega_1$ is a ray. Now for each edge $e=(x,y)\in E(T)\backslash\omega_1$ where $y$ is an offspring of $x$, let $n$ be the graph distance from $x$ to the highest vertex of its ray in $\omega_1$. Next we insert the edge $e$ with probability $\frac{1}{2^{n+1}}$. Let $\omega_2$ be the configuration gotten from $\omega_1$ by applying the above procedure independently for each edge $e=(x,y)\in E(T)\backslash\omega_1$. Then  $\omega_2$ is a $\Gamma$-invariant insertion-tolerant percolation process. It is easy to see that every cluster of $\omega_2$ is infinite and recurrent for simple random walk. 
\end{example}

We now show that the conjecture in Remark 3.12 of \cite{LS1999a} holds. Unlike the short proof of Proposition \ref{prop:transience for heavy clusters}, the proof of transience for all infinite Bernoulli percolation clusters  is much longer. Moreover this transience result is not needed for the proof of the main theorems.

First we briefly review  existing results. Suppose $G$ is a nonamenable transitive graph. For $p$ close to $1$, the anchored expansion constants of the infinite clusters of Bernoulli$(p)$ percolation are positive a.s.\ \cite{CP2004}, whence the liminf speeds of simple random walk on the infinite clusters are positive a.s.\ \cite{V2000}. If moreover $G$ is unimodular, then the speed of simple random walk on an infinite cluster exists and is positive \cite[Theorem 4.4]{BLS1999} for every Bernoulli$(p)$ percolation on $G$  with  $p>p_c$. In particular, this implies that the infinite clusters of  Bernoulli$(p)$ percolation on $G$  with  $p>p_c$ are transient. Proposition 3.11 of \cite{LS1999a} shows that the same is true for any $\textnormal{Aut}(G)$-invariant insertion-tolerant percolation process that has infinitely many infinite clusters a.s. 

Now we consider the case $G$ is nonunimodular and quasi-transitive. For $p>p_c$ and a delayed simple random walk on the infinite clusters of every Bernoulli$(p)$ percolation on $G$, we do not know whether the speed of the delayed simple random walk exists  since it is not shift invariant as in the unimodular case. However, the infinite clusters of  Bernoulli$(p)$ percolation on $G$  with  $p>p_c$ are still transient. 

\begin{proposition}\label{prop: transience for bernoulli percolation clusters}
	Suppose $G$ is nonunimodular and quasi-transitive and $p>p_c$. Then the infinite clusters of  Bernoulli$(p)$ percolation on $G$  are transient. 
\end{proposition}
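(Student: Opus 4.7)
The approach is a case analysis on the type and number of infinite clusters at $p$, combined with a monotone coupling argument to reduce the unique-cluster case to a multi-cluster case at lower parameter.

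Suppose first that Bernoulli$(p)$-percolation has infinitely many infinite clusters almost surely. Since heaviness is a robust invariant property, Theorem~\ref{thm:robust} says these clusters are either all heavy or all light. In the all-heavy regime Proposition~\ref{prop:transience for heavy clusters} already yields transience for simple random walk, and we are done. Suppose instead that there is almost surely a unique infinite cluster $C_\infty$; by Proposition~\ref{prop:unique cluster is heavy}, $C_\infty$ is heavy. Pick $p_0\in (p_c,p)$ and use the canonical coupling $\omega_{p_0}\subset \omega_p$. Every infinite cluster of $\omega_{p_0}$ is contained as a subgraph in $C_\infty$, so Rayleigh's monotonicity principle lifts transience at $p_0$ to transience of $C_\infty$. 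If $p_h(G,\Gamma)<p_u$, one can simply take $p_0\in (\max(p_c,p_h(G,\Gamma)),p_u)$ and invoke the all-heavy case at $p_0$; otherwise (in particular if $p_h(G,\Gamma)=p_u$) one must pick $p_0\in (p_c,p_h(G,\Gamma))$, which lands in the light-cluster regime.

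The remaining and main case is $p\in (p_c,p_h(G,\Gamma))$ with infinitely many light clusters. Here the square-root biased random walk from Corollary~\ref{cor:two-sided biased rw} is only positive recurrent on light clusters by Proposition~\ref{prop:light cluster is recurrent}, so the biased-walk strategy of Proposition~\ref{prop:transience for heavy clusters} is unavailable. Moreover, since $m(\cdot)$ is bounded on any finite union of levels while a light cluster $C$ satisfies $\sum_{x\in C} m(x)<\infty$, the cluster meets every such union in only finitely many vertices, so Proposition~\ref{prop:quasi-transitive for finite union of levels} cannot be applied directly either. I would instead exploit the fact that an infinite light cluster descends through infinitely many distinct levels of ever-smaller modular weight: orienting each cluster edge from the heavier to the lighter endpoint produces a tree-like oriented subgraph in which each vertex has a random number of lighter neighbours, and supercriticality of $p$ should make the expected branching strictly greater than one. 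From this one should build a unit flow with finite energy from a fixed vertex to infinity, proving transience.

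The main obstacle is making this descending branching argument quantitative in the quasi-transitive nonunimodular setting. The natural tool is the tilted mass transport principle (Proposition~\ref{prop:TMTP}): one transports mass along descending edges of $C$, weighted by the modular function $\Delta_{\Gamma,\mu}$, in order to compare an effective offspring count to a Haar-weighted version of cluster density. One then expects nonamenability of $G$ together with Hutchcroft's bound $p_c<p_h(G,\Gamma)$ to force enough descending branches to guarantee the desired flow. Once the light case is settled this way, the case analysis closes and every infinite Bernoulli$(p)$-percolation cluster is transient for simple random walk at every $p>p_c$.
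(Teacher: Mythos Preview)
Your reduction is essentially the paper's: by the HPS simultaneous-uniqueness result every infinite $p$-cluster contains an infinite $p_0$-cluster for any $p_0\in(p_c,p)$, so Rayleigh monotonicity reduces the whole proposition to showing transience of the light infinite clusters for some fixed $p\in(p_c,p_h(G,\Gamma))$. The paper does exactly this (it does not even need the separate heavy-cluster case, since that is subsumed by the reduction). So far so good.

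The gap is the light-cluster case itself. What you have written there is a heuristic, not a proof. Three concrete problems:
\begin{enumerate}
\item Orienting edges from heavier to lighter endpoint does \emph{not} give a tree-like subgraph in general; levels are separating layers of positive thickness and the cluster can wander within and between them in complicated ways. The paper works instead with the separating layers $L_n(v)$ and slabs $L_{m,n}(v)$ and counts, for each $n$, the vertices of $L_{-n}(v)$ reachable from $v$ by an open path in $L_{-n,\infty}(v)$ that touches $L_{-n}(v)$ only at its endpoint. The key quantitative input is Lemma~\ref{lem: intersection tend to infinity}: this count tends to infinity almost surely on the event that $C(v)$ is infinite and light. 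Your proposal has no analogue of this statement, and TMTP alone does not give it; the proof (following Tim\'ar) uses insertion and deletion tolerance in an essential way.
\item ``Expected branching $>1$'' is not enough for transience. Even for a genuine Galton--Watson tree, supercriticality gives infiniteness, not transience; one needs $\mathrm{br}(T)>1$. The paper gets transience via Lemma~\ref{lem:transient for branching}, a strengthening of Tim\'ar's Lemma~4.1 that handles bounded dependence within each generation and concludes $p_c(T)<1$ (hence $\mathrm{br}(T)>1$) with positive probability.
\item The passage from ``some branching structure inside $C(v)$ is transient'' to ``$C(v)$ is transient for simple random walk'' requires an actual embedded subgraph quasi-isometric to the tree. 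The paper achieves this with the $k$-good box construction $B_x(i;r)$: one first uses Lemma~\ref{lem: intersection tend to infinity} to choose the depth $i$ and radius $r$ uniformly over orbits so that each box is $k$-good with probability at least $q/2$, then builds the random tree generation by generation out of these boxes, checks the bounded-dependence hypothesis of Lemma~\ref{lem:transient for branching}, and finally observes that the union of the boxes is roughly isometric to the tree, so Rayleigh gives transience of $C(o)$ with positive probability. Theorem~\ref{thm:robust} then upgrades this to almost-sure transience of every infinite cluster.
\end{enumerate}
Your TMTP suggestion for the light case is not obviously salvageable: the modular weight of a light cluster is finite by definition, so any tilted transport of mass along descending edges has bounded total flow, which is the wrong direction for producing a flow of finite energy to infinity. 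The paper's argument is genuinely a multi-scale geometric construction, not a mass-transport computation.
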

The proof of Proposition \ref{prop: transience for bernoulli percolation clusters} is a simple modification of the proof of Theorem 4.3 of \cite{Timar2006}. 

First we point out that Lemma 4.1 of \cite{Timar2006} can be slightly strengthened to (the conditions are the same while the conclusion is slightly stronger):
\begin{lemma}\label{lem:transient for branching}
	Consider a random rooted tree with the following properties. Fix  some $p>0$ and define $O_0:=\{o\}$, where $o$ is the root. If a generation $O_m$ is already given, then the number of children that the vertices in $O_m$ will have depends only on $O_m$ (and not on the past). Each vertex of $O_m$ has at least $k$ children with probability $\geq p$ and $0$ children otherwise. Furthermore there is a positive integer $\alpha$ such that given any generation $O_m=\{v_1,\ldots,v_n\}$, if we let $X_i$ be the number of children of $v_i$, then for each $i\in\{1,\ldots,n\}$, $X_i$ is independent of $X_j$ for all but at most $\alpha$ of them.. Denote by $O_{m+1}$ the set of children of the vertices in $O_m$. 
	
	Then the tree is transient with positive probability whenever $kp>1$.
\end{lemma}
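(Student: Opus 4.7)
The statement strengthens Lemma 4.1 of \cite{Timar2006}, which under the same hypotheses concluded only that the tree is infinite with positive probability. My plan is to upgrade infiniteness to transience by appealing to Lyons's percolation characterization of the branching number: for any tree $T$, one has $\mathrm{br}(T)\ge\lambda$ if and only if Bernoulli-$(1/\lambda)$ bond percolation on $T$ leaves the root in an infinite cluster with positive probability, and $T$ is transient for simple random walk iff $\mathrm{br}(T)>1$. Thus it is enough to exhibit some $\lambda>1$ such that $(1/\lambda)$-percolation on the random tree $T$ survives at the root with positive probability.

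Fix $\lambda\in\bigl(1,(kp)^{1/k}\bigr)$, which is nonempty because $kp>1$. Generate $T$ and independent Bernoulli-$(1/\lambda)$ bond percolation on $T$ jointly, and define an auxiliary random tree $T''$ as follows. For each vertex $v$ of $T$ with at least $k$ children, fix $k$ specific children (say the first $k$ in some canonical labeling); keep all $k$ of these edges in $T''$ if every one of them survives the percolation, and otherwise declare $v$ a leaf in $T''$. Since $T''$ is contained in the percolation cluster of the root in $T$, it suffices to show that $T''$ is infinite with positive probability.

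The key technical step is to check that $T''$ satisfies the hypotheses of the present lemma with parameters $(k,p',\alpha)$, where $p':=p\lambda^{-k}$. Each vertex of $T''$ has offspring number either $0$ or exactly $k$ by construction. Conditional on $v$ having at least $k$ children in $T$, the $k$ designated child-edges all survive the percolation with probability $(1/\lambda)^k$, so the unconditional probability that a vertex has $\ge k$ children in $T''$ is at least $p\lambda^{-k}=p'$. The dependency graph of the offspring variables in $T''$ is a subgraph of the dependency graph of the offspring variables in $T$, because the percolation bits attached to edges emanating from distinct vertices are mutually independent; hence in each generation every offspring variable in $T''$ is still independent of all but at most $\alpha$ others. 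Finally, $kp'=kp\,\lambda^{-k}>1$ by the choice of $\lambda$.

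Applying Lemma 4.1 of \cite{Timar2006} in its original, weaker form to $T''$ with parameters $(k,p',\alpha)$ yields that $T''$, and hence the percolation cluster of the root in $T$, is infinite with positive probability. Averaging over $T$, there is a positive-probability event on which the conditional percolation-survival probability is positive, so $\mathrm{br}(T)\ge\lambda>1$ on that event, proving $T$ is transient with positive probability. The main delicate point of the plan is the choice of $\lambda<(kp)^{1/k}$, which balances the percolation cost $\lambda^{-k}$ against the offspring parameter $k$ so that the thinned tree $T''$ remains supercritical in the sense $kp'>1$; one also has to verify carefully that forcing all $k$ selected edges to survive simultaneously (rather than taking the usual percolation cluster directly) is what restores the required ``0-or-at-least-$k$'' offspring structure without inflating the dependency parameter $\alpha$.
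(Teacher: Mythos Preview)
Your proof is correct and follows essentially the same strategy as the paper: run Bernoulli percolation on $T$ with a parameter $q=1/\lambda$ chosen so that $kpq^{k}>1$, build a subtree by keeping a vertex only when all $k$ designated child-edges are open, apply Tim\'ar's Lemma~4.1 to this thinned tree to get infiniteness with positive probability, and then deduce $p_c(T)<1$, hence $\mathrm{br}(T)>1$, hence transience. The paper's version differs only cosmetically (it first reduces to offspring exactly $k$ or $0$ rather than explicitly selecting $k$ children as you do), and one small caveat is that your ``iff'' for transience versus $\mathrm{br}(T)>1$ should really be the one-sided implication $\mathrm{br}(T)>1\Rightarrow$ transient, which is all you use.
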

\begin{proof}
	We may assume that every vertex has exactly $k$ children with probability $p$ and $0$ children otherwise. Pick some $q\in(0,1)$ sufficiently close to $1$ such that $kpq^k>1$. Let $T$ be a random tree as stated in the Lemma. Consider a Bernoulli$(q)$ percolation on $T$ and construct a new random rooted tree $T'$ as follows: Let $V'$ be the set of vertices $x$ with the property that  it has exactly $k$ children $x_1,\ldots,x_k$ and every edge $(x,x_i)$ is open in the Bernoulli$(q)$ percolation on $T$. $T'$ is the subtree of $T$ induced by $V'\cup\{o\}$. Notice $T'$ is also a random rooted tree satisfies the property of Lemma 4.1 of \cite{Timar2006}, whence $T'$ is infinite with positive probability. This implies that $p_c(T)<1$ with positive probability, whence $\textnormal{br}(T)>1$ with positive probability (\cite[Theorem 5.15]{LP2016}) and in particular $T$ is transient with positive probability(\cite[Theorem 3.5]{LP2016}). 
\end{proof}	

We now adopt some notations from \cite{Tom2017,Timar2006}. Suppose $G=(V,E)$ is a locally finite, connected graph and  $\Gamma\subset \textnormal{Aut}(G)$ is nonunimodular and acts quasi-transitively on $G$. Let $\mathcal{O}=\{o_1,\ldots,o_L\}$ be a complete set of representatives of $G/\Gamma$. Let $a=\mu$ be as in Lemma \ref{lem:harmonicity of modular fcn} and then the modular function $\Delta(x,y):=\Delta_{\Gamma,a}(x,y)$ is harmonic for simple random walk. Let $\rho$ be a random vertex picked from $\mathcal{O}$ with ditribution $a$.

For each $s\leq t$ and $v\in V$ we define the slab
\[
S_{s,t}(v):=\{ x\in V: s\leq \log \Delta(v,x) \leq t    \}.
\]
We also define \[
t_0:=\sup\{\log\Delta(v,u):u,v\in V, u\sim v \}.
\]

We define the separating layers 
\[
L_n(v):=\{x\in V: (n-1)t_0\leq \log\Delta(v,x) \leq nt_0  \}
\]
and half spaces $H_n^+(v):=\bigcup_{m\geq n}L_m(v)$ and $H_n^-(v):=\bigcup_{m\leq n}L_m(v)$. We also define $L_{m,n}(v):=\bigcup_{k=m}^{n}L_k(v)$. 

For each $v\in V$, $-\infty\leq m\leq k\leq n\leq \infty$ we define 
\[
X_k^{m,n}(v):=|\{x\in L_k(v): v\stackrel{L_{m,n}(v)}{\longleftrightarrow}x \}|
\]
and 
\[
\widetilde{X}_k^{m,n}(v):=|\{x\in L_k(v): v\stackrel{L_{m,n}(v)}{\longleftrightarrow}x \textnormal{ by an open path with only } x\in L_k(v) \}|,
\]
where  $\{ v\stackrel{L_{m,n}(v)}{\longleftrightarrow}x\}$ denotes the event that $v$ is connected to $x$ by an open path in the subgraph $L_{m,n}(v)$.

We also need a modification of Lemma 4.2 from \cite{Timar2006} as follows.
\begin{lemma}\label{lem: intersection tend to infinity}
	Let $G$ be a nonunimodular quasi-transitive graph and $v$ be a vertex of $G$. Consider Bernoulli$(p)$ percolation on $G$ that has light infinite clusters a.s. Then given the event that $C(v)$ is an infinite light cluster, $\widetilde{X}_{-n}^{-n,\infty}(v)\rightarrow\infty$ as $n\rightarrow\infty$.
\end{lemma}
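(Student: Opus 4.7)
The plan is to adapt the strategy of Lemma~4.2 of \cite{Timar2006}, combining it with the lightness hypothesis, to upgrade the conclusion from the total-hit count to the first-entry count $\widetilde{X}$.

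The starting observation is that, given $\omega$, $\widetilde{X}_{-n}^{-n,\infty}(v)$ equals the number of $x\in L_{-n}(v)$ adjacent via an open edge of $\omega$ to the set $D_n$, where $D_n$ is the connected component of $v$ in $\omega$ restricted to the induced subgraph on $L_{-n+1,\infty}(v)$.  Since adjacent vertices of $G$ differ in $\log\Delta$ by at most $t_0$, only vertices of $D_n$ lying in the bottom layer $L_{-n+1}(v)$ can contribute such down-edges.  On the event $\{C(v)\text{ is light infinite}\}$ we have $D_n\uparrow C(v)$, and the bound $m(y)\ge c\, e^{-(n-1)t_0}m(v)$ for $y\in D_n$ together with $\sum_{y\in C(v)}m(y)<\infty$ yields $|D_n|<\infty$ for each $n$ yet $|D_n|\to\infty$.

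I would then argue by contradiction: suppose there exist $K\in\mathbb{N}$ and an event $A\subseteq\{C(v)\text{ light infinite}\}$ of positive probability on which $\widetilde{X}_{-n_j}^{-n_j,\infty}(v)\le K$ along a random subsequence $n_j\to\infty$.  At each such $n_j$, closing the at most $K$ open edges between $D_{n_j}$ and $L_{-n_j}(v)$ would detach $v$ from $C(v)\setminus D_{n_j}$.  I would apply the tilted mass transport principle (Proposition~\ref{prop:TMTP}) with the harmonic choice $a=\mu$ of Lemma~\ref{lem:harmonicity of modular fcn} to the $\Gamma$-diagonally-invariant function $f_n(x,y)=\mathbf{1}\{x,y \text{ in the same cluster of }\omega,\ y\text{ is a first-entry into }L_{-n}(x)\text{ from }x\}$, which yields an identity relating $\mathbf{E}[\widetilde{X}_{-n}^{-n,\infty}(\rho)]$ to a dual count weighted by $\Delta(\rho,a)\asymp e^{nt_0}$.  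Combined with the insertion/deletion tolerance of Bernoulli percolation (used to flip a bounded number of edges around the first-entry vertices at positive probability cost), this should produce a quantitative inconsistency with the bounded first-entry count on $A$.

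The main obstacle will be this last quantitative step.  The RHS of the TMTP identity involves vertices in $L_n(\rho)\cap C(\rho)$ \emph{above} $\rho$, whose number is small by lightness; this easily gives an upper bound on $\mathbf{E}[\widetilde{X}_{-n}^{-n,\infty}(\rho)]$, not a lower bound along almost every sample path.  I expect the cleanest resolution will be an alternative branching-style exploration of $D_n$ from $v$ downward, level by level: showing that, conditionally on the exploration so far, each vertex on the current frontier sends an open edge into the next level with probability bounded away from $0$ (using independence of the remaining edges in Bernoulli percolation), one can invoke Lemma~\ref{lem:transient for branching} to dominate $\widetilde{X}_{-n}^{-n,\infty}(v)$ below by the $n$-th generation of a supercritical branching process.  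Since that branching process survives with positive probability, and the survival event contains (up to null sets) the event that $C(v)$ is infinite, this diverges on $A$, contradicting the bound~$K$.
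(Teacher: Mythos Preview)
Your opening plan---adapt Tim\'ar's Lemma~4.2 argument with the event $E(k)$ augmented by lightness---is exactly what the paper does, and your second paragraph correctly isolates why lightness matters (it makes each $D_n$ finite).  But you then abandon that thread, and the two replacements you propose both have real gaps.

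The branching approach in your last paragraph does not work as stated.  Going down one level at a time, all you can say is that each frontier vertex has at least one child with some probability $p_0<1$; in the language of Lemma~\ref{lem:transient for branching} this is $k=1$, so $kp_0<1$ and the lemma gives nothing.  To get $kp>1$ one must jump many levels at once so that a single frontier vertex can have many first-entry descendants---but establishing that there \emph{are} many first-entry descendants far below is precisely the content of the lemma you are trying to prove.  (Indeed the paper uses the dependence in the opposite direction: the present lemma feeds into the branching construction in Proposition~\ref{prop: transience for bernoulli percolation clusters}, not the other way around.)  Your final sentence compounds this: even granting that the exhaustive level-by-level exploration survives whenever $C(v)$ is infinite, survival alone does not force generation sizes to diverge; that requires supercriticality, which you have not established.

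The TMTP approach you yourself diagnose correctly: it controls $\mathbf{E}[\widetilde X_{-n}^{-n,\infty}(\rho)]$ from above via the small number of cluster vertices at level $+n$, which is the wrong direction.

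What actually works is the deletion-tolerance idea you mention in passing and then drop.  The paper's proof is one sentence: run Tim\'ar's Lemma~4.2 argument verbatim with $E(k)$ redefined as $\{C(v)$ infinite and light, and $\widetilde X_{-n}^{-n,\infty}(v)\le k$ for infinitely many $n\}$.  Tim\'ar's argument is a finite-energy argument, not a branching or mass-transport one: assuming $\mathbf{P}(E(k))>0$, one uses deletion tolerance to close the at most $k$ boundary edges at a bad level and derives a contradiction.  The only place lightness enters is exactly where you identified it---to guarantee each $D_n$ is finite so that the edge-closing really disconnects $v$ from infinity.  You should go back to Tim\'ar's proof and carry out that argument rather than the two detours.
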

\begin{proof}
	The proof is almost the same as the one of Lemma 4.2 in \cite{Timar2006}. Just replace the event $E=E(k)$ there by the event that $C(v)$ is infinite and light and there are infinitely many $n$ such that $\widetilde{X}_{-n}^{-n,\infty}(v)\leq k$. 	
\end{proof}

\begin{proof}[Proof of Proposition \ref{prop: transience for bernoulli percolation clusters}]
	Denote by $\Gamma$ the group $\textnormal{Aut}(G)$ in this proof. Since $\Gamma$ is nonunimodular and acts quasi-transitively on $G$, one has $p_c<p_h$ by \cite{Tom2017}. Moreover there is no infinite cluster at $p_c$; see\cite{Tom2016}. For $p\in(p_c,p_h)$ Bernoulli$(p)$
	percolation has infinitely many infinite light clusters. Since for all $p_c<p_1<p_2\leq 1$, every infinite $p_2$-cluster contains an infinite $p_1$-cluster \cite[Theorem 4.1.3]{HPS1999}, by Rayleigh's monotonicity principle it suffices to show for any fixed $p\in (p_c,p_h)$ the light infinite clusters of Bernoulli$(p)$
	percolation  are transient for simple random walk. In the following we fix some $p\in (p_c,p_h)$ and some $o\in V$ such that there exists a neighbor $o'$ of $o$ such that $\log\Delta(o,o')=-t_0$. Recall $[o]:=\Gamma o$ denotes the orbit of $o$.

	First by quasi-transitivity and maximum principle for the harmonic function $\Delta(v,\cdot)$ there exists $n_0>0$ such that for each $v\in V$, there exists a path $P_v=(v_0=v,v_1,\ldots,v_n)$ such that $n\leq n_0$, $\log \Delta(v,v_n)\leq -t_0$, $ v_n\in [o]$ and $\Delta(v,v_i)$ is decreasing. In particular in the transitive case for $v=o$ we can choose $n_0=1$ and  $P_v=(o,o')$. 
	Indeed, we just need to choose one such path for each $x\in \mathcal{O}$. For every vertex $y\notin \mathcal{O}$, suppose $y$ is in the same orbit as $x$ under $\Gamma$, namely for some (arbitrarily fixed) $\gamma=\gamma_{x,y}\in\Gamma$,  $y=\gamma x$.  Then we let $P_y=\gamma P_x$ (the choice of $\gamma$ does not matter although it may change the choice of $P_y$).  Denote the end point $v_n$ of $P_v$ by $v'$.
	
	For $x\in \mathcal{O}$, define the graph $G'(x)$ to be the union of the path $P_x$ and the subgraph induced by the vertices in half space $H_{0}^-(x')$. For   $x\in \mathcal{O}$ and $y\in [x]\backslash\{x\}$ let $G'(y):=\gamma G'(x)$, where $\gamma=\gamma_{x,y}$ is the one fixed as in the above definition of $P_y$.
	
	We call the cluster $C(x)$  \textbf{nice} if $C(x)=C(x)\cap {G'(x)}$ and $C(x')\cap H_0^-(x')=C(x')\cap {G'(x')}$. Let $F(x)$ be the event that $C(x)$ is infinite, light and nice. By insertion and deletion tolerance and $p\in(p_c,p_h)$ we have $q_x:=\mathbf{P}_p(F(x))>0$. Moreover $q_x$  depends only on the orbit of $x$, whence $q:=\inf\{q_x:x\in V \}=\min\{q_x:x\in\mathcal{O} \}>0$. 
	
	Given a vertex $x\in L_{-j}(o)$ (if there are two such $j's$, we take the larger one), the vertex $x'$ and $r,i\geq1$, define $B_x(i;r):=B(x,r)\cap G'(x)\cap L_{-j-i,-j}(o)$. One can see Figure 1 on page 2354 of \cite{Timar2006} for an illustration of  $B_x(i;r)$ in the transitive case. We say a vertex $v\in B_x(i;r)\backslash P_x$ belongs to the \textbf{side boundary} of $B_x(i;r)$ if there is some edge $(v,w)$ such that $w\in H_{-j-i}^+(o)$ and $w\notin B_x(i;r)$. 
Let $C(x)|_{B_x(i;r)}$ denote the open cluster of $x$ for percolation restricted to the finite graph $B_x(i;r)$.  
	Let $Y_{-i}(o,x):=\{ v\in L_{-j-i}(o) : v\stackrel{L_{-i,0}(x)}{\longleftrightarrow}x \textnormal{ by an open path with only }v\in L_{-j-i}(o) \}$ and $\widetilde{X}_{-i}(o,x):=|Y_{-i}(o,x)|$.
	Given $k\geq 1$, we say that ${B_x(i;r)}$ is $k$-\textbf{good} if $\widetilde{X}_{-i}(o,x)\geq k$ and the side boundary of $B_x(i;r)$ is disjoint from $C(x)|_{B_x(i;r)}$.
	
	\textbf{Claim:} for any given $k$ there is a uniform choice of $i,r$ such that $B_x(i;r)$ is $k$-good with probability at least $q/2$ for every $x\in H_0^-(o)$. 
	
	Suppose $x\in L_{-j}(o)$. From Lemma \ref{lem: intersection tend to infinity}  for any given $\lambda>1$, there exists a positive integer $i_x=i_x(\lambda)$ such that given the event $F(x)$ occurs, with probability at least $\frac{3}{4}$, $\widetilde{X}_{-i+1}^{-i+1,\infty}(x)\geq \lambda k$ if $i= i_x$. Let $Z_i(x)$ denote the set of vertices that contribute to $\widetilde{X}_{-i+1}^{-i+1,\infty}(x)$. Now condition on  $\widetilde{X}_{-i+1}^{-i+1,\infty}(x)\geq \lambda k$, we have  $\widetilde{X}_{-i}(o,x)\geq k$ with probability at least $\frac{3}{4}$. Indeed, if $|Z_i(x)\cap L_{-j-i}(o)|\geq k$, then $\widetilde{X}_{-i}(o,x)\geq k$. Otherwise, $|Z_i(x)\cap L_{-j-i+1}(o)|\geq (\lambda-1)k$. For each $v\in Z_i(x)\cap L_{-j-i+1}(o)$, consider the part of the path $P_v$ stopped at its first hitting time of $L_{-j-i}(o)$ and call it $P_v(o,x)$. If $P_v(o,x)$ is also open, then the endpoint (other than $v$) belongs to $Y_{-i}(o,x)$. Notice the event $\widetilde{X}_{-i+1}^{-i+1,\infty}(x)\geq \lambda k$ is independent of $P_v(o,x)$ is open for every $v\in Z_i(x)\cap L_{-j-i+1}(o)$ and two such paths are disjoint if $d(v,v')\geq 2n_0$. Therefore if $\lambda$ is sufficiently large, with probability at least $\frac{3}{4}$ there are at least $k$ such paths are open. Thus $\widetilde{X}_{-i}(o,x)\geq k$ with probability at least $\frac{3}{4}$ conditioned on $\widetilde{X}_{-i+1}^{-i+1,\infty}(x)\geq \lambda k$.
	Combining the above we have $\widetilde{X}_{-i}(o,x)\geq k$ has probability at least $(\frac{3}{4})^2q_x\geq \frac{9q}{16}$. Now since $C(x)$ is light a.s., $C(x)$ intersect each slab $L_{-i}(x)$ with at most finitely many vertices a.s. Thus there exists a large integer $r=r_x$ such that probability of the event that $C(x)$ intersect the boundary of $B(x,r)$ at some vertex $v\in H_{-i}^+(x)$ is at most $\frac{q}{16}$. 
	
	Notice if  $F(x)$ occurs, $ \widetilde{X}_{-i}(o,x)\geq k $  and $C(x)$ does not intersect the boundary of $B(x,r)$ at some vertex $v\in H_{-i}^+(x)$, then $B_x(i;r)$ is $k$-good. Hence $B_x(i;r)$ is $k$-good with probability at least $\frac{q}{2}$. Notice $i,r$ depend only on the orbit of $x$ and this proves the \textbf{Claim} above. 
	
	Now we can proceed to construct a random tree as in the proof of Theorem 4.3 of \cite{Timar2006}. We fix $k$ such that $\frac{kq}{2D^{n_0}}>1$ and then some $i,r$ such that $B_x(i;r)$ is $k$-good with probability at least $q/2$ for every $x\in H_0^-(o)$. 
	The vertex $o\in V(G)$ corresponds to the root $\widehat{o}$  of $T$, the $0$ generation $O_0$ of $T$. If $B_o(i;r)$ is $k$-good, then it contains at least $k$ vertices in $L_{-i}(o)$ that can be connected to $o$ by an open path with only one endpoint lying in $L_i(o)$. For each of these vertices $x$, add a child $\widehat{x}$ to $\widehat{o}$ in $T$ and let these vertices $\widehat{x}$ constitute the first generation $O_1$ of $T$. If $B_o(i;r)$ is not $k$-good, let $\widehat{o}$ have $0$ children and $T=\{\widehat{o}\}$. 
	
	Suppose we have defined the $g$th generation $O_g$ of $T$ such that each vertex $\widehat{x}\in O_g$ corresponds to a vertex $x\in L_{-gi}(o)$. We can partition $O_g$ such that $\widehat{x},\widehat{y}$ are in the same class of the partition iff for the corresponding $x,y\in L_{-gi}(o)$, we have $x'=y'$.
	Each set of the partition has at most $D^{n_0}$ elements, where $D$ the maximum degree of a vertex in $G$.
	Now choose one vertex in each class of the partition uniformly and independently; call the set of chosen vertices \textit{parental} vertices. If $\widehat{x}$ is not parental, then let it have $0$ children.
	
	If $\widehat{x}$ is parental, assign $f$ children to it iff $B_x(i;r)$ is $k$-good and $f:=\widetilde{X}_{-i}(o,x)\geq k$; assign $0$ children to $\widehat{x}$ otherwise.  Assigning children in this way for each $\widehat{x}\in O_g$ and these children constitute $O_{g+1}$. Note that a vertex has at least $k$ children with probability at least $\frac{q}{2D^{n_0}}$. Notice different  vertices in $O_{g+1}$ will also correspond to different vertices in $L_{-(g+1)i}(o)$.
	
	It is straightforward to verify that the tree $T$ constructed above satisfies the condition of Lemma \ref{lem:transient for branching} and the interested reader can refer to the proof of Theorem 4.3 in \cite{Timar2006} for a similar verification. Hence $T$ is transient with positive probability. Notice the open cluster restricted in the subgraph induced by the corresponding vertices $x$ and $B_x(i,r)$
	is roughly isometric to $T$, whence it is also transient. By Rayleigh's monotonicity principle $C(o)$ is transient with positive probability. Since transience is a robust invariant property, Theorem \ref{thm:robust} yields the desired conclusion. 
\end{proof}

\section{Proofs of the Main Theorems}

The proof of Theorem \ref{thm:heavy clusters are indis} follows a similar strategy as the proof of Theorem \ref{thm:indisuni} in \cite{LS1999a}, and the ``square-root biased" two-sided random walk in Definition \ref{def:two-sided baised rw} will play the role of two-sided delayed simple random walk in the proof of Theorem \ref{thm:indisuni}.

\begin{definition}\label{def:pivotal edge}
	Suppose $\Gamma\subset\textnormal{Aut}(G)$ is quasi-transitive and $(\mathbf{P},\xi)$ is a $\Gamma$-invariant bond percolation process on $G$. Suppose $\mathcal{A}$ is a $\Gamma$-invariant property. An infinite cluster $C$ of $\xi$ is called of type $\mathcal{A}$ if $C\in\mathcal{A}$; otherwise $C$ is called of type $\neg\mathcal{A}$. 
	
	Suppose that there is an infinite cluster $C$ of $\xi$ and $e\in E\backslash C$ such that the connected component $C'$ of $\xi\cup\{e\}$ that contains $C$ has a type different from the one of $C$. Then $e$ is called a pivotal edge for $(C,\xi)$.
\end{definition}

The following lemma is proved for transitive graphs in \cite{LS1999a} and the proof can be easily adapted to quasi-transitive ones. 
\begin{lemma}[Lemma 3.5 of \cite{LS1999a}]\label{lem:pivotal edge}
	Suppose $\Gamma\subset\textnormal{Aut}(G)$ is quasi-transitive. $(\mathbf{P},\xi)$ is an insertion-tolerant  $\Gamma$-invariant bond percolation process on $G$.  Suppose $\mathcal{A}\in \mathcal{F}_E$ is a $\Gamma$-invariant property. Assume that there is positive probability for coexistence of infinite clusters of type $\mathcal{A}$ and $\neg\mathcal{A}$. Then with positive probability, there is an infinite cluster $C$ of $\xi$ that has a pivotal edge. 
\end{lemma}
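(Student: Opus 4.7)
The plan is to argue by contrapositive: assume $\mathbf{P}(\textnormal{some infinite cluster of }\xi\textnormal{ has a pivotal edge}) = 0$, and derive that coexistence of the two types has probability zero. Let $\mathcal{P}$ denote this no-pivotal event (assumed null), and let $\mathcal{F}_{u,v} := \{u \in \textnormal{infinite }\mathcal{A}\textnormal{-cluster},\ v \in \textnormal{infinite }\neg\mathcal{A}\textnormal{-cluster}\}$. By the coexistence hypothesis and countability of $V$, one has $\mathbf{P}(\mathcal{F}_{u,v}) > 0$ for some $u, v \in V$; fix such a pair and a finite path $P = (u = x_0, x_1, \ldots, x_n = v)$ in $G$, writing $e_i := (x_{i-1}, x_i)$, $F_j := \{e_1, \ldots, e_j\}$, and $\xi_j := \xi \cup F_j$.

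The crux is to upgrade the no-pivotal assumption to all of $\xi_0, \ldots, \xi_n$ simultaneously: I claim that for every finite $F \subset E$, almost surely $\xi \cup F$ has no pivotal edge. Let $\mathcal{P}_F := \{\omega : \omega \cup F \in \mathcal{P}\}$, and extend the notation of the paper by writing $\Pi_F \mathcal{A} := \{\omega \cup F : \omega \in \mathcal{A}\}$, so that $\Pi_F = \Pi_{e_k} \circ \cdots \circ \Pi_{e_1}$ when $F = \{e_1, \ldots, e_k\}$. Directly from the definitions $\Pi_F \mathcal{P}_F \subseteq \mathcal{P}$, so $\mathbf{P}(\Pi_F \mathcal{P}_F) = 0$. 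If $\mathbf{P}(\mathcal{P}_F)$ were positive, iterating insertion tolerance over the edges of $F$ would give $\mathbf{P}(\Pi_F \mathcal{P}_F) > 0$, a contradiction. Applying this to $F_0, F_1, \ldots, F_n$ and intersecting with $\mathcal{F}_{u,v}$ yields an event of positive probability on which none of the configurations $\xi_j$ admits a pivotal edge.

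On this event, let $C_j$ denote the cluster of $u$ in $\xi_j$. Since $C_j \supseteq C_u$, each $C_j$ is infinite, hence carries a well-defined type. The edge $e_{j+1}$ is either already in $\xi_j$ (in which case $\xi_{j+1} = \xi_j$ and $C_{j+1} = C_j$), or it lies in $E \setminus C_j$; in the latter case the no-pivotal property of $\xi_j$ applied to the infinite cluster $C_j$ and the edge $e_{j+1}$ forces $\textnormal{type}(C_{j+1}) = \textnormal{type}(C_j)$. Inductively $\textnormal{type}(C_n) = \textnormal{type}(C_0) = \mathcal{A}$. Running the symmetric argument starting from $v$ yields $\textnormal{type}(D_n) = \neg\mathcal{A}$ for the cluster $D_n$ of $v$ in $\xi_n$. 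But $\xi_n \supseteq P$, so $u$ and $v$ share the same cluster $C_n = D_n$, which cannot simultaneously have both types; this contradicts $\mathbf{P}(\mathcal{F}_{u,v}) > 0$.

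The main obstacle I anticipate is the bookkeeping in the second paragraph: formalizing that almost-sure absence of a pivotal edge in $\xi$ propagates to almost-sure absence of a pivotal edge in $\xi \cup F$ for every finite $F$, using only the definition of insertion tolerance together with the containment $\Pi_F \mathcal{P}_F \subseteq \mathcal{P}$. Once this is in hand, the walk along $P$ is a clean deterministic argument that invokes the no-pivotal property at each step.
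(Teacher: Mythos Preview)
Your proof is correct and is essentially the contrapositive formulation of the argument in Lyons--Schramm \cite{LS1999a}; the paper itself does not reproduce a proof but simply cites that reference, noting the adaptation to the quasi-transitive setting is straightforward. Your key step---using insertion tolerance to transfer ``no pivotal edge'' from $\xi$ to every $\xi\cup F$, and then walking along a path to force a contradiction in types---is exactly the mechanism of the original proof, so there is no substantive difference in approach.
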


\begin{proof}[Proof of Theorem \ref{thm:heavy clusters are indis}]
	We proceed by contradiction. Suppose there exists some $\Gamma$-invariant property $\mathcal{A}$ such that there is positive probability for coexistence of heavy clusters in $\mathcal{A}$ and $\neg\mathcal{A}$. 
	Let $\mathcal{O}=\{o_1,\ldots,o_L \}$ be a complete set of representatives of $G/\Gamma$. Fix a Haar measure $m$ on $\Gamma$.  By Lemma \ref{lem:pivotal edge} we may assume with  positive probability  there are pivotal edges of heavy clusters of type $\mathcal{A}$ since otherwise one can replace $\mathcal{A}$ by $\neg\mathcal{A}$. For every $x\in V$ fix some $r_x>0$ such that with positive probability, the cluster $C(x)$ is heavy, of type $\mathcal{A}$ and there is an edge $e$ at graph distance at most $r_x$ from $x$ that is pivotal for $C(x)$. Notice we can choose $r_x$ only depending on the orbit of $x$. Let $r:=\max\{r_x:x\in\mathcal{O} \}$.

	Fix $\varepsilon>0$. 
	Define $A_x$ to be the event that the cluster of $x$ in $\xi$
	is heavy and of type $\mathcal{A}$. Let $A_x'$ be an event that depends on only finitely many edges such that $\mathbf{P}_p[A_x \Delta A_x' ]<\varepsilon$. Let $R_x$ be large enough such that $A_x'$ only depends on edges in the ball $B(x,R_x)$. Let $R:=\max\{R_x:x\in\mathcal{O} \}$.

	Take a random root $\rho\in\mathcal{O}$ independent of the Bernoulli percolation $(\mathbf{P}_p,\xi)$ with distribution
	$\mathbf{P}(\rho=o_i)=\nu_\xi(o_i)$, where $\nu_\xi(x)=c\sum_{z\sim x}\sqrt{\frac{m(z)}{m(x)}}$ and $c$ is such that $\sum_{i=1}^{L}\mathbf{E}[\nu_\xi(o_i)]=1$. 
	
	Let $W$ be a ``square-root biased" two-sided random walk given by Definition \ref{def:two-sided baised rw} started from the random root $\rho$. For $n\in\mathbb{Z}$, let $e_n\in E$ be an edge chosen uniformly among the edges within distance  at most $r$ from $W(n)$. Recall in Corollary \ref{cor:two-sided biased rw} $\mathbf{\Theta}$ is the joint law of $(\xi,W)$. Write $\widehat{\mathbf{P}}$ for the joint law of $\rho$, $\xi$, $W$ and $\langle e_n:n\in\mathbb{Z}\rangle$. 
	
	Given $e\in E$, let $\mathscr{P}_e$ be the event that $\xi\in A_\rho$ and $e$ is pivotal for $C(\rho)$. Let $\mathscr{E}_e^n$ be the event that $e_n=e$ and $W(j)$ is not an endpoint of $e$ whenever $-\infty<j<n$. Recall the insertion operation $\Pi_e$ in Definition \ref{def: insertion-deletion}. For any measurable event $\mathscr{B}$, $e\in E$ and $n\geq1$ one has that
	\[
	\widehat{\mathbf{P}}[\mathscr{E}_e^n\cap (\mathscr{B}\backslash\Pi_e\mathscr{B})]=\frac{1-p}{p}\widehat{\mathbf{P}}[\mathscr{E}_e^n\cap \Pi_e(\mathscr{B}\backslash\Pi_e\mathscr{B})]\leq \frac{1-p}{p}\widehat{\mathbf{P}}[\mathscr{E}_e^n\cap \Pi_e\mathscr{B}],
	\]
	where the first equality uses the definition of $\mathscr{E}_e^n$ and the fact that $n\geq1$.

	Then for all measurable event $\mathscr{B}$ one has
	\begin{eqnarray*}
		\widehat{\mathbf{P}}[\mathscr{E}_e^n\cap\mathscr{B}]&=&
		\widehat{\mathbf{P}}[\mathscr{E}_e^n\cap (\mathscr{B}\backslash\Pi_e\mathscr{B})]+\widehat{\mathbf{P}}[\mathscr{E}_e^n\cap (\mathscr{B}\cap\Pi_e\mathscr{B})]\\
		&\leq&\frac{1-p}{p}\widehat{\mathbf{P}}[\mathscr{E}_e^n\cap \Pi_e\mathscr{B}]+\widehat{\mathbf{P}}[\mathscr{E}_e^n\cap \Pi_e\mathscr{B}]\\
		&=&\frac{1}{p}\widehat{\mathbf{P}}[\mathscr{E}_e^n\cap \Pi_e\mathscr{B}].
	\end{eqnarray*}

	Applying the above inequality with $\mathscr{B}=A_\rho'\cap \mathscr{P}_e$ one has 
	\begin{equation}\label{eq:3.4}
		\widehat{\mathbf{P}}[\mathscr{E}_e^n\cap \Pi_eA_\rho'\cap \Pi_e\mathscr{P}_e]
		\geq p\widehat{\mathbf{P}}[\mathscr{E}_e^n\cap A_\rho'\cap \mathscr{P}_e]=p\widehat{\mathbf{P}}[\mathscr{E}_e^n\cap A_\rho'\cap \mathscr{P}_{e_n}].
	\end{equation}
	Define $\mathscr{E}^n:=\cup_{e\in E}\mathscr{E}_e^n$, and $\mathscr{E}_R^n:=\cup_{e\in E\backslash B(\rho,R)}\mathscr{E}_e^n$ and note that these are disjoint unions.
	
	By definition of $\mathscr{P}_e$, $\Pi_e\mathscr{P}_e\subset \neg A_\rho$ since the insertion of pivotal edge $e$ would change the type of $C(\rho)$. Also $\Pi_e A_\rho'\subset  A_\rho'$ for any edge $e\in E\backslash B(\rho,R) $ since $A_\rho'$ only depends edges within graph distance $R$ from $\rho$. Thus
	\begin{eqnarray}\label{eq:3.5}
		\widehat{\mathbf{P}}[A_\rho'\cap \neg A_\rho]
		&\geq& \widehat{\mathbf{P}}[\mathscr{E}_R^n\cap A_\rho'\cap \neg A_\rho]=\sum_{e\in E\backslash B(\rho,R)} \widehat{\mathbf{P}}[\mathscr{E}_e^n\cap A_\rho'\cap \neg A_\rho]\nonumber\\
		&\geq&\sum_{e\in E\backslash B(\rho,R)} \widehat{\mathbf{P}}[\mathscr{E}_e^n\cap \Pi_eA_\rho'\cap \Pi_e\mathscr{P}_e]\nonumber\\
		&\stackrel{(\ref{eq:3.4})}{\geq}&p\sum_{e\in E\backslash B(\rho,R)} \widehat{\mathbf{P}}[\mathscr{E}_e^n\cap A_\rho'\cap \mathscr{P}_{e_n}]=p\widehat{\mathbf{P}}[\mathscr{E}_R^n\cap A_\rho'\cap \mathscr{P}_{e_n}]\nonumber\\
		&\geq&p\widehat{\mathbf{P}}[\mathscr{E}_R^n\cap A_\rho\cap \mathscr{P}_{e_n}]-p\varepsilon.
	\end{eqnarray}
	
	Since there are infinitely many heavy clusters a.s., Proposition \ref{prop:transience for heavy clusters} yields that $W$ is transient on the event that $C(\rho)$ is heavy, whence one can fix $n$ sufficiently large such that the probability that $C(\rho)$ is heavy and $e_n\in B(\rho,R)$ is smaller than $\varepsilon$, whence $\widehat{\mathbf{P}}[A_\rho\cap (\mathscr{E}^n-\mathscr{E}_R^n)]\leq \varepsilon$. Then by (\ref{eq:3.5}) one has for $n$ large 
	\begin{equation}\label{eq:3.6}
		\varepsilon\geq\widehat{\mathbf{P}}[A_\rho'\Delta  A_\rho]\geq \widehat{\mathbf{P}}[A_\rho'\cap \neg A_\rho]\geq p\widehat{\mathbf{P}}[\mathscr{E}^n\cap A_\rho\cap \mathscr{P}_{e_n}]-2p\varepsilon.
	\end{equation}
	
	Note by our choice of $r,e_n$, $\widehat{\mathbf{P}}[ A_\rho\cap \mathscr{P}_{e_0}]>0$. Conditioned on $A_\rho\cap \mathscr{P}_{e_0}$, transience of $W$ implies that there exists $m\leq0$ such that $W(m)$ is at graph distance $r$ to $e_0$ and $W(j)$ is at graph distance more than $r$ to $e_0$ for all $j<m$, in particular $W(j)$ is not incident to $e_0$, whence by choice of $r$, we have $\widehat{\mathbf{P}}[\mathscr{E}^m\cap A_\rho\cap \mathscr{P}_{e_m}]>0$. 
	
	Define $\mathscr{B}_m:=\cup_{x\in V}\mathscr{E}^m\cap A_\rho\cap \mathscr{P}_{e_m}\cap \{\rho=x \}$. Although event $\{\rho=x\}$ has zero probability under $\widehat{\mathbf{P}}$ for $x\in V\backslash \mathcal{O}$, the event $\mathscr{E}^m\cap A_\rho\cap \mathscr{P}_{e_m}\cap \{\rho=x \}$ is  well-defined just as the case $x\in\mathcal{O}$. 
	
	Notice  $\mathscr{B}_m$ is $\Gamma$-invariant and $\widehat{\mathbf{P}}[\mathscr{B}_m]=\widehat{\mathbf{P}}[\mathscr{E}^m\cap A_\rho\cap \mathscr{P}_{e_m}]$ for any $m\in\mathbb{Z}$.
	
	Let $\beta_{\xi,W}$ denote the law of $(e_n)_{n\in\mathbb{Z}}$ given $\xi,W$, then one has
	\begin{eqnarray}
		\widehat{\mathbf{P}}[\mathscr{B}_m]
		&=&\int_{2^E\times V^\mathbb{Z}\times E^\mathbb{Z}} \mathbf{1}_{[(\xi,W,(e_n)_{n\in\mathbb{Z}})\in\mathscr{B}_m]}d\widehat{\mathbf{P}}\nonumber\\
		&=&\int_{2^E\times V^\mathbb{Z}} \int_{E^\mathbb{Z}}\mathbf{1}_{[(\xi,W,(e_n)_{n\in\mathbb{Z}})\in\mathscr{B}_m]}d\beta_{\xi,W}((e_n)_{n\in\mathbb{Z}})d\mathbf{\Theta}(\xi,W)
	\end{eqnarray}
	
	Define  $F(\xi,W):=\int_{E^\mathbb{Z}}\mathbf{1}_{[(\xi,W,(e_n)_{n\in\mathbb{Z}})\in\mathscr{B}_m]}d\beta_{\xi,W}((e_n)_{n\in\mathbb{Z}})$. It is straightforward to check that $F$ is a $\Gamma$-invariant measurable function,  whence Corollary \ref{cor:two-sided biased rw} yields that $\widehat{\mathbf{P}}[\mathscr{B}_m]$ does not depend on $m$. Thus  $\widehat{\mathbf{P}}[\mathscr{E}^n\cap A_\rho\cap \mathscr{P}_{e_n}]=\widehat{\mathbf{P}}[\mathscr{B}_n]$  does not depend on $n$.  Hence when $\varepsilon>0$ is sufficiently small, (\ref{eq:3.6}) gives a contradiction. This completes the proof.
\end{proof}

Given a trajectory $w\in V^{\mathbb{Z}}$,
for a set $C\subset V$ and $m,n\in\mathbb{Z},m<n$, write 
\[
\alpha_m^n(C)(w):=\frac{1}{n-m}\sum_{k=m}^{n-1}\mathbf{1}_{\{w(k)\in C\}}
\]
and 
\[
\alpha(C)(w):=\lim_{n\rightarrow\infty}\frac{1}{n}\sum_{k=1}^{n}\mathbf{1}_{\{w(k)\in C \}}
\]
when the limit exists, for the frequency of visits to $C$ by the trajectory $w$ on $G$. We don't need to define $\alpha(C)$ when the limit does not exist due to the following 
generalization of Lemma 4.2 of \cite{LS1999a} in the quasi-transitive setting. 
\begin{lemma}\label{lem:freq}
	Suppose $\Gamma\subset \textnormal{Aut}(G)$ is quasi-transitive. Let $\widehat{\mathbf{P}}$ be the probability measure given in Corollary \ref{cor:srw}.  Then there is a $\Gamma$-invariant measurable function  $f:2^V\rightarrow[0,1]$ with the following property: $\widehat{\mathbf{P}}$-a.s. $\alpha(C)$ exists and is equal to $f(C)$ for every cluster $C$.  The function $f$ is called the frequency function. 
\end{lemma}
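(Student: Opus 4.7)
The plan is to mirror the proof of Lemma 4.2 of \cite{LS1999a}, using the shift-invariant system from Corollary \ref{cor:srw} (and its two-sided extension in Corollary \ref{cor:two-sided srw}) in place of the stationary two-sided delayed simple random walk of the unimodular transitive case.

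First I would apply Birkhoff's ergodic theorem to the $\mathcal{S}$-invariant restriction of $\widehat{\mathbf{P}}$ to the $\Gamma$-invariant $\sigma$-field $\mathcal{I}$. For each $\Gamma$-invariant cluster property $\mathcal{B}$ (Definition \ref{def: invariant property}), the function $H_{\mathcal{B}}(\xi,w):=\mathbf{1}_{(w(0),\xi)\in\mathcal{B}}$ is $\Gamma$-invariant and bounded, so the empirical averages $\frac{1}{n}\sum_{k=0}^{n-1}\mathbf{1}_{(w(k),\xi)\in\mathcal{B}}$ converge $\widehat{\mathbf{P}}$-a.s.\ to an $\mathcal{I}$-measurable limit $\phi_{\mathcal{B}}$. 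Since the $\sigma$-field of $\Gamma$-invariant cluster properties is countably generated, I would select a countable separating family $\{\mathcal{B}_i\}_{i\ge 1}$; on a single $\widehat{\mathbf{P}}$-full-measure event $\Omega^*$ the limits $\phi_{\mathcal{B}_i}$ exist simultaneously for every $i$.

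On $\Omega^*$ I would extract the per-cluster frequency by decomposing
\[
\frac{1}{n}\sum_{k=0}^{n-1}\mathbf{1}_{(w(k),\xi)\in\mathcal{B}}=\sum_{C}\mathbf{1}_{C\in\mathcal{B}}\cdot\frac{1}{n}\sum_{k=0}^{n-1}\mathbf{1}_{w(k)\in C},
\]
where the outer sum runs over clusters $C$ of $\xi$. Since $\{\mathcal{B}_i\}$ separates cluster types, a countable inclusion-exclusion argument using properties that single out any prescribed cluster from the rest lets one invert this identity and conclude that $\alpha(C)$ exists on $\Omega^*$ for every cluster $C$, and is a measurable functional of the $\phi_{\mathcal{B}_i}$'s.

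Finally, I would verify that $\alpha(C)$ depends only on $C$ as a subset of $V$. $\Gamma$-invariance of $\widehat{\mathbf{P}}$ makes $\alpha$ a $\Gamma$-equivariant functional, and the shift-invariance from Corollary \ref{cor:two-sided srw}, combined with a re-rooting / mass-transport argument, forces two (possibly coupled) configurations realizing the same $C$ as a cluster to produce the same Birkhoff limit. Setting $f(A)$ equal to this common value whenever $A$ appears as a cluster, and extending measurably to all of $2^V$, yields the desired $\Gamma$-invariant measurable $f:2^V\to[0,1]$. The main obstacle is this last determinism step: showing the Birkhoff limit depends only on $C$ and not on the ambient configuration; it rests on the shift-invariance of Corollary \ref{cor:two-sided srw} together with $\Gamma$-invariance of $\widehat{\mathbf{P}}$, and is analogous in spirit to (though simpler than) the indistinguishability argument of Theorem \ref{thm:heavy clusters are indis}.
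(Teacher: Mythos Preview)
Your overall framework---invoke the shift-invariance of Corollaries \ref{cor:srw} and \ref{cor:two-sided srw}---matches the paper's, but two of your steps do not go through as written.

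First, the inversion step. Birkhoff gives you $\phi_{\mathcal{B}_i}$ for each fixed $\mathcal{B}_i$, and hence for each finite Boolean combination of the $\mathcal{B}_i$'s, but a configuration $\xi$ has infinitely many clusters, and there is no reason a fixed countable family of $\Gamma$-invariant properties lets you isolate an arbitrary cluster $C$ from \emph{all} the other clusters of $\xi$ by a finite Boolean combination. The paper avoids this entirely by following \cite{LS1999a} more literally: one defines $f$ directly via $\mathcal{Z}_\alpha:=\{C\subset V:\alpha_0^n(C)\to\alpha\ \mathbf{P}_\rho\text{-a.s.}\}$, setting $f(C):=\alpha$ on $\mathcal{Z}_\alpha$ and $0$ off $\mathcal{Z}:=\bigcup_\alpha\mathcal{Z}_\alpha$, and then proves the uniform Cauchy statement
\[
\lim_{n\to\infty}\ \max\bigl\{\,|\alpha_0^m(C)(w)-\alpha_0^k(C)(w)|:\ k,m\ge n,\ C\text{ a cluster of }\xi\,\bigr\}=0
\]
$\widehat{\mathbf{P}}$-a.s.\ (this is exactly equation (4.1) in the proof of Lemma 4.2 of \cite{LS1999a}). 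That gives existence of $\alpha(C)$ simultaneously for every cluster, with no inversion needed.

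Second, and more importantly, your determinism step misidentifies the mechanism. It is not re-rooting or mass transport; it is conditional independence. Using Corollary \ref{cor:two-sided srw} and the shift $\mathcal{T}$, the uniform Cauchy statement transfers to the two-sided walk, and one deduces that $\alpha_0^n(C)$ and $\alpha_{-n}^0(C)$ have the same a.s.\ limit $\alpha(C)$. But conditioned on $w(0)$, the forward trajectory $(w(k))_{k\ge 0}$ and the backward trajectory $(w(-k))_{k\ge 0}$ are independent under $\mathbf{P}_\rho^{\mathbb{Z}}$. Two conditionally independent random variables that are a.s.\ equal must be a.s.\ constant; hence $\alpha(C)$ is deterministic given $\xi$ and $w(0)$, which is exactly the statement that every cluster of $\xi$ lies in $\mathcal{Z}$. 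This independence argument is the crux of the proof (and of the original in \cite{LS1999a}); your sketch replaces it with a vague gesture that would not, by itself, force determinism.
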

\begin{proof}
	The proof follows a similar strategy  as the one of  Lemma 4.2 of \cite{LS1999a}. Let $\lambda$ be a probability measure on $\{o_1,\ldots,o_L\}$ with $\lambda(\{o_i\})=c_i$, where these $c_i$'s come from Corollary \ref{cor:srw}. Let $\rho$ be sampled from $\lambda$. Let $\mathbf{P}_{\rho}=\sum_{i=1}^{L}c_i\mathbf{P}_{o_i}$ denote the law of simple random walk starting from random vertex $\rho$. Then we have $\widehat{\mathbf{P}}=\mathbf{P}_p\times \mathbf{P}_{\rho}$. 
	
	For every $\alpha\in[0,1]$, let
	\[
	\mathcal{Z}_\alpha:=\left\{ C\subset V: \lim_{n\rightarrow\infty}\alpha_0^n(C)=\alpha, \  \mathbf{P}_{\rho}\textnormal{-a.s.} \right\}
	\]
	Note this definition does not depend on the choice of basepoint $\rho$. 
	
	Define $f(C):=\alpha$ when $C\in\mathcal{Z}_\alpha$ for some $\alpha\in[0,1]$. If $C\notin \mathcal{Z}:=\cup_{\alpha\in[0,1]}\mathcal{Z}_{\alpha}$, put $f(C):=0$.  As shown in the proof of Lemma 4.2 of \cite{LS1999a},  $f$ is measurable and $\Gamma$-invariant and for $\widehat{\mathbf{P}}$-a.e.\ $(\xi,w)\in \{0,1\}^V\times V^{\mathbb{N}}$,
	\begin{equation}\label{eq:4.1}
		\lim_{n\rightarrow\infty} \max\{|\alpha_0^m(C)(w)-\alpha_0^k(C)(w)|: k,m\geq n,C \textnormal{ is a cluster of }\xi  \}=0.
	\end{equation}
	It remains to show that for $\mathbf{P}_p$-a.s.\ $\xi$, every cluster of $\xi$ is in $\mathcal{Z}$. Actually let $A:=\{\xi: C\in \mathcal{Z}, \forall \textnormal{  cluster }C \textnormal{ of }\xi \}$, it's easy to see $A\in\mathcal{F}_V$. 
	
	Corollary \ref{cor:two-sided srw} and \eqref{eq:4.1} then yield that for $\mathbf{P}_p\times\mathbf{P}_\rho^{\mathbb{Z}}$-a.s. $(\xi,\widehat{w})$, 
	\begin{equation}\label{eq:4.2}
		\lim_{n\rightarrow\infty} \max\{|\alpha_0^m(C)(\widehat{w})-\alpha_0^k(C)(\widehat{w})|: k,m\geq n,C \textnormal{ is a cluster of }\xi  \}=0.
	\end{equation}
	
	By the shift invariance of $\mathcal{T}$, 
	\begin{multline*}
	2\max\{|\alpha_0^{2n}(C)-\alpha_0^n(C)|: C \textnormal{ is a cluster} \}\\
	=\max\{|\alpha_n^{2n}(C)-\alpha_0^n(C)|: C \textnormal{ is a cluster} \}
	\end{multline*}
	has the same law as $\max\{|\alpha_0^{n}(C)-\alpha_{-n}^0(C)|: C \textnormal{ is a cluster} \}$.

	Therefore by \eqref{eq:4.2} we have  
	\begin{equation}\label{eq: conv in probability}
	\max\{|\alpha_0^{n}(C)-\alpha_{-n}^0(C)|: C \textnormal{ is a cluster} \}\rightarrow0\textnormal{ in probability}. 
	\end{equation}
	
		By \eqref{eq:4.2} we also have $\lim_{n\rightarrow\infty}\alpha_0^{n}(C)$ exists. Similar argument shows that $\lim_{n\rightarrow\infty}\alpha_{-n}^{0}(C)$ also exists. Combining with the above equation  \eqref{eq: conv in probability} one has that a.s.
		$\lim_{n\rightarrow\infty}\alpha_0^{n}(C)=\alpha(C)=\lim_{n\rightarrow\infty}\alpha_{-n}^{0}(C)$ for every cluster $C$.
	
	However given $\xi$ and a cluster $C$ of $\xi$, $\alpha_0^n(C)$ is independent of $\alpha_{-n}^0(C)$ conditioned on $w(0)$, but both converge to $\alpha(C)$. Therefore  $\alpha(C)$ is a $\mathbf{P}_p\times\mathbf{P}_\rho^{\mathbb{Z}}$-a.s. constant. This completes the proof. 
\end{proof}

\begin{proof}[Proof of Theorem \ref{thm:connectivity decay}]
	Write $\mathbf{P}=\mathbf{P}_p$ and $\Gamma=\textnormal{Aut}(G)$. If $\Gamma$ is unimodular, this is just Theorem 4.1 of \cite{LS1999a}. In the following we assume that $\Gamma$ is nonunimodular. 
	
	Since there is a.s.\ more than one infinite cluster, $N(p)=\infty$ a.s. Since light and heavy infinite clusters cannot coexist a.s., we consider two cases separately. If these infinitely many infinite clusters are light a.s., then $\inf\{\mathbf{P}(x\leftrightarrow y):x,y\in V\}=0$. Indeed, if $\inf\{\mathbf{P}(x\leftrightarrow y):x,y\in V\}=c_1>0$, fix some $o\in V$ and let $L_n:=L_n(o)$. Then $\mathbf{P}[o\leftrightarrow L_n]\geq \inf\{\mathbf{P}(x\leftrightarrow y):x,y\in V\}=c_1>0$. Notice $\{o\leftrightarrow L_n\}$ is decreasing. Thus $$\mathbf{P}[C(o) \textnormal{ is heavy}]\geq \mathbf{P}\left[\bigcap_{n=1}^{\infty}\{ o\leftrightarrow L_n\}\right] \geq \liminf_{n\rightarrow\infty}\mathbf{P}[o\leftrightarrow L_n]\geq c_1>0.$$ This contradicts the assumption that these infinite clusters are light. 
	
	Now in the following we assume that there are infinitely many heavy clusters a.s. 
	Let $f$ be the frequency function given in Lemma \ref{lem:freq}. 
	Since $f$ is $\Gamma$-invariant, for each $\alpha\in[0,1]$, $Q_\alpha:=\{C\subset 2^V: f(C)\leq \alpha\}$ is a $\Gamma$-invariant  property, where for a cluster $C$ we also use $C$ to denote its vertex set.
	
	Let $A_\alpha$ be the event that there exists an infinite cluster that satisfies the property $Q_\alpha$. If $\mathbf{P}(A_\alpha)>0$, then $\mathbf{P}(A_\alpha)=1$ by ergodicity of $\mathbf{P}$. 
	
	Let $c:=\inf\{\alpha\in[0,1]:\mathbf{P}(A_\alpha)>0 \}$. 
	By definition of $A_\alpha$ and $c$ we have $f(C)\geq c$ for all infinite clusters $C$ $\mathbf{P}$-a.s. On the other hand, for each $t>c$, $\mathbf{P}(A_t)>0$ and then $\mathbf{P}(A_t)=1$ by ergodicity. Thus $\mathbf{P}$-a.s. there exists an infinite cluster $C$ such that $f(C)\leq t$, i.e. $C$ satisfies $Q_t$. Since  there are infinitely many heavy clusters a.s., Theorem \ref{thm:heavy clusters are indis} then implies that all these infinite clusters satisfy property $Q_t$. Since this is true for arbitrary $t>c$, it follows that $\mathbf{P}$-a.s. 
	\[
	f(C)=c,\ \textnormal{ for every infinite cluster }C.
	\]
	The rest proof of the first conclusion is almost the same as the one of Theorem 4.1 of \cite{LS1999a} except we use Lemma \ref{lem:freq} to get the a.s. equality $f(C)=\alpha(C)$ in this  quasi-transitive nonunimodular setting. 
	
	And  deriving $p_u=\overline{p}_{\textnormal{conn}}$ from the first conclusion is easy so we leave it to the reader.
\end{proof}

\begin{remark}
	At $p=p_u$, $ \inf_{x\in V}\mathbb{P}_{p_u}(o\leftrightarrow x)$ can be equal to zero or be positive. For example let $G$ be a nonamenable planar quasi-transitive graph with one end. Then $0<p_c<p_u<1$ (see Theorem 8.24 of \cite{LP2016}) and there is a unique infinite cluster $\mathbb{P}_{p_u}$-a.s. In this case by Harris-FKG inequality we know $\inf_{x\in V}\mathbb{P}_{p_u}(o\leftrightarrow x)>0$. Other examples include $\mathbb{T}_b$ with $b\geq3$ or $\mathbb{Z}^2*\mathbb{Z}_2$, where $*$ means the free product. They are transitive graphs with infinitely many ends and hence $p_u=1$. 
	
	In the following we will use $X\times Y$ to denote the Cartesian product of graphs $X$ and $Y$. 
	Consider $\mathbb{T}_b\times \mathbb{Z}$ where $\mathbb{T}_b$ is a regular tree with degree $b\geq3$. We know $0<p_c<p_u<1$ from \cite{Tom2017},\cite{BB1999}. Schonmann \cite{S1999b} and Peres \cite{P2000} showed that at $p=p_u$ there are infinitely many infinite clusters a.s. 
	Now from Theorem \ref{thm:connectivity decay} we know $ \inf_{x\in V}\mathbb{P}_{p_u}(o\leftrightarrow x)=0$ in this case. For the case $G=\mathbb{T}_1\times\cdots\times\mathbb{T}_n,n\geq 2$, where $\mathbb{T}_i$ are regular trees with degree at least $3$,  Theorem \ref{thm:connectivity decay} implies that $ \inf_{x\in V}\mathbb{P}_{p_u}(o\leftrightarrow x)=0$. In this case Hutchcroft \cite[Question 1.9]{Tom2017b} conjectured that $\mathbb{P}_{p_u}(o\leftrightarrow x)$ even decays exponentially in the graph distance  $d(o,x)$. 
\end{remark}

\section{Examples and questions}

\subsection{Examples 1}
Consider the regular tree $\mathbb{T}_b$ with degree $b\geq 3$. Let $\xi$ be a fixed end of $\mathbb{T}_b$ and $\Gamma_\xi\subset\textnormal{Aut}(\mathbb{T}_b)$ be the subgroup that fixes the end $\xi$. Then $\Gamma_\xi$ is transitive and nonunimodular. And $p_h(G,\Gamma_\xi)=1$. This can be seen by direct calculation (comparing to a branching process) or by Corollary \ref{cor:p_h equal limit of p_c}. Indeed since $p_u=1$, $\lim_{n\rightarrow\infty}p_c(G_n)\leq p_u$ holds and then $p_h(G,\Gamma_\xi)=\lim_{n\rightarrow\infty}p_c(G_n)$. Since $G_n$ are finite unions of levels, all its connected components are finite, whence $p_c(G_n)=1$ and then $p_h(G,\Gamma_\xi)=\lim_{n\rightarrow\infty}p_c(G_n)=1$.  More information about this example can be found in Section 8.1 of \cite{Tom2017}.  This gives an example with $p_c<p_h(G,\Gamma)=p_u=1$. 

Now consider the Cartesian product $G:=\mathbb{T}_b\times \mathbb{Z}^d$ with  $b\geq 3,d\geq1$. Let $\Gamma=\Gamma_\xi\times\textnormal{Aut}(\mathbb{Z}^d)$. Corollary 5.8 of \cite{Timar2006} implies that
$p_h(G,\Gamma)=p_u$ since the subgraph induced by any finite unions of levels is amenable. Peres \cite{P2000} showed that $N(p_h(G,\Gamma))=\infty$ a.s. Proposition \ref{prop:light at p_h} yields all these infinite clusters are light. This gives an example that $p_c<p_h(G,\Gamma)=p_u<1$ and $N(p_h(G,\Gamma))=\infty$. Here $p_c<p_h(G,\Gamma)$ is due to \cite{Tom2017}. 

\begin{question}
	Is there a graph $G$ and $\Gamma\subset\textnormal{Aut}(G)$ such that $\Gamma$ is quasi-transitive and nonunimodular,  $p_c<p_h(G,\Gamma)=p_u<1$ and $N(p_h(G,\Gamma))=1$ a.s.?
\end{question}

It was asked in \cite{HPS1999} whether there is any explicit example of transitive graph satisfying $p_c<p_h<p_u<1$. The first such examples are provided in \cite{Tom2017}; see section 8.2 there. The following examples are also devoted to this question. \textbf{Examples 2} are slightly simpler in the sense one don't need to consider anisotropic percolation. However \textbf{Examples 2} exhibit $p_h(G,\Gamma)<p_u(G)$ with  proper subgroup $\Gamma$. One might modify them to obtain examples with $\Gamma=\textnormal{Aut}(G)$. We will not do that and instead we point out \textbf{Examples 3} satisfying the restriction $\Gamma=\textnormal{Aut}(G)$.

\subsection{Examples 2}
To be consistent with the notation in \cite{LP2016}, we consider regular trees $\mathbb{T}_{b+1}$ with $b\geq 3$. Suppose $n_1,n_2$ are two positive integers such that $n_1+n_2=b$. We define a $(1,n_1,n_2)$-\textbf{orientation} of $\mathbb{T}_{b+1}$ like the  $n_1=1,n_2=2,b=3$ case in \cite{Tom2017}. To be precise, give a partial orientation of the edge set of $\mathbb{T}_{b+1}$ such that 
every vertex is incident to exactly one unoriented edge, has $n_1$ oriented edges emanating from it, and has $n_2$ oriented edges pointing into it. From now on we fix such an orientation.

Denote by $\Gamma_{(1,n_1,n_2)}\subset\textnormal{Aut}(\mathbb{T}_{b+1})$ the subgroup of automorphisms that preserve this orientation. It is easy to see that this subgroup $\Gamma_{(1,n_1,n_2)}$ acts transitively on $\mathbb{T}_{b+1}$. Moreover, if $q:=\frac{n_2}{n_1}\neq 1$, then $\Gamma_{(1,n_1,n_2)}$ is nonunimodular. Indeed, define $h(u,v)$ to be the \textbf{height  difference} as in \cite{Tom2017}:
for $u,v\in\mathbb{T}_{b+1}$ there is a unique simple path $r$ connecting them, suppose there are $m_1$ edges on $r$ that are crossed in the forward direction when moving from $u$ to $v$ along $r$ and $m_2$ edges that are crossed in the opposite direction, then $h(u,v):=m_1-m_2$.  Using Lemma \ref{lem:haar} it's easy to see $\Delta_{\Gamma}(u,v)=\frac{|\Gamma_vu|}{|\Gamma_uv|}=q^{h(u,v)}$ for $u\sim v$, where $\Gamma:=\Gamma_{(1,n_1,n_2)}$. Then by cocycle identity  $\Delta_{\Gamma}(u,v)=q^{h(u,v)}$ holds for all pairs $u,v\in\mathbb{T}_{b+1}$.

One has  $p_h(\mathbb{T}_4,\Gamma_{(1,1,2)})=\frac{2\sqrt{2}+1-\sqrt{4\sqrt{2}-3}}{6}$  by 
Proposition 8.1 of \cite{Tom2017} and  formula (8.1) there. 
The following proposition is a slight generalization of this result.
\begin{proposition}\label{prop:exact value of p_h}
	Suppose positive integers $n_1,n_2$ satisfy  $n_1+n_2=b$, then 
	\begin{equation}\label{eq:exact value of p_h}
		p_h(\mathbb{T}_{b+1},\Gamma_{(1,n_1,n_2)})= \frac{1+2\sqrt{n_1n_2}-\sqrt{(2\sqrt{n_1n_2}+1)^2-4(n_1+n_2)}}{2(n_1+n_2)}.
	\end{equation}
	
\end{proposition}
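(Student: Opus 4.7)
My plan is to follow the same branching-process argument that Hutchcroft uses for the $(n_1,n_2)=(1,2)$ case in the proof of Proposition 8.1 of \cite{Tom2017}: the cluster $C(o)$ of Bernoulli$(p)$ percolation on $\mathbb{T}_{b+1}$ is a 3-type Galton--Watson tree whose types $\{+,-,0\}$ label the direction of the parent-edge (above, below, or horizontal), and $p_h$ coincides with the critical value at which this branching process produces clusters of infinite $q$-mass with $q=n_2/n_1$. Locally the offspring counts depend only on the type: a vertex of type $+$, $-$, $0$ has $(n_1-1,n_2,1)$, $(n_1,n_2-1,1)$, $(n_1,n_2,0)$ up/down/horizontal children respectively, each independently open with probability $p$.

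The heaviness transition is governed by the $\theta$-tilted mean-offspring matrix
\[
M(\theta)=\begin{pmatrix}n_2 e^{-\theta}&(n_1-1)e^{\theta}&1\\ (n_2-1)e^{-\theta}&n_1 e^{\theta}&1\\ n_2 e^{-\theta}&n_1 e^{\theta}&0\end{pmatrix},
\]
whose $(T,T')$-entry records the mean number of type-$T'$ offspring of a type-$T$ parent scaled by $e^{\theta\Delta h}$. Standard branching-random-walk theory (or a direct generating-function recursion in the style of \cite{Tom2017}) identifies $p_h=1/\min_{\theta}\rho(M(\theta))$, i.e.\ the smallest $p$ for which $p\,\rho(M(\theta))>1$ for every~$\theta$.

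The remaining task is an eigenvalue computation. Cofactor expansion of $\det(\lambda I - M(\theta))$, together with $e^\theta e^{-\theta}=1$ and $(n_1-1)(n_2-1)=n_1 n_2-(n_1+n_2)+1$, collapses the $\theta$-dependence onto the single quantity $\sigma(\theta):=n_1 e^{\theta}+n_2 e^{-\theta}$ and yields
\[
\det\bigl(\lambda I-M(\theta)\bigr)=(\lambda+1)\bigl(\lambda^{2}-(\sigma+1)\lambda+(n_1+n_2)\bigr).
\]
Since the Perron root $\tfrac12\bigl(\sigma+1+\sqrt{(\sigma+1)^{2}-4(n_1+n_2)}\bigr)$ is increasing in $\sigma$ in the relevant range, minimising over $\theta$ reduces to minimising $\sigma(\theta)$; elementary calculus gives $\sigma^{\ast}=2\sqrt{n_1 n_2}$ at $\theta^{\ast}=\tfrac12\log(n_2/n_1)$. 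Substituting $\sigma=\sigma^{\ast}$ and rationalising $p_h=1/\lambda^{\ast}$ produces the stated closed form.

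The main obstacle is the identification $p_h=1/\min_\theta\rho(M(\theta))$ in the second paragraph --- this is where the heaviness transition actually lives, as opposed to the unweighted branching threshold $p_c=1/(n_1+n_2)$ that the untilted matrix ($\theta=0$ or $\theta=\log q$, where the spectral radius degenerates to $b$) would give. For $(n_1,n_2)=(1,2)$ this step is carried out in the proof of Proposition 8.1 of \cite{Tom2017} through a direct recursion on $q$-weighted subtree masses; the argument goes through verbatim for general $(n_1,n_2)$ since only the local offspring counts change, so no new probabilistic input is required. Once this is in place, the remainder is the short determinant-and-calculus computation above.
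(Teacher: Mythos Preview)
Your proposal is correct and is essentially a fleshed-out version of the first of the two approaches the paper sketches (the paper does not give a proof, only two remarks indicating how one could proceed). Your computation of the tilted mean matrix, its characteristic polynomial $(\lambda+1)(\lambda^2-(\sigma+1)\lambda+(n_1+n_2))$, and the minimisation over $\theta$ are all right, and you correctly isolate the identification $p_h=1/\min_\theta\rho(M(\theta))$ as the non-computational step borrowed from Proposition~8.1 of \cite{Tom2017}; this is exactly the content of the paper's Remark~\ref{rem:p_t=p_h}, which says to compute the tiltability threshold $p_t$ via the method of formula~(8.1) in \cite{Tom2017} and then invoke the $p_h=p_t$ argument from that proposition.

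For comparison, the paper's second suggested route (Remark~\ref{rem:simple method of calculate p_h}) is genuinely different: rather than tilting, it uses Corollary~\ref{cor:p_h equal limit of p_c} to write $p_h=\lim_n p_c(G_n)$ where $G_n$ is a union of $n$ consecutive levels, observes that each connected component of $G_n$ is a periodic tree $H_n$ arising as a directed cover of a finite graph $D_n$, and then computes $p_c(H_n)^{-1}=\mathrm{br}(H_n)=\lambda_*(A_n)$ as the Perron eigenvalue of the adjacency matrix $A_n$ of $D_n$. This reduces the problem to finding the limit of Perron eigenvalues of an increasing family of block-Toeplitz matrices. Your route is cleaner in that the $3\times3$ matrix $M(\theta)$ and a one-variable optimisation replace the limiting spectral problem; the second route has the conceptual advantage of not needing the $p_h=p_t$ identification, since Corollary~\ref{cor:p_h equal limit of p_c} supplies the probabilistic input directly.
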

We will not provide its proof but point out two ways to do it in the following two remarks.
\begin{remark}\label{rem:p_t=p_h}
	If one defines the \textbf{tiltability threshold} $p_t$ as in \cite{Tom2017}, using the same method of obtaining formula (8.1) in \cite{Tom2017}, one can calculate the exact value of $p_t$ and it is just the right-hand side of \eqref{eq:exact value of p_h}. Then one can use similar argument as in the proof of Proposition 8.1  of \cite{Tom2017} to obtain $p_h(\mathbb{T}_{b+1},\Gamma_{(1,n_1,n_2)})=p_t(\mathbb{T}_{b+1},\Gamma_{(1,n_1,n_2)})$. 
\end{remark}
\begin{remark}\label{rem:simple method of calculate p_h}
	Another way to prove Proposition \ref{prop:exact value of p_h} is using Corollary \ref{cor:p_h equal limit of p_c}. Notice that the finite union of consecutive levels $G_n$ are infinitely many copies of periodic trees $H_n$. These periodic trees are directed covers (\cite[Section 3.3]{LP2016})  of certain finite oriented graphs $D_n$. Notice $p_c(H_n)^{-1}=\textnormal{br}(H_n)=\textnormal{gr}(H_n)$ for periodic trees. Moreover let $A_n$ denote the adjacency matrix of $D_n$, then $\textnormal{gr}(H_n)=\lambda_*(A_n)$ (see the discussion on pp 83-84 of \cite{LP2016}), where $\lambda_*(A_n)$ denotes the largest positive eigenvalue  of the matrix $A_n$. Then  the reciprocal of $p_h(\mathbb{T}_{b+1},\Gamma_{(1,n_1,n_2)})$ equals the limit of $\lambda_*(A_n)$. Calculating the limit of $\lambda_*(A_n)$ then gives Proposition \ref{prop:exact value of p_h}. The calculation is a little bit long and we omit it here.

	Russell Lyons pointed out to me that the adjacency matrices  $A_n$ are parts of a block Toeplitz matrix $A$ and that \cite{EF2000} may be relevant. Indeed the formula from \cite{EF2000} does give exactly the reciprocal of \eqref{eq:exact value of p_h}. However, the theorem in \cite{EF2000}  does not include our matrix $A$. One might expect to extend the theorem in \cite{EF2000} in order to have a simple way to find the limit of $\lambda_*(A_n)$. 
\end{remark}

Now we consider $\mathbb{T}_{b+1}\times\mathbb{Z}$ and $\Gamma_{b}:=\Gamma_{(1,n_1,n_2)}\times\textnormal{Aut}(\mathbb{Z})$. From Theorem 6.10, Proposition 7.35 and Theorem 7.37 of \cite{LP2016} one has the following lower bound for $p_u$:
\begin{equation}\label{eq:lower bound for p_u}
	p_u(\mathbb{T}_{b+1}\times\mathbb{Z})\geq \frac{1}{\textnormal{cogr}(\mathbb{T}_{b+1}\times\mathbb{Z})}=\frac{1}{\sqrt{b}+1+\sqrt{2\sqrt{b}-1}}. 
\end{equation}

Note $p_h(\mathbb{T}_{b+1}\times\mathbb{Z},\Gamma_b)\leq p_h(\mathbb{T}_{b+1},\Gamma_{(1,n_1,n_2)})$. Simple calculation shows that when $n_1\geq2,n_2\geq2$ and $b$ large enough, the value of $ p_h(\mathbb{T}_{b+1},\Gamma_{(1,n_1,n_2)})$ (Proposition \ref{prop:exact value of p_h}) is strictly less than the above lower bound of $p_u$, whence such graphs are explicit examples exhibiting
$p_c(\mathbb{T}_{b+1}\times\mathbb{Z})<p_h(\mathbb{T}_{b+1}\times\mathbb{Z},\Gamma_b)<p_u(\mathbb{T}_{b+1}\times\mathbb{Z})<1$. However here $\Gamma_b$ is not the whole automorphism group of $\mathbb{T}_{b+1}\times\mathbb{Z}$.

\subsection{Examples 3}
The following family of examples are motivated by \cite{HPS1999}.
Recall for a quasi-transitive graph $G$, $p_h:=p_h(G,\textnormal{Aut}(G))$. 

\begin{definition}[Definition 1.3 in \cite{Sa1960}]
	A graph $G$ is called \textbf{prime} w.r.t. Cartesian product if $G$ is non-trivial (not the graph $U$ with a single vertex and no edge) and if $G\cong Y\times Z$ then either $Y\cong U$ or $Z\cong U$, where $A\cong B$ means that graph $A$ is isomorphic to $B$. Two distinct graphs $G,G'$ are called \textbf{relatively prime} if $G\cong X\times Z$ and $G'\cong Y\times Z$ implies that $Z\cong U$. 
\end{definition}

Now fix $G_0$ to be a  nonunimodular transitive graph and that is relatively prime to regular trees (for example, $G_0$ can be the grand-parent graph). Then for any regular tree $\mathbb{T}_k$ by Corollary 3.2 of \cite{Sa1960}   
$\textnormal{Aut}(G_0\times \mathbb{T}_k)=\textnormal{Aut}(G_0)\times\textnormal{Aut}(\mathbb{T}_k)$, whence it is nonunimodular. 

For $k$ large enough one has $p_h(G_0\times \mathbb{T}_k)<p_u(G_0\times \mathbb{T}_k)$ (see the inequality (4.9.2) on page 87 of \cite{HPS1999}). By Hutchcroft \cite{Tom2017} and the fact that $\textnormal{Aut}(G_0\times \mathbb{T}_k)$ is nonunimodular and transitive, one has $p_c(G_0\times \mathbb{T}_k)<p_h(G_0\times \mathbb{T}_k)$, whence we get another family of graphs exhibiting $p_c<p_h<p_u<1$. 
\\

Last but not least, Theorem \ref{thm:heavy clusters are indis} and Theorem \ref{thm:connectivity decay} are only proved for Bernoulli percolation. However,  corresponding theorems in \cite{LS1999a} are proved to hold for general insertion tolerant percolation process.

Remark \ref{rem:remark after prop heavy transient} implies for insertion-and-deletion tolerant percolation process the following weaker conclusion hold:  there exists some heavy cluster that is transient for the ``square-root biased" random walk. Notice in the proof  of Theorem \ref{thm:heavy clusters are indis} we need that with positive probability the cluster $C(\rho)$ is heavy, transient for  the ``square-root biased" random walk and has some pivotal edges. However the weak conclusion and Lemma  \ref{lem:pivotal edge} do not guarantee the existence of a pivotal edge for $C(\rho)$.

In Tim\'{a}r's proof of Proposition \ref{prop:finite levels},  deletion-tolerance was used in the proof of Lemma 5.2 and Lemma 5.3 in \cite{Timar2006}. Lemma 5.2 can be extended to percolation processes with just insertion-tolerance property but we do not know whether Lemma 5.3 can be extended to such percolation processes. 

\begin{question}
	Does Theorem \ref{thm:heavy clusters are indis} hold if one just assume $\Gamma$-invariance and insertion-and-deletion tolerance? What if just $\Gamma$-invariance and insertion-tolerance?
\end{question}

\section*{Acknowledgements} We thank Russell Lyons for many helpful discussions, detailed comments on a first draft and financial support by the National Science
Foundation under grant DMS-1612363. We also thank \'{A}d\'{a}m Tim\'{a}r for comments on the manuscript. We thank the referee for the  detailed and  helpful comments which improve the quality of writing a lot.  

\bibliographystyle{imsart-nameyear}


\end{document}